\setlist{itemsep=1pt,topsep=4pt}
\newcounter{cstcount}                     
\newcommand{\newC}{\refstepcounter{cstcount} \ensuremath{C_{\thecstcount}}}
\newcommand{\oldC}[1]{\ensuremath{C_{\ref*{#1}}}}
\newcommand{\A}{\mathbb{A}}
\newcommand{\Q}{\mathbb{Q}}
\newcommand{\R}{\mathbb{R}}
\newcommand{\C}{\mathbb{C}}
\newcommand{\Z}{\mathbb{Z}}
\newcommand{\N}{\mathbb{N}}
\newcommand{\V}{\mathcal{V}}
\newcommand{\Proj}{\mathbb{P}}
\newcommand{\Half}{\mathbb{H}}
\newcommand{\Qbar}{\overline{\Q}}
\newcommand{\Zhat}{\widehat{\Z}}
\newcommand{\Order}{\mathcal{O}}
\newcommand{\eps}{\varepsilon}
\newcommand{\Lcal}{\mathcal{L}}
\newcommand{\Mcal}{\mathcal{M}}
\newcommand{\Scal}{\mathcal{S}}
\newcommand{\Tcal}{\mathcal{T}}
\newcommand{\CC}{\mathcal{C}}
\newcommand{\Frac}{\mathcal{F}}
\newcommand{\linv}{\tfrac{1}{\ell}}
\newcommand{\U}{\mathcal{U}}
\newcommand{\st}{\ |\ }
\newcommand{\defby}{\ :\ }
\newcommand{\from}{\colon}
\newcommand{\embed}{\hookrightarrow}
\newcommand{\mat}[4]{\left(\begin{matrix}#1&#2\\#3&#4\end{matrix}\right)}
\newcommand{\tmat}[4]{\left(\begin{smallmatrix}#1&#2\\#3&#4\end{smallmatrix}\right)}
\newcommand{\floor}[1]{\left\lfloor #1 \right\rfloor}
\newcommand{\Zint}[1]{\llbracket #1\rrbracket}
\newcommand{\abs}[1]{\left|#1\right|}
\newcommand{\act}[2]{#1\cdot#2}
\newcommand{\bpar}[1]{\bigl(#1\bigr)}
\newcommand{\dual}{\vee}
\DeclareMathOperator{\Aut}{Aut}
\DeclareMathOperator{\End}{End}
\DeclareMathOperator{\GL}{GL}
\DeclareMathOperator{\Sp}{Sp}
\DeclareMathOperator{\GSp}{GSp}
\DeclareMathOperator{\SL}{SL}
\DeclareMathOperator{\Sh}{Sh}
\DeclareMathOperator{\Tr}{Tr}
\DeclareMathOperator{\wt}{wt}
\DeclareMathOperator{\Res}{Res}
\DeclareMathOperator{\h}{\mathit{h}}
\DeclareMathOperator{\Hbar}{\overline{\mathit{h}}}
\DeclareMathOperator{\GC}{GC}
\DeclareMathOperator{\SGC}{SGC}
\DeclareMathOperator{\Heckedeg}{\mathit{d}}
\DeclareMathOperator{\isogdeg}{\mathit{l}}
\newtheorem{thm}{Theorem}[section]
\newtheorem{prop}[thm]{Proposition}
\newtheorem{lem}[thm]{Lemma}
\newtheorem{cor}[thm]{Corollary}
\newcommand{\claimemph}[1]{\vspace{2mm} \noindent \emph{#1}}
\title{Degree and height estimates for modular equations\\ on PEL
  Shimura varieties}
\shorttitle{Degree and height estimates for modular equations}
\author{Jean Kieffer}
\begin{document}

\maketitle

\begin{abstract}
  We define modular equations in the setting of PEL Shimura varieties
  as equations describing Hecke correspondences, and prove upper
  bounds on their degrees and heights. This extends known results
  about elliptic modular polynomials, and implies complexity bounds
  for number-theoretic algorithms using these modular equations. In
  particular, we obtain tight degree bounds for modular equations of
  Siegel and Hilbert type for abelian surfaces.
\end{abstract}


\section{Introduction}
\label{sec:intro}

Modular equations encode the presence of isogenies between polarized
abelian varieties. An example is given by the elliptic modular
polynomial~$\Phi_\ell$, where~$\ell$ is a prime: this bivariate
polynomial vanishes on the $j$-invariants of $\ell$-isogenous elliptic
curves \cite[§11.C]{cox_PrimesFormNy2013}, and can be used to detect
and compute such isogenies
\cite{elkies_EllipticModularCurves1998}. Elliptic modular polynomials
are used for instance in the SEA algorithm to count points on elliptic
curves over finite fields~\cite{schoof_CountingPointsElliptic1995},
and in multi-modular methods to compute class polynomials of imaginary
quadratic fields~\cite{sutherland_ComputingHilbertClass2011}; being
able to compute isogenies also has applications in cryptography.
Analogues of~$\Phi_\ell$ for principally polarized abelian surfaces,
called Siegel and Hilbert modular equations in dimension~$2$, have
recently been defined and
computed~\cite{milio_QuasilinearTimeAlgorithm2015,milio_ModularPolynomialsHilbert2020,martindale_HilbertModularPolynomials2020},
and are of similar interest.

In the first part of this paper, we define modular equations in the
general setting of PEL Shimura varieties of finite level; these
varieties are moduli spaces for abelian varieties with polarization,
endomorphisms, and level structure, hence the name.  Choose connected
components~$\Scal$ and~$\Tcal$ of such a Shimura variety of
dimension~$n\geq 1$; they have a canonical model over a certain number
field~$L$. Choose coordinates on~$\Scal$ and~$\Tcal$ that are defined
over~$L$.  Let~$H_\delta$ be an absolutely irreducible Hecke
correspondence defined by an adelic element~$\delta$ of the underlying
reductive group, and let~$\Heckedeg(\delta)$ be the degree
of~$H_\delta$. In the modular interpretation, $H_\delta$ parametrizes
isogenies of a certain degree~$\isogdeg(\delta)$ between abelian
varieties with PEL structure.  Then the \emph{modular equations of
  level~$\delta$} are a family of~$n+1$ univariate
polynomials~$(\Psi_{\delta,m})_{1\leq m\leq n+1}$ with coefficients in
the function field~$L(\Scal)$ of~$\Scal$, of degree at
most~$d(\delta)$, describing~$H_\delta$ on~$\Scal\times \Tcal$. This
definition includes all the examples of modular polynomials cited
above, and provides a unified context to study them.

For each~$1\leq m\leq n+1$, the coefficients of~$\Psi_{\delta,m}$ can
be seen as multivariate rational fractions with coefficients in~$L$.
From an algorithmic point of view, two quantities are of interest:
first, the total degree of these fractions; and second, their height,
which measures the size of their coefficients. For instance, if
$\Frac\in \Q(Y_1,\ldots,Y_n)$, write $\Frac=P/Q$ where
$P,Q\in \Z[Y_1,\ldots,Y_n]$ are coprime; then the height~$\h(\Frac)$
of~$\Frac$ is defined as the maximum of~$\log\abs{c}$, where~$c$ runs
through the nonzero coefficients of~$P$ and~$Q$.

Our main result gives upper bounds on the degrees and heights of the
coefficients of modular equations on a given PEL Shimura variety in
terms of~$\Heckedeg(\delta)$ and~$\isogdeg(\delta)$. This provides complexity
bounds for algorithms involving these modular equations.

\begin{thm}
  \label{thm:main}
  Let~$\Scal$ and~$\Tcal$ be connected components of a simple PEL
  Shimura variety of type~(A) or~(C) of finite level and
  dimension~$n\geq 1$, with underlying reductive group~$G$. Let~$L$ be
  the field of definition of~$\Scal$ and~$\Tcal$, and choose
  coordinates on~$\Scal$ and~$\Tcal$ that are defined over~$L$. Then
  there exist constants~$\newC\label{cst:main-degree}$
  and~$\newC\label{cst:main-height}$ such that the following holds.
  Let~$H_\delta$ be an absolutely irreducible Hecke correspondence
  on~$\Scal\times \Tcal$ defined by an adelic element~$\delta$ of~$G$;
  let~$\Heckedeg(\delta)$ be the degree of~$H_\delta$, and
  let~$\isogdeg(\delta)$ be the degree of the isogenies described
  by~$H_\delta$ in the modular interpretation.  Let~$\Frac$ be a
  multivariate rational fraction over~$L$ occuring as a coefficient of
  one of the modular equations~$\Psi_{\delta,m}$ for
  $1\leq m\leq n+1$. Then
  \begin{enumerate}
  \item \label{item:degree-bound} The total degree of~$\Frac$ is
    bounded above by~$\oldC{cst:main-degree} \Heckedeg(\delta)$.
  \item \label{item:height-bound} The height of~$\Frac$ is bounded
    above
    by~$\oldC{cst:main-height} \Heckedeg(\delta) \max\{1, \log \isogdeg(\delta)\}$.
  \end{enumerate}
\end{thm}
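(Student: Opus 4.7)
The plan is to reduce both estimates to bounds on the Hecke correspondence $H_\delta$ itself, viewed as an algebraic (resp.\ arithmetic) cycle on a compactification of $\Scal\times\Tcal$. I would fix a toroidal compactification equipped with an ample automorphic line bundle $\Lcal$ (the Hodge bundle of the PEL moduli problem), chosen so that the coordinates on $\Scal$ and $\Tcal$ are ratios of global sections of a fixed power $\Lcal^{\otimes k_0}$. Under the projection $\pi\from H_\delta\to\Scal$, whose degree is $\Heckedeg(\delta)$, the polynomial $\Psi_{\delta,m}$ is, up to normalisation, the minimal polynomial of the $m$-th coordinate on $\Tcal$ over $L(\Scal)$, so each of its coefficients is an elementary symmetric function of this coordinate along the fibres of $\pi$. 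Consequently the coefficients can be written as ratios of sections of $\Lcal^{\otimes N}$ on $\Scal$ with $N$ of order $\Heckedeg(\delta)$.

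For part~(\ref{item:degree-bound}) I would bound the intersection number $(\Lcal^{n}\cdot H_\delta)$ by $\oldC{cst:main-degree}\,\Heckedeg(\delta)$: decompose $H_\delta$ into the $\Heckedeg(\delta)$ graphs of individual isogenies that comprise it and note that Hecke translation intertwines Hodge bundles up to a twist of bounded degree, so each graph has $\Lcal$-degree bounded by a constant depending only on the Shimura datum. Via the preceding paragraph this translates into the claimed total-degree bound on $\Frac$. For part~(\ref{item:height-bound}) I would move to Arakelov geometry: equip $\Lcal$ with the Petersson metric on a Faltings--Chai-style integral model and use an arithmetic Bezout inequality (in the spirit of Bost--Gillet--Soul\'e) to bound the heights of the coefficients of $\Psi_{\delta,m}$ by the arithmetic degree $\widehat{\deg}_{\overline{\Lcal}}(H_\delta)$. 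The geometric part is linear in $\Heckedeg(\delta)$ as above; the logarithmic factor enters through the modular interpretation of $H_\delta$ as a family of isogenies of degree $\isogdeg(\delta)$ between PEL abelian varieties, whose Faltings heights (equivalently, the Petersson norms of the associated unit sections) differ by $O(\log\isogdeg(\delta))$, so that averaging this comparison along the fibres of $\pi$ produces the stated factor.

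The principal obstacle I anticipate is the arithmetic step: controlling $\widehat{\deg}_{\overline{\Lcal}}(H_\delta)$ requires uniform estimates for the interaction of $H_\delta$ with both the boundary of the toroidal compactification and the bad-reduction primes of the integral model, and both must be kept independent of~$\delta$. I would address the boundary via the Fourier--Jacobi expansions of the coordinate sections near the cusps, and bound the bad-reduction contributions by observing that the set of relevant primes depends only logarithmically on $\isogdeg(\delta)$. A secondary task is extracting a bound on the height of each individual fraction $\Frac$ from the bound on $\widehat{\deg}(H_\delta)$; this should follow from a Chow-form or resultant-type argument once the coordinates are realised as sections of $\Lcal^{\otimes k_0}$, and it is where the constant $\oldC{cst:main-height}$ absorbs all quantities depending on the Shimura datum but not on $\delta$.
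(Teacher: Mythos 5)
For part~(\ref{item:degree-bound}) your outline is, underneath the intersection-theoretic language, essentially the paper's argument: produce a common denominator for the coefficients of~$\Psi_{\delta,m}$ of weight (equivalently, $\Lcal$-degree) proportional to~$\Heckedeg(\delta)$, then convert bounded weight into bounded total degree in the coordinates. The conversion step, however, is asserted rather than proved: the coordinates $j_1,\ldots,j_{n+1}$ are arbitrary generators of~$L(\Scal)^\Sigma$ defined over~$L$, not necessarily ratios of sections of one fixed power~$\Lcal^{\otimes k_0}$ in a projectively normal embedding, and passing from ``quotient of sections of $\Lcal^{\otimes N}$'' to ``rational fraction of total degree $O(N)$ in $j_1,\ldots,j_{n+1}$'' is precisely the content of Propositions~\ref{prop:rewrite} and~\ref{prop:canonical} (the geometric complexity machinery). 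This gap is fillable, but it is where the constant actually comes from.

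Part~(\ref{item:height-bound}) is where you take a genuinely different route (arithmetic intersection theory on an integral toroidal model versus the paper's evaluation--interpolation method), and as written it has real gaps exactly at the points you flag without resolving them. First, an arithmetic B\'ezout inequality bounds the height of the cycle $\overline{H_\delta}$ (its Chow form), but the polynomials $\Psi_{\delta,m}$ for $m\geq 2$ are not defining equations of that cycle in $\Proj\times\Proj$: they are built from the tower of function-field extensions attached to the subgroups $K_i$, in arbitrary chosen coordinates, and extracting height bounds on their coefficients from $\widehat{\deg}_{\overline{\Lcal}}(\overline{H_\delta})$ is not supplied. Second, the vertical contributions at primes dividing $\isogdeg(\delta)$ are not controlled by observing that there are only $O(\log\isogdeg(\delta))$ such primes; one needs a bound on the local intersection multiplicities at each such prime, uniform in~$\delta$, and that is the heart of the matter (already for $\Phi_\ell$ the fibre at $\ell$ contributes on the order of $\ell\log\ell$). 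Third, uniform-in-$\delta$ control of the boundary of a toroidal compactification of a general PEL integral model (and the existence of suitable models at bad primes) is a substantial technical input that a mention of Fourier--Jacobi expansions does not replace. The paper sidesteps integral models entirely: it bounds the heights of the \emph{evaluations} $\Psi_{\delta,m}(j_1,\ldots,j_n)$ at many integer tuples in~$\V_\delta$ using Faltings's isogeny lemma and the Theta/Faltings comparison (Corollary~\ref{cor:j-isog}, Proposition~\ref{prop:height-modeq-evaluation}), and then recovers the height of~$\Frac$ from finitely many such evaluations via the interpolation results of \cite{kieffer_UpperBoundsHeights2020} (Propositions~\ref{prop:main-poly} and~\ref{prop:main-frac}), the degree bound of part~(\ref{item:degree-bound}) dictating how many evaluation points are needed. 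Until the missing local and boundary estimates are proved, your sketch is a plausible program rather than a proof.
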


This result generalizes known bounds on the size of the elliptic
modular polynomial~$\Phi_\ell$, which has degree~$\ell+1$ in both
variables.  We have $\h(\Phi_\ell) \sim 6 \ell \log \ell$ as~$\ell$
tends to infinity
\cite{cohen_CoefficientsTransformationPolynomials1984}, and explicit
bounds can be given
\cite{broker_ExplicitHeightBound2010}.
Since~$\Heckedeg(\delta) = \ell+1$ and~$\isogdeg(\delta)=\ell$ in this
case, Theorem~\ref{thm:main} seems optimal up to the value of the
constants.

In the case of Siegel and Hilbert modular equations in dimension~$2$,
this result is new, and we can provide explicit values for the
constants~$\oldC{cst:main-degree}$ and~$\oldC{cst:main-height}$. In
particular, the degree bounds that we obtain match exactly with
experimental data.

The strategy to prove part~\ref{item:degree-bound} of
Theorem~\ref{thm:main} is to exhibit a particular modular form that
behaves as the denominator of~$\Psi_{\delta,m}$, and to control its
weight; then, we show that rewriting quotients of modular forms in
terms of the chosen coordinates transforms bounded weights into bounded
degrees.  The proof of part~\ref{item:height-bound} is inspired by
previous works
on~$\Phi_\ell$~\cite{pazuki_ModularInvariantsIsogenies2019}. We prove
height bounds on \emph{evaluations} of modular equations at certain
points using well-known results on the Faltings height of isogenous
abelian
varieties~\cite{faltings_EndlichkeitssaetzeFuerAbelsche1983}. Then we
use a general tight relation between the height of a rational fraction
over a number field and the height of its evaluations at sufficiently
many points, proved by the author in a separate
paper~\cite{kieffer_UpperBoundsHeights2020}.

This paper is organized as follows. In Section~\ref{sec:pel}, we
recall the necessary background on PEL Shimura varieties. In
Section~\ref{sec:modeq}, we define the modular equations associated
with a choice of PEL setting and absolutely irreducible Hecke
correspondence, and explain how we recover the Siegel and Hilbert
modular equations in dimension~$2$ as special
cases. Sections~\ref{sec:degree-bound} and~\ref{sec:height-bound} are
devoted to the proof of the degree and height bounds respectively.

\section{Background on PEL Shimura varieties}
\label{sec:pel}


Our presentation is based on Milne's expository
notes~\cite{milne_IntroductionShimuraVarieties2005}, which serve as a
general reference for this section. These notes are themselves based
on Deligne's reformulation of Shimura's
works~\cite{deligne_TravauxShimura1970}. We use the following
notation: if~$G$ is a connected reductive algebraic group over~$\Q$,
then
\begin{itemize}
\item $G^{\mathrm{der}}$ is the derived group of~$G$,
\item $Z$ is the center of~$G$,
\item  $G^{\mathrm{ad}} = G/Z$ is the adjoint group of~$G$,
\item $T = G/G^\mathrm{der}$ is the largest abelian quotient of~$G$,
\item $\nu\from G\to T$ is the natural quotient map,
\item $G^{\mathrm{ad}}(\R)_+$ is the connected component of 1 in
$G^{\mathrm{ad}}(\R)$ for the real topology,
\item $G(\R)_+$ is the
preimage of~$G^{\mathrm{ad}}(\R)_+$ in~$G(\R)$, and finally
\item  $G(\Q)_+ = G(\Q)\cap G(\R)_+$. 
\end{itemize}
We write~$\A_f$ for the ring of finite adeles of~$\Q$.

\subsection{Simple PEL Shimura varieties of type~(A) or~(C)}
\label{sub:pel}


\paragraph{PEL data.}
Let~$(B,*)$ be a finite-dimensional simple~$\Q$-algebra with positive
involution. The center~$F$ of~$B$ is a number field; let~$F_0\subset F$ be the
subfield of invariants under~$*$. For simplicity, we make the
technical assumption that~$B$ is either of type~(A) or~(C)
\cite[Prop.~8.3]{milne_IntroductionShimuraVarieties2005}: this means
that for every embedding~$\theta$ of~$F_0$ in an algebraic
closure~$\Qbar$ of~$\Q$, the algebra with
positive involution~$(B\otimes_{F_0,\theta}\Qbar, *)$ is isomorphic to a
product of factors of the form, respectively,
\begin{itemize}
\item[(A)] $M_n(\Qbar)\times M_n(\Qbar)$ with $(a,b)^* = (b^t, a^t)$,
  or
\item[(C)] $M_n(\Qbar)$ with $a^* = a^t$.
\end{itemize}


Let~$(V,\psi)$ be a faithful symplectic~$(B,*)$-module. This means
that~$V$ is a finite-dimensional $\Q$-vector space equipped with a
faithful $B$-module structure and a nondegenerate alternating
$\Q$-bilinear form~$\psi$ such that for all~$b\in B$ and for
all~$u,v\in V$,
\begin{displaymath}
  \psi(b^* u, v) = \psi(u, bv).
\end{displaymath}

Let~$\GL_B(V)$ denote the group of automorphisms of~$V$ respecting the
action of~$B$, and let~$G$ be its reduced algebraic subgroup defined by
\begin{displaymath}
  \begin{aligned}
    G(\Q) &= \bigl\{g\in \GL_B(V) \st \psi(gx, gy) = \psi(\mu(g)x, y)
    \text{ for some } \mu(g)\in F_0^\times \bigr\}.
  \end{aligned}
\end{displaymath}
The group~$G$ is connected and reductive, and its derived group is
$G^\mathrm{der} = \ker(\mu) \cap \ker(\det)$
\cite[Prop.~8.7]{milne_IntroductionShimuraVarieties2005}.  We warn the
reader that our~$G$ is denoted by~$G_1$ in
\cite[§8 of the 2017 version]{milne_IntroductionShimuraVarieties2005}. In Milne's
terminology, our~$G$ will define a Shimura variety (so that the results
of~\cite[§5]{milne_IntroductionShimuraVarieties2005} apply), but not
strictly speaking a PEL Shimura variety. This choice of reductive
group will allow us to consider more Hecke correspondences later on.


Let~$x$ be a complex structure on~$V(\R)$, meaning an endomorphism
of~$V(\R)$ such that $x^2 =\nolinebreak -1$. We say that~$x$ is
\emph{positive for~$\psi$} if it commutes with the action of~$B$ and
if the bilinear form~$(u,v)\mapsto \psi\bigl(u, x(v)\bigr)$ on~$V(\R)$
is symmetric and positive definite. In particular, $x\in G(\R)$
and~$\mu(x)=1$. Such a complex structure~$x_0$ exists
\cite[Prop.~8.14]{milne_IntroductionShimuraVarieties2005}.
Define~$X_+$ to be the orbit of~$x_0$ under the action of~$G(\R)_+$ by
conjugation; the space~$X_+$ is a hermitian symmetric domain
\cite[Cor.~5.8]{milne_IntroductionShimuraVarieties2005}. We call the
tupe $(B, *, V, \psi, G, X_+)$ a \emph{simple PEL Shimura datum of
  type~(A) or~(C)}, or simply a \emph{PEL datum}. To simplify
notations, we abbreviate PEL data as pairs~$(G,X_+)$, the underlying
data~$(V,\psi)$ and~$(B,*)$ being implicit.


\paragraph{PEL Shimura varieties.}
Let $(G,X_+)$ be a PEL datum as above, let~$K$ be a compact open
subgroup of~$G(\A_f)$, and let~$K_\infty$ be the stabilizer of~$x_0$
in~$G(\R)_+$. The \emph{PEL Shimura variety} associated with~$(G,X_+)$
of level~$K$ is the double quotient
\begin{equation}
  \label{eq:Sh-CC}
  \begin{aligned}
    \Sh_K(G,X_+)(\C) &= G(\Q)_+\backslash (X_+\times G(\A_f)) /K \\
    &= G(\Q)_+ \backslash (G(\R)_+ \times G(\A_f)) / K_\infty\times K.
  \end{aligned}
\end{equation}
Actually, this quotient will be the set of~$\C$-points of the Shimura
variety, hence the notation.  In the first line of~\eqref{eq:Sh-CC},
the group~$G(\Q)_+$ acts on both~$X_+$ and~$G(\A_f)$ by conjugation
and left multiplication respectively, and~$K$ acts on $G(\A_f)$ by
right multiplication. When the context is clear, we omit~$(G,X_+)$
from the notation. The set~$\Sh_K(\C)$ is given the quotient topology
obtained from the real topology on~$G(\R)_+$ and the adelic topology
on~$G(\A_f)$.


In order to describe~$\Sh_K(\C)$ more explicitly, we study its
connected components.  The projection to the second factor induces a
map with connected fibers from~$\Sh_K(\C)$ to the double
quotient~$G(\Q)_+\backslash G(\A_f) / K$, which is finite
\cite[Lem.~5.12]{milne_IntroductionShimuraVarieties2005}. Let~$\CC$ be
a set of representatives in~$G(\A_f)$ for this double quotient. The
connected component~$\Scal_c$ of~$\Sh_K(\C)$ indexed by~$c\in \CC$ can
be identified with~$\Gamma_c\backslash X_+$, where
$\Gamma_c = G(\Q)_+\cap\, cKc^{-1}$ is an arithmetic subgroup of
$\Aut(X_+)$
\cite[Lem.~5.13]{milne_IntroductionShimuraVarieties2005}. Thus, the
Shimura variety~$\Sh_K(\C)$ has a natural structure of a complex
analytic space, and is an algebraic variety by the theorem of Baily
and Borel \cite[Thm.~3.12]{milne_IntroductionShimuraVarieties2005}.


Since~$G^\mathrm{der}$ is simply connected, by \cite[Thm.~5.17 and
Lem.~5.20]{milne_IntroductionShimuraVarieties2005} (the assumption
that~$K$ is sufficiently small is not actually needed there), the
map~$\nu$ induces an isomorphism
\begin{displaymath}
  G(\Q)_+ \backslash G(\A_f)/K \simeq \nu(G(\Q)_+) \backslash T(\A_f) / \nu(K).
\end{displaymath}
Therefore the set of connected components of~$\Sh_K(\C)$ is a finite
abelian group. Moreover, each connected component is itself a Shimura
variety with underlying group~$G^\mathrm{der}$
\cite[Rem.~5.23]{milne_IntroductionShimuraVarieties2005}.


A fundamental theorem states that~$\Sh_K(G,X_+)$ exists as an
algebraic variety defined over the \emph{reflex field} $E(G,X_+)$,
which is a number field contained in~$\C$, depending only on the PEL
datum~\cite[§12-14]{milne_IntroductionShimuraVarieties2005}. The field
of definition of the individual connected components of~$\Sh_K(\C)$
depends on~$K$, and is a finite abelian extension of~$E(G,X_+)$.

\subsection{The modular interpretation}
\label{sub:modular-int}

Our motivation in constructing PEL Shimura varieties is to obtain
moduli spaces of complex abelian varieties with polarization,
endomorphism, and level structures. This \emph{modular interpretation}
of PEL Shimura varieties is usually formulated in terms of isogeny
classes of abelian varieties
\cite[Thm.~8.17]{milne_IntroductionShimuraVarieties2005}. In order to
obtain a modular interpretation in terms of \emph{isomorphism} classes
of abelian varieties in the spirit
of~\cite[§2.6.2]{carayol_MauvaiseReductionCourbes1986}, we fix
\begin{itemize}
\item a PEL datum~$(G,X_+)$,
\item a lattice~$\Lambda_0\subset V$,
\item a compact open subgroup~$K\subset G(\A_f)$ which stabilizes the
  lattice~$\widehat{\Lambda}_0 = \Lambda_0\otimes\Zhat\subset
  V(\A_f)$, and
\item a set~$\CC\subset G(\A_f)$ of representatives for the finite
  double quotient~$G(\Q)_+\backslash G(\A_f) / K$.
\end{itemize}

By definition, a lattice in~$V$ is a subgroup of~$V(\Q)$ generated by
a $\Q$-basis of~$V$, hence a free~$\Z$-module of rank~$\dim V$. If~$p$
is a prime number, then a lattice in~$V(\Q_p)$ is a subgroup of the
form~$\bigoplus_{i\in I} \Z_p e_i$ where~$(e_i)_{i\in I}$ is a
$\Q_p$-basis of~$V(\Q_p)$. Finally, a lattice in~$V(\A_f)$ is a
product of lattices in~$V(\Q_p)$ for each~$p$ that are equal
to~$V(\Z_p)$ for all~$p$ but finitely many. Recall that the
local-global principle for lattices holds: the map
$\Lambda\mapsto \widehat{\Lambda} = \Lambda\otimes\Zhat$ is a
bijection between lattices in~$V$ and lattices in~$V(\A_f)$, and its
inverse is intersection with~$V(\Q)$. The assumption that~$K$
stabilizes~$\widehat{\Lambda}_0$ does not imply a loss of generality,
because every compact open subgroup of~$G(\A_f)$ stabilizes some
lattice in~$V(\A_f)$.

To complete the setup, let~$\Order$ be the largest order in~$B$
stabilizing~$\Lambda_0$.  We keep the notation of~§\ref{sub:pel}: for
every~$c\in \CC$, we write~$\Gamma_c = G(\Q)_+\cap cKc^{-1}$, and we
denote by~$\Scal_c = \Gamma_c\backslash X_+$ the connected component
of~$\Sh_K(\C)$ associated with~$c$.


We define a \emph{polarized lattice} to be a pair~$(\Lambda,\phi)$
where~$\Lambda$ is a free~$\Z$-module of finite rank
and~$\phi\from \Lambda\times\Lambda\to\Z$ is a nondegenerate
alternating form. Given a polarized lattice~$(\Lambda,\phi)$, we can
extend~$\phi$ to the $\Q$-vector space~$\Lambda\otimes\Q$, and we define
\begin{displaymath}
  \Lambda^\perp = \{v\in \Lambda\otimes\Q\st \forall w\in \Lambda,\ \phi(v,w)\in\Z\}.
\end{displaymath}
Then~$\Lambda^\perp/\Lambda$ is a finite abelian group called the
\emph{polarization type} of~$(\Lambda,\phi)$. We say that~$\phi$ is a
\emph{principal polarization} on~$\Lambda$ if~$\Lambda^\perp=\Lambda$.


\paragraph{A modular interpretation in terms of lattices.} Using the
data above, we define a standard polarized lattice for every connected
component of~$\Sh_K(\C)$ as follows.

\begin{defn}
  \label{def:standard-lattice}
  For each~$c\in \CC$, we define
  \begin{displaymath}
    \widehat{\Lambda}_c = c(\widehat{\Lambda}_0) \quad\text{and}\quad
    \Lambda_c = \widehat{\Lambda}_c\cap V(\Q).
  \end{displaymath}
  The action of~$c$, or any other element of~$G(\A_f)$, on adelic
  lattices is easily defined locally at each prime.  Since~$c$
  respects the action of~$B$ on~$V(\A_f)$, the order~$\Order$ is again
  the stabilizer of~$\widehat{\Lambda}_c$, and thus
  of~$\Lambda_c$. Let $\lambda_c\in\Q_+^\times$ be such that the
  nondegenerate alternating form~$\psi_c = \lambda_c\psi$ satisfies
  $\psi_c(\Lambda_c\times\Lambda_c)=\Z$. We call $(\Lambda_c,\psi_c)$
  with its structure of~$\Order$-module the \emph{standard polarized
    lattice} associated with~$(\Lambda_0, c)$.
\end{defn}


Choose~$c\in \CC$, and let $(\Lambda_c,\psi_c)$ be the standard
polarized lattice associated with~$(\Lambda_0,c)$. We consider tuples
$(\Lambda,x,\iota,\phi,\eta K)$ where
\begin{itemize}
\item $\Lambda$ is a free~$\Z$-module of rank\, $\dim V$,
\item $x\in \End(\Lambda\otimes\R)$ is a complex structure
  on~$\Lambda\otimes\R$,
\item $\iota$ is an embedding~$\Order\embed \End_\Z(\Lambda)$,
\item $\phi\from\Lambda\times\Lambda\to\Z$ is a nondegenerate
  alternating $\Z$-bilinear form on~$\Lambda$,
\item $\eta K$ is a $K$-orbit of $\Zhat$-linear isomorphisms of
  $\Order$-modules~$\widehat{\Lambda}_0 \to \Lambda\otimes\Zhat$,
\end{itemize}
satisfying the following condition of compatibility
with~$(\Lambda_c,\psi_c)$:
\begin{itemize}
\item[($\star$)] There exists an isomorphism of $\Order$-modules
  $a\from \Lambda \to \Lambda_c$, carrying~$\eta K$ to~$c K$ and~$x$
  to an element of~$X_+$, such that
  \begin{displaymath}
    \exists \zeta \in \mu(\Gamma_c),\ \forall u, v\in\Lambda,\ \phi(u,v) = \psi_c\bigl(\zeta a(u),
    a(v)\bigr).
  \end{displaymath}
\end{itemize}
For short, we will call such a tuple a \emph{lattice with PEL
  structure defined by $(\Lambda_0,c)$}, or simply a \emph{lattice
  with PEL structure} when the dependency on~$(\Lambda_0,c)$ is
understood.


An \emph{isomorphism} between lattices with PEL structure
$(\Lambda, x,\iota,\phi,\eta K)$
and~$(\Lambda', x', \iota', \phi', \eta' K)$ is an isomorphism of
$\Order$-modules $f\from \Lambda\to\Lambda'$ that sends~$x$ to~$x'$,
sends~$\eta K$ to~$\eta' K$, and such that
$\phi(u,v) = \phi'\bigl(\zeta f(u), f(v)\bigr)$ for
some~$\zeta\in\mu(\Gamma_c)$.

For every lattice with PEL structure $(\Lambda,x,\iota,\phi,\eta K)$,
the compatibility condition~($\star$) implies in particular that the
complex structure~$x$ is positive for~$\phi$, the adjunction
involution defined by~$\phi$ coincides with~$*$ on~$B$, the action
of~$B$ on~$\Lambda\otimes\Q$ leaves the complex structure~$x$
invariant, and the polarized lattices~$(\Lambda,\phi)$
and~$(\Lambda_c,\psi_c)$ have the same polarization type.


\begin{prop}
  \label{prop:modular-int-lattices} Let~$c\in \CC$, and
  let~$\mathcal{Z}_c$ be the set of isomorphism classes of lattices
  with PEL structure defined by~$(\Lambda_0,c)$. Then the map
  \begin{displaymath}
    \begin{matrix}
      \mathcal{Z}_c & \longrightarrow & \Scal_c \\
      (\Lambda,x,\iota,\phi,\eta K) &\longmapsto &[a x a^{-1},\, c]
      &\text{ where~$a$ is as in~$(\star)$}
    \end{matrix}
  \end{displaymath}
  is well-defined and bijective. The inverse map is
  \begin{displaymath}
    [x, c] \mapsto (\Lambda_c, x, \iota, \psi_c, cK).
  \end{displaymath}
   where~$\iota$ is the natural action of~$\Order$ on~$\Lambda_c$.
\end{prop}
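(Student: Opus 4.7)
The proof is essentially a bookkeeping exercise that unpacks the identification $\Sh_K(\C) \simeq \bigsqcup_c \Gamma_c \backslash X_+$ from §\ref{sub:pel} in terms of PEL tuples. I would split the argument into three parts: (i)~the forward map is well-defined on isomorphism classes of lattices with PEL structure, (ii)~the candidate inverse $[x,c]\mapsto(\Lambda_c, x, \iota, \psi_c, cK)$ lands in $\mathcal{Z}_c$ and is well-defined on $\Gamma_c$-orbits, and (iii)~both compositions are the identity.

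The main technical content is part~(i), namely that $[axa^{-1}, c]\in\Scal_c = \Gamma_c\backslash X_+$ is independent of the auxiliary choice of $a$ in $(\star)$. Given two such choices $a, a'$, I would set $g := a'\circ a^{-1}$, viewed as a $B$-linear $\Q$-automorphism of $V$ via extension of scalars. Applying the polarization clause of $(\star)$ to both $a$ and $a'$ exhibits $g$ as a similitude of $\psi_c$ with multiplier in $F_0^\times$, so $g \in G(\Q)$. Both $axa^{-1}$ and $a'xa'^{-1} = g(axa^{-1})g^{-1}$ lie in $X_+$, and since $X_+$ is a single connected component of the full symmetric space $X$ with stabilizer $G(\R)_+$ in $G(\R)$, this forces $g \in G(\R)_+$, hence $g \in G(\Q)_+$. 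Since $a$ and $a'$ both carry $\eta K$ to $cK$, we also have $g \in cKc^{-1}$, so $g \in \Gamma_c$ and the two classes $[axa^{-1}, c]$ agree. A parallel argument—replacing $a$ by $a \circ f^{-1}$ for an isomorphism $f$ of tuples—shows that the forward map descends to isomorphism classes.

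For part~(ii), all the clauses of the definition of $\mathcal{Z}_c$ are immediate from Definition~\ref{def:standard-lattice} applied to $(\Lambda_c, \psi_c)$, with $a = \mathrm{id}$ and $\zeta = 1$ verifying $(\star)$; if $\gamma \in \Gamma_c$ then $\gamma$ itself furnishes an isomorphism $(\Lambda_c, x, \iota, \psi_c, cK) \to (\Lambda_c, \gamma x \gamma^{-1}, \iota, \psi_c, cK)$ with scaling $\zeta = \mu(\gamma)^{-1} \in \mu(\Gamma_c)$. Part~(iii) is then immediate: the map $a^{-1}\from \Lambda_c \to \Lambda$ transports $(\Lambda_c, axa^{-1}, \iota, \psi_c, cK)$ back to the original tuple, and in the other direction $a = \mathrm{id}$ recovers $[x, c]$ directly. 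The one genuinely delicate step is the positivity $g \in G(\R)_+$ in part~(i); this rests crucially on $X_+$ being one distinguished connected component of $X$ rather than the full $G(\R)$-orbit of $x_0$, without which the forward map would only be well-defined modulo the larger group $G(\Q) \cap cKc^{-1}$ instead of $\Gamma_c$.
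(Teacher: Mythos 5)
Your proposal is correct, and it fills in exactly the routine verification that the paper omits (the paper's "proof" is a one-line reference to the analogous statement in Milne's notes, Prop.~6.3): the key points — that $a'a^{-1}$ is a $B$-linear similitude, lies in $cKc^{-1}$ via the level structures, and lies in $G(\R)_+$ because $X_+$ is a connected component of the full conjugacy class with stabilizer $G(\R)_+$ — are precisely the standard ingredients that argument rests on. In particular your identification of the positivity step $g\in G(\R)_+$ as the only delicate point, and the use of $\zeta=\mu(\gamma)^{-1}$ to absorb the similitude factor of $\gamma\in\Gamma_c$, match what the omitted proof requires.
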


\begin{proof}
  The proof is direct and omitted; the details are similar to
  \cite[Prop.~6.3]{milne_IntroductionShimuraVarieties2005}.
\end{proof}


\paragraph{A modular interpretation in terms of isomorphism classes of
  abelian varieties.}
Giving an abelian variety~$A$ over~$\C$ is the same as giving the
lattice~$\Lambda = H_1(A,\Z)$ and a complex structure on the universal
covering~$\Lambda\otimes\R$ of~$A$. Under this identification,
endomorphisms of~$A$ correspond to endomorphisms of $\Lambda$ that
respect the complex structure.  Moreover, giving a polarization on~$A$
is the same as giving a nondegenerate alternating
form~$\phi\from \Lambda\times\Lambda\to\Z$ such that the bilinear form
$(u,v)\mapsto \phi(u, iv)$ is symmetric and positive definite. The
\emph{polarization type} of~$A$ is the polarization type
of~$(\Lambda,\phi)$.

Recall that for every prime number~$p$, the Tate module~$T_p(A)$
is defined as the projective limit of the torsion subgroups~$A[p^n]$
as~$n$ tends to infinity:
\begin{displaymath}
  T_p(A) = \varprojlim A[p^n] = \varprojlim \Lambda/p^n\Lambda = \Lambda\otimes \Z_p.
\end{displaymath}
Therefore $\Lambda\otimes\Zhat$ is canonically isomorphic to the
global Tate module~$\widehat{T}(A)$ of~$A$, defined as
\begin{displaymath}
  \widehat{T}(A) = \prod_{p \text{ prime }} T_p(A).
\end{displaymath}

Fix~$c\in\CC$, and let~$(\Lambda_c,\psi_c)$ be the standard polarized
lattice associated with~$(\Lambda_0,c)$. We define a \emph{complex
  abelian variety with PEL structure defined by~$(\Lambda_0,c)$} to be
a tuple~$(A,\phi,\iota,\eta K)$ where
\begin{itemize}
\item $(A,\phi)$ is a complex polarized abelian variety of
  dimension~$\dim V$,
\item $\iota$ is an embedding~$\Order\embed\End(A)$,
\item $\eta K$ is a $K$-orbit of $\Zhat$-linear isomorphisms of
  $\Order$-modules~$\widehat{\Lambda}_0 \to \widehat{T}(A)$,
\end{itemize}
satisfying the following condition of compatibility with~$(\Lambda_c,\psi_c)$:
\begin{itemize}
\item[($\star\star$)] There exists an isomorphism of $\Order$-modules
  $a\from H_1(A,\Z) \to \Lambda_c$, carrying~$\phi$ to~$\psi_c$,
  carrying~$\eta K$ to~$c K$, and such that the complex structure
  induced by~$a$ on~$V(\R)$ belongs to~$X_+$.
\end{itemize}

If $(A,\phi,\iota,\eta K)$ is a complex abelian variety with PEL
structure defined by~$(\Lambda_0,c)$, then condition~($\star\star$)
implies that~$A$ and~$(\Lambda_c,\psi_c)$ have the same polarization
type, and that the Rosati involution on $\End(A)\otimes\Q$ (which is
adjunction with respect to~$\phi$) restricts to~$*$ on~$B$.


An \emph{isomorphism} between complex abelian varieties with PEL
structure $(A,\phi,\iota,\eta K)$ and $(A',\phi',\iota',\eta' K)$ is
an isomorphism of complex polarized abelian varieties
$f\from(A,\phi)\to(A,'\phi')$ respecting the action of~$\Order$ and
sending $\eta K$ to~$\eta ' K$.

The difference with the setting of
Proposition~\ref{prop:modular-int-lattices} is that isomorphisms of
complex abelian varieties with PEL structure must respect the
polarizations exactly, rather than up to an element
of~$\mu(\Gamma_c)$.  In general,~$\mu(\Gamma_c)\neq \{1\}$, but there
is the following workaround. If~$\eps\in F^\times$ lies in the center
of~$B$, then multiplication by~$\eps$ defines an element in the center
of~$G(\Q)$. Therefore it makes sense to define
\begin{displaymath}
  \mathcal{E}_K = \{\eps \in F^\times\st
  \eps\in K \} = \{\eps\in F^\times \st\eps\in\Gamma_c\}, \quad \text{for every } c\in G(\A_f).
\end{displaymath}


\begin{prop}
  \label{prop:modular-int-AV} Let~$c\in \nolinebreak\CC$, and
  let~$(\Lambda_c,\psi_c)$ be the standard polarized lattice
  associated with~$(\Lambda_0,c)$. If
  $\mu(\mathcal{E}_K) = \mu(\Gamma_c)$, then the map
  \begin{displaymath}
    [x,c]\longmapsto \bigl(V(\R)/\Lambda_c, \psi_c, \iota, cK\bigr),
  \end{displaymath}
  where~$V(\R)$ is seen as a complex vector space via~$x$, and~$\iota$
  is the action of~$\Order$ on~$V(\R)/\Lambda_c$ induced by the action
  of~$B$ on~$V(\R)$, is a bijection between~$\Scal_c$ and the set of
  isomorphism classes of complex abelian varieties with PEL structure
  defined by~$(\Lambda_0,c)$.
\end{prop}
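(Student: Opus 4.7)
The plan is to translate the bijection of Proposition~\ref{prop:modular-int-lattices} from lattices with PEL structure to abelian varieties with PEL structure. Given $[x,c] \in \Scal_c$, the complex torus $V(\R)/\Lambda_c$ with complex structure $x$ is an abelian variety of dimension $\dim V$, the form $\psi_c$ defines a polarization on it (because $x\in X_+$ is positive for $\psi$), and $\iota$ and $cK$ transfer directly; taking $a=\mathrm{id}$ in $(\star\star)$ shows that the output is indeed a complex abelian variety with PEL structure defined by $(\Lambda_0,c)$. The delicate point is that a lattice-PEL isomorphism allows the polarization to be scaled by an element of $\mu(\Gamma_c)\subset F_0^\times$, whereas a PEL isomorphism of abelian varieties must preserve the polarization exactly. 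The hypothesis $\mu(\mathcal{E}_K) = \mu(\Gamma_c)$ is precisely what bridges this gap, since elements of $\mathcal{E}_K$ are central in $G(\Q)$ and lie in $K$, hence act trivially on the lattice and level-structure data, but realize the full scaling group $\mu(\Gamma_c)$.

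For well-definedness, suppose $[x,c]=[x',c]$ in $\Scal_c$: there exists $\gamma\in\Gamma_c$ with $x'=\gamma x\gamma^{-1}$. As a real-linear map, $\gamma$ commutes with~$\iota$ (because $\gamma\in\GL_B(V)$) and preserves~$\Lambda_c$ (because $c^{-1}\gamma c\in K$ stabilizes $\widehat{\Lambda}_0$), is $\C$-linear from $(V(\R),x)$ to $(V(\R),x')$, and sends $cK$ to $cK$; however, it only satisfies $\gamma^*\psi_c = \mu(\gamma)\psi_c$ with $\mu(\gamma)\in\mu(\Gamma_c)$ possibly nontrivial. By the hypothesis, there exists $\eps\in\mathcal{E}_K$ with $\mu(\eps)=\mu(\gamma)$; then $\eps^{-1}\gamma$ still lies in~$\Gamma_c$, still conjugates~$x$ to~$x'$, preserves $\Lambda_c$ and $cK$, and now satisfies $\mu(\eps^{-1}\gamma)=1$. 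It therefore defines an isomorphism of complex abelian varieties with PEL structure between the two candidate images.

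Surjectivity follows immediately from $(\star\star)$: given $(A,\phi,\iota,\eta K)$, an isomorphism $a\from H_1(A,\Z)\to\Lambda_c$ provided by $(\star\star)$ exhibits $A$ as PEL-isomorphic to the image of $[x,c]$, where $x\in X_+$ is the complex structure induced by~$a$. For injectivity, a PEL isomorphism between the images of $[x,c]$ and $[x',c]$ lifts to a real-linear map $f\from V(\R)\to V(\R)$ preserving~$\Lambda_c$, commuting with~$\iota$, preserving $\psi_c$ strictly (so $\mu(f)=1$), with $c^{-1}fc\in K$ and $x'=fxf^{-1}$; such $f$ lies in $G(\Q)_+\cap cKc^{-1}=\Gamma_c$, so $[x,c]=[x',c]$ in $\Scal_c$. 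The main obstacle is the well-definedness step: without the hypothesis $\mu(\mathcal{E}_K)=\mu(\Gamma_c)$, two representatives of the same point of $\Scal_c$ could fail to produce isomorphic polarized abelian varieties, and the entire argument hinges on the fact that the scaling by $\mu(\gamma)$ can be absorbed by a central element $\eps$ without disturbing the other pieces of the PEL structure.
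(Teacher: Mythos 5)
Your proof is correct and rests on the same key mechanism as the paper's: the similitude factor $\mu(\gamma)\in\mu(\Gamma_c)$ is absorbed by a central element $\eps\in\mathcal{E}_K$, which rescales the alternating form while leaving the lattice, complex structure, $\Order$-action and level structure untouched — the paper packages this as imposing $\zeta=1$ in condition $(\star)$ and invoking the lattice/abelian-variety equivalence, while you verify the bijection directly. The only assertion you leave implicit is that the map $f$ in your injectivity step lies in $G(\Q)_+$ rather than merely $G(\Q)$; this does hold, since $\mu(f)=1$ and $\ker(\mu)(\R)$ is connected for simple PEL data of type (A) or (C), so it is a harmless omission rather than a gap.
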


\begin{proof}
  When defining~$\mathcal{Z}_c$ as in
  Proposition~\ref{prop:modular-int-lattices}, we can
  impose~$\zeta = 1$ in condition~$(\star)$ and strengthen the notion
  of isomorphism between lattices with PEL structure to respect the
  polarizations exactly. Indeed, multiplying the isomorphism~$a$
  by~$\eps\in \mathcal{E}_K$ leaves everything invariant except the
  alternating form, which is multiplied by~$\mu(\eps)$. The result
  follows then from the equivalence of categories between lattices and
  complex abelian varieties outlined above.
\end{proof}


\begin{rem}
  The group~$\mu(\mathcal{E}_K)$ always has finite index
  in~$\mu(\Gamma_c)$. Indeed, if~$\Z_{F_0}^\times$ denotes the unit
  group of~$F_0$, then
  \begin{displaymath}
    \mu(\mathcal{E}_K)\subset \mu(\Gamma_c) \subset \Z_{F_0}^\times
  \end{displaymath}
  and~$\mu(\mathcal{E}_K)$ contains a subgroup of finite index
  in~$\Z_{F_0}^\times$, namely all the squares of elements
  in~$\Z_{F_0}^\times \cap K$.  By
  \cite[Thm.~1]{chevalley_DeuxTheoremesArithmetique1951}, there exists
  a compact open subgroup~$M$ of~$\mu(K)$ such that
  $\Z_{F_0}^\times \cap M = \mu(\mathcal{E}_K)$. Define
  $K' = K\cap \mu^{-1}(M)$. Then~$\mathcal{E}_{K'} = \mathcal{E}_K$,
  and for every~$c\in G(\A_f)$, we have
  \begin{displaymath}
    G(\Q)_+\cap c K' c^{-1} = \{\gamma\in \Gamma_c \st \mu(\gamma) \in \mu(\mathcal{E}_K)\}.
  \end{displaymath}
  Therefore the hypothesis of Proposition~\ref{prop:modular-int-AV}
  will be satisfied for the smaller level subgroup~$K'$.
\end{rem}


When considering the classical modular curves as Shimura varieties
associated with the reductive group~$G = \GL_2$ acting on~$V = \Q^2$,
we can take~$\Lambda_0 = \Z^2$ and~$\psi = \tmat{0}{1}{-1}{0}$. Then
Proposition~\ref{prop:modular-int-AV} applies, and we let the reader
check that we recover the usual modular interpretation of modular
curves in terms of complex elliptic curves with level structure.

\subsection{Modular forms on PEL Shimura varieties}
\label{sub:forms}

Our definition of modular equations will involve choices of
coordinates on connected components of PEL Shimura varieties. These
coordinates, also called modular functions, are obtained as quotients
of modular forms. This section briefly presents modular forms on PEL
Shimura varieties without going into technical details.

Let~$(G,X_+)$ be a PEL datum, and let~$K_\infty\subset G(\R)_+$ be the
stabilizer of a fixed complex structure~$x_0\in X_+$. Attached to this
data is a certain canonical character
of~$K_\infty$~\cite[§1.8]{baily_CompactificationArithmeticQuotients1966},
denoted by~$\rho\from K_\infty\to\C^\times$. Let~$K$ be a compact open
subgroup of~$G(\A_f)$. A \emph{modular form} of weight~$w\in\Z$ on
$\Sh_K(G,X_+)(\C)$ is a function
\begin{displaymath}
  f\from G(\Q)_+ \backslash \bigl(G(\R)_+ \times G(\A_f)\bigr) / K \to \C
\end{displaymath}
that satisfies suitable growth and holomorphy
conditions \cite[Prop.~3.2]{milne_CanonicalModelsMixed1990}, and
such that
\begin{displaymath}
  \forall x\in G(\R)_+,\ \forall g\in G(\A_f),\ \forall k_\infty\in
  K_\infty,\ 
  f([xk_\infty, g]) = \rho(k_\infty)^w f([x,g]).
\end{displaymath}
The weight of~$f$ is denoted by~$\wt(f)$. We also say that $f$ is
\emph{of level $K$}.

Let~$\Scal$ be a connected component of~$\Sh_K(\C)$, or a union of
these, and let~$L$ be its field of definition. A \emph{modular form of
  weight~$w$ on~$\Scal$} is the restriction of a modular form of
weight~$w$ on~$\Sh_K(\C)$ to the preimage of~$\Scal$ in
$G(\Q)_+\backslash \bigl(G(\R_+) \times G(\A_f)\bigr) / K$ by the
natural projection. There is a canonical notion of modular forms
on~$\Scal$ being defined over~$L$
\cite[Chap.~III]{milne_CanonicalModelsMixed1990}.  A \emph{modular
  function} on~$\Scal$ is the quotient of two modular forms of the
same weight, the denominator being nonzero on each connected component
of~$\Scal$.

The following result is well known; since we did not find a precise
reference in the literature, we present a short proof.

\begin{thm}
  \label{thm:mf-algebraic} Let~$\Scal$ be a connected component of the
  Shimura variety~$\Sh_K(\C)$, and let~$L$ be its field of definition.
  Then the graded $L$-algebra of modular forms on~$\Scal$ defined
  over~$L$ is finitely generated, and there exists a weight~$w\geq 1$
  such that modular forms of weight~$w$ defined over~$L$ realize a
  projective embedding of~$\Scal$. Every element of the function
  field~$L(\Scal)$ is a quotient of two modular forms of the same
  weight defined over~$L$.
\end{thm}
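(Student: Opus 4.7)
The plan is to combine the Baily–Borel theorem over $\C$ with descent to $L$ provided by the canonical model theory of automorphic line bundles. I would first establish finite generation and a projective embedding by sections of some weight $w_0$ over $\C$, then exhibit the graded ring of $L$-rational modular forms as an $L$-form of its complexification, and finally invoke faithful flatness of $L\hookrightarrow \C$ to descend all three properties.

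Concretely, by the Baily–Borel theorem already cited, the graded $\C$-algebra $R_\C = \bigoplus_{w\geq 0} M_w(\Scal_\C)$ is finitely generated, its $\mathrm{Proj}$ is the Baily–Borel compactification $\Scal^*$ of $\Scal(\C)$, and for some integer $w_0\geq 1$ the weight-$w_0$ piece $M_{w_0}(\Scal_\C)$ is the space of global sections of a very ample line bundle on $\Scal^*$, thus realizing a projective embedding of $\Scal^*$, and in particular a locally closed embedding $\Scal(\C)\hookrightarrow \Proj^N_\C$. Next, Milne's theory of canonical automorphic vector bundles \cite[Chap.~III]{milne_CanonicalModelsMixed1990} attaches to each weight $w$ an $L$-rational subspace $M_w(\Scal)_L \subset M_w(\Scal_\C)$ whose base change to $\C$ recovers all of $M_w(\Scal_\C)$. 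Thus $R_L = \bigoplus_w M_w(\Scal)_L$ is an $L$-form of $R_\C$.

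Finite generation now descends: picking generators of $R_\C$ and expressing them on an $L$-basis of each graded piece produces a finitely generated $L$-subalgebra $R_L' \subseteq R_L$ with $R_L'\otimes_L \C = R_\C$, and faithful flatness of $L\hookrightarrow \C$ forces $R_L' = R_L$. For the projective embedding, pick an $L$-basis $f_0,\ldots,f_N$ of $M_{w_0}(\Scal)_L$; it is then also a $\C$-basis of $M_{w_0}(\Scal_\C)$, so the morphism $\Scal \to \Proj^N_L$ defined by $[f_0:\cdots:f_N]$ base changes to the embedding of the previous paragraph, hence is itself a locally closed immersion by faithfully flat descent. For the statement about function fields, any $f\in L(\Scal)$ can be written, via this embedding, as the ratio of two homogeneous polynomials of the same degree $k$ in the $f_i$, i.e.\ as a quotient of two modular forms of weight $kw_0$ defined over $L$.

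The main technical point I expect to have to pin down is Milne's construction of the $L$-form: one must check that the $L$-rational sections are abundant enough, namely $M_w(\Scal)_L \otimes_L \C = M_w(\Scal_\C)$ rather than merely some proper subspace, and that this $L$-structure is compatible with multiplication so that the graded pieces really assemble into an $L$-form of $R_\C$. Granted this, the remainder is formal descent together with the Baily–Borel theorem and standard projective geometry.
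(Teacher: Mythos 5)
Your overall strategy (Baily--Borel over $\C$, then Milne's rationality of automorphic bundles to get an $L$-form, then descent) is the same as the paper's in its main case, and the descent formalities you sketch are fine. But there is a genuine gap: you apply both the line-bundle form of Baily--Borel (``weight-$w_0$ forms are the global sections of a very ample line bundle'' and, implicitly, ``weight-$w$ forms are sections of $\Mcal_\C^{\otimes w}$ for every $w$'') and Milne's theorem on $L$-rationality of automorphic vector bundles directly at the given level $K$. Both of these require the arithmetic group $\Gamma_c = G(\Q)_+\cap cKc^{-1}$ to act without fixed points: when $\Gamma_c$ has torsion, the automorphic line bundle does not descend to the quotient variety $\Scal=\Gamma_c\backslash X_+$ (only suitably divisible weights correspond to sections of an actual line bundle), so the identification $M_w(\Scal)_L = H^0(\Scal,\Mcal^{\otimes w})$ on which your ``abundance'' claim $M_w(\Scal)_L\otimes_L\C = M_w(\Scal_\C)$ rests is not available as stated. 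The hypotheses of the theorem allow an arbitrary compact open $K$, and the paper's main examples ($K=\GL_2(\Zhat)$ with $\Gamma=\SL_2(\Z)$, $K=\GSp_4(\Zhat)$ with $\Sp_4(\Z)$, the Hilbert modular groups) all have torsion, so this is not a corner case you can wave away.

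The paper deals with exactly this point by a reduction you are missing: it first assumes $K$ small enough that $\Gamma_c$ is torsion-free, where your argument (ample $\Mcal_\C$ from Baily--Borel, $L$-rational model $\Mcal$ from Milne's Chap.~III, Thm.~4.3, then ampleness of $\Mcal$ and flat base change) is essentially what is done. For general $K$ it chooses a normal finite-index subgroup $K'\subset K$ with torsion-free arithmetic groups, identifies modular forms on $\Scal$ defined over $L$ with the forms on a component $\Scal'$ above $\Scal$ (defined over $L'$) that are invariant under a subgroup of $K/K'$, and then invokes Noether's theorem on invariants of finite groups to transfer finite generation (and hence the embedding and function-field statements) down to level $K$. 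To repair your proof you would need to add this reduction step, or otherwise justify the line-bundle interpretation and the $L$-structure with full abundance at a possibly non-neat level.
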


\begin{proof}
  Choose an element~$c\in\CC\subset G(\A_f)$ defining the connected
  component~$\Scal$, so that $\Scal = \Gamma_c\backslash X_+$ where
  $\Gamma_c = G(\Q)_+\cap c K c^{-1}$. Assume first that the level
  subgroup~$K$ of~$G(\A_f)$ is sufficiently small, so that~$\Gamma_c$
  is torsion-free. Then, by the Baily--Borel theorem
  \cite[Thm.~10.11]{baily_CompactificationArithmeticQuotients1966},
  there exists an ample line bundle~$\Mcal_\C$ on~$\Scal$ such that
  for every~$w\geq 1$, the algebraic sections
  of~$\Mcal_\C^{\otimes w}$ are exactly the modular forms of
  weight~$w$ on~$\Scal$.

  In fact, $\Mcal_\C$ is the inverse determinant of the tangent bundle
  on~$\Scal$
  \cite[Prop.~7.3]{baily_CompactificationArithmeticQuotients1966}.
  Since~$\Scal$ has a model over~$L$, there is a line bundle~$\Mcal$
  on~$\Scal$ defined over~$L$ such that
  $\Mcal \otimes_L \C = \Mcal_\C$. This is a particular case of a
  general result on the rationality of automorphic vector bundles
  \cite[Chap.~III, Thm.~4.3]{milne_CanonicalModelsMixed1990}. For
  every $w\geq 1$, the $L$-vector space modular forms of weight~$w$
  on~$\Scal$ defined over~$L$ is~$H^0(\Scal, \Mcal^{\otimes
    w})$. Since~$\Mcal\otimes_L\C$ is ample, $\Mcal$ is ample too, and
  this implies the conclusions of the theorem.

  In general, we can always find a level subgroup~$K'$ of finite index
  in~$K$ such that the arithmetic subgroups $G(\Q)_+\cap c K' c^{-1}$
  for $c\in G(\A_f)$ are torsion free
  \cite[Prop.~3.5]{milne_IntroductionShimuraVarieties2005}, and we can
  assume that~$K'$ is normal in~$K$. Let~$\Scal'$ be a connected
  component of~$\Sh_{K'}(\C)$ lying over~$\Scal$, and let~$L'$ be its
  field of definition. Then the conclusions of the theorem hold
  for~$\Scal'$. We can identify the modular forms on~$\Scal$ defined
  over~$L$ with the modular forms on~$\Scal'$ defined over~$L'$ that
  are invariant under the action of a subgroup of~$K/K'$. Therefore
  the conclusions of the theorem also hold for~$\Scal$ by Noether's
  theorem \cite{noether_EndlichkeitssatzInvariantenEndlicher1915} on
  invariants under finite groups.
\end{proof}

We can also consider modular forms that are symmetric under certain
automorphisms of~$\Sh_K$. Let~$\Sigma$ be a finite group of
automorphisms of~$V$ as a $\Q$-vector space that leaves the symplectic
form~$\psi$ invariant, and also acts on~$B$ in such a way that
\begin{displaymath}
  \forall u\in V,\ \forall b\in B,\ \forall \sigma\in\Sigma, \
  \sigma(bu) = \sigma(b)\sigma(u).
\end{displaymath}
This implies that the elements of~$\Sigma$ commute with the
involution~$*$, and hence leave~$F_0$ stable. Under these assumptions,
each~$\sigma\in \Sigma$ induces an automorphism of~$G$ defined
over~$\Q$, also denoted by~$\sigma$. Assume further that these
automorphisms leave~$G(\R)_+$, $X_+$, $K$, $K_\infty$, $\nu$ and the
character~$\rho$ invariant. Then~$\Sigma$ can be seen as a finite
group of automorphisms of~$\Scal$, and one can check as
in~\cite[Thm.~13.6]{milne_IntroductionShimuraVarieties2005} that these
automorphisms are defined over~$L$.  Then for every modular form~$f$
of weight~$w$ on~$\Scal$ defined over~$L$, and
every~$\sigma\in\Sigma$, the function
\begin{displaymath}
  \act{\sigma}{f}\defby [x,g] \mapsto f([\sigma^{-1}(x), \sigma^{-1}(g)])
\end{displaymath}
is a modular form of weight~$w$ on~$\Scal$ defined over~$L$. We say
that~$f$ is \emph{symmetric} under~$\Sigma$ if $\act{\sigma}{f} = f$ for
every $\sigma\in\Sigma$.

\begin{prop}
  \label{prop:mf-symmetric}
  Let~$\Sigma$ be a finite group of automorphisms of~$G$ as
  above. Then the graded $L$-algebra of symmetric modular forms
  on~$\Scal$ defined over~$L$ is finitely generated, and every
  symmetric modular function on~$\Scal$ defined over~$L$ is the
  quotient of two symmetric modular forms of the same weight defined
  over~$L$.
\end{prop}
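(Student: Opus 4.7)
The plan is to deduce Proposition~\ref{prop:mf-symmetric} from Theorem~\ref{thm:mf-algebraic} by exploiting the fact that $\Sigma$ is a \emph{finite} group acting $L$-linearly on the graded $L$-algebra of modular forms on $\Scal$ defined over $L$.

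First I would set up the action. By Theorem~\ref{thm:mf-algebraic}, the graded $L$-algebra
\begin{displaymath}
  R = \bigoplus_{w\geq 0} H^0(\Scal, \Mcal^{\otimes w})
\end{displaymath}
of modular forms on $\Scal$ defined over $L$ is finitely generated. The action $f \mapsto \act{\sigma}{f}$ is $L$-linear, multiplicative, and preserves the weight, so it makes $\Sigma$ act by graded $L$-algebra automorphisms on $R$. The subalgebra of symmetric modular forms defined over $L$ is precisely the invariant subring $R^\Sigma$, which inherits the grading. Since $L$ has characteristic zero and $\Sigma$ is finite, Noether's theorem~\cite{noether_EndlichkeitssatzInvariantenEndlicher1915}, quoted in the proof of Theorem~\ref{thm:mf-algebraic}, applies to the finitely generated $L$-algebra $R$ and yields that $R^\Sigma$ is finitely generated over $L$. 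This proves the first assertion.

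For the second assertion, let $F$ be a symmetric modular function on $\Scal$ defined over $L$. By Theorem~\ref{thm:mf-algebraic}, we can write $F = g/h$ for some modular forms $g, h$ of the same weight~$w$ defined over~$L$, with $h$ nonvanishing on each connected component. Following the standard norm trick, I would set
\begin{displaymath}
  h' = \prod_{\sigma\in\Sigma} \act{\sigma}{h}, \qquad
  g' = g \cdot \prod_{\sigma\in\Sigma,\,\sigma\neq \mathrm{id}} \act{\sigma}{h}.
\end{displaymath}
Both $g'$ and $h'$ are products of modular forms of weight~$w$ defined over~$L$, hence modular forms of weight~$w\abs{\Sigma}$ defined over~$L$, and $h'$ is nonvanishing on each connected component. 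By construction $F = g'/h'$. The form $h'$ is visibly symmetric, since $\Sigma$ permutes its factors. For any $\tau\in\Sigma$, using $\act{\tau}{F} = F$ and the multiplicativity of the action, we get
\begin{displaymath}
  \act{\tau}{g'} = \act{\tau}{F}\cdot \act{\tau}{h'} = F \cdot h' = g',
\end{displaymath}
so $g'$ is symmetric as well. This exhibits $F$ as a quotient of two symmetric modular forms of the same weight defined over~$L$.

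I do not expect any serious obstacle: both ingredients (the Noether finiteness theorem for invariants of finite groups, and the norm-type averaging trick to symmetrize denominators) are standard, and Theorem~\ref{thm:mf-algebraic} already packages all the analytic and rationality content we need. The only point requiring mild care is verifying that the actions of $\Sigma$ really preserve the $L$-structure on modular forms; this is granted by the fact, invoked just before the proposition, that each $\sigma\in\Sigma$ induces an automorphism of $\Scal$ defined over~$L$.
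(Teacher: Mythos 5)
Your proof is correct and follows essentially the same route as the paper, which simply invokes Theorem~\ref{thm:mf-algebraic} together with Noether's finiteness theorem for invariants of a finite group. The symmetrization (norm) trick you spell out for the second assertion is exactly the implicit step in the paper's one-line argument, and your verification that the $\Sigma$-action is a weight-preserving $L$-algebra action is the right point to check.
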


\begin{proof}
  This results from Theorem~\ref{thm:mf-algebraic} and another
  application of Noether's theorem.
\end{proof}

\subsection{Hecke correspondences}
\label{sub:hecke}


We fix a PEL datum~$(G,X_+)$ as above, as well as a compact open
subgroup~$K\subset G(\A_f)$.  Let~$\delta \in G(\A_f)$, and let
$K' = K\cap\,\delta K\delta^{-1}$. Consider the diagram
\begin{equation}
  \label{diag:hecke}
  \begin{tikzcd}
    \Sh_{K'}(\C) \ar[d, "p_1"] \ar[rr, "R(\delta)"] &&
    \Sh_{\delta^{-1} K'\delta}(\C) \ar[d, "p_2"] \\
    \Sh_K(\C) && \Sh_K(\C)
  \end{tikzcd}
\end{equation}
where the map~$R(\delta)$ is $[x,g]\mapsto [x,g \delta]$, and~$p_1$
and~$p_2$ are the natural projections. This diagram defines a
correspondence~$H_\delta$ in $\Sh_K\times \Sh_K$, called the
\emph{Hecke correspondence} of level~$\delta$, consisting of all pairs
of the form~$\bigl(p_1(x), p_2(R(\delta)x)\bigr)$ for $x\in
\Sh_{K'}$. Hecke correspondences are algebraic: the
diagram~\eqref{diag:hecke} is the analytification of a diagram
existing at the level of algebraic varieties. Moreover, Hecke
correspondences are defined over the reflex field
\cite[Thm.~13.6]{milne_IntroductionShimuraVarieties2005}.


We define the \emph{degree} of~$H_\delta$ to be the index
\begin{displaymath}
  \Heckedeg(\delta) = [K:K'] = [K:K\cap\,\delta K\delta^{-1}].
\end{displaymath}
This index is finite as both~$K$ and~$K'$ are compact open subgroups
of~$G(\A_f)$, and is the degree of the map~$\Sh_{K'}\to \Sh_K$.  One
can also consider~$H_\delta$ as a map from~$\Sh_K$ to its
$\Heckedeg(\delta)$-th symmetric power, sending~$z\in \Sh_K$ to the set
$\{z'\in \Sh_K\st (z,z')\in H_\delta\}$.

It is easy to see how~$H_\delta$ behaves with respect to connected
components: if~$z$ lies in the connected component indexed by
$t\in T(\A_f)$, then its images lie in the connected component indexed
by~$t\, \nu(\delta)$.

We call the Hecke correspondence~$H_\delta$ \emph{absolutely
  irreducible} if for every connected component~$\Scal$ of~$\Sh_K(\C)$
with field of definition~$L$, the preimage of~$\Scal$ in~$\Sh_{K'}$ is
absolutely irreducible as a variety defined over~$L$ (or equivalently,
connected as a variety over~$\C$). A sufficient condition
for~$H_\delta$ to be absolutely irreducible is
that~$\nu(K') = \nu(K)$.


\paragraph{Modular interpretation of Hecke correspondences.}
In the modular interpretation, Hecke correspondences describe
isogenies of a certain type between polarized abelian varieties.
Let~$\Lambda_0$, $\CC$, and~$\Order$ be as in~§\ref{sub:modular-int},
and write
\begin{displaymath}
  K = \bigsqcup_{i=1}^{\Heckedeg(\delta)} \kappa_i K',
\end{displaymath}
where~$\kappa_i\in G(\A_f)$ for each~$1\leq i\leq \Heckedeg(\delta)$.
Let~$c\in \CC$, denote by~$\Scal_c$ the connected component
of~$\Sh_K(\C)$ indexed by~$c$, and consider the lattice with PEL
structure~$(\Lambda_c, x, \iota, \psi_c, cK)$ associated with a
point~$[x,c]\in \Scal_c$ by
Proposition~\ref{prop:modular-int-lattices}.


In order to construct the lattices associated with~$[x,c]$ via the
Hecke correspondence~$H_\delta$, we partition the orbit~$c K$ into the
$K'$-orbits~$c \kappa_i K'$ for~$1\leq i\leq \Heckedeg(\delta)$. Each
element $c \kappa_i \delta\in G(\A_f)$ is then a $\Zhat$-linear
embedding of $\Order$-modules $\widehat{\Lambda}_0\embed V(\A_f)$; it
is well defined up to right multiplication by~$\delta^{-1}K'\delta$,
hence by~$K$. Let~$\Lambda_i\subset V(\Q)$ be the lattice such
that~$\Lambda_i\otimes\Zhat$ is the image of this embedding. There is
still a natural action of~$\Order$ on~$\Lambda_i$. The
decomposition~$c \kappa_i\delta K = q_i c' K$, with~$q_i\in G(\Q)_+$
and $c'\in \CC$, is well defined, and the element~$c'$ does not depend
on~$i$.

\begin{prop}
  \label{prop:hecke} Let~$\delta\in G(\A_f)$,
  let~$z=[x,c]\in \Scal_c$, and construct~$\Lambda_i,q_i,c'$ as above.
  Then the image of~$z$ under the Hecke correspondence~$H_\delta$ in
  the modular interpretation of Proposition~\ref{prop:modular-int-AV}
  is given by the~$\Heckedeg(\delta)$ isomorphism classes of tuples
  with representatives
  \begin{displaymath}
    \Bigl (\Lambda_i, x,
    \dfrac{\lambda_{c'}}{\lambda_c}\psi_c
    \bigl(\mu(q_i^{-1})\,\cdot\,,\cdot \bigr),
    c \kappa_i\delta K \Bigr)
    \qquad \text{for}\quad 1\leq i\leq \Heckedeg(\delta).
  \end{displaymath}
\end{prop}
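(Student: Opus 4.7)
The plan is to unwind the definition of~$H_\delta$ in the double-quotient presentation of the Shimura variety and then translate the resulting points back into the lattice-theoretic dictionary of Proposition~\ref{prop:modular-int-lattices}. Starting from $z = [x, c] \in \Scal_c$, the decomposition $K = \bigsqcup_i \kappa_i K'$ exhibits its preimages under $p_1\from \Sh_{K'}(\C) \to \Sh_K(\C)$ as the $\Heckedeg(\delta)$ classes $[x, c\kappa_i]$; applying~$R(\delta)$ and then~$p_2$ produces the points $[x, c\kappa_i\delta] \in \Sh_K(\C)$, which constitute the image of~$z$ under~$H_\delta$. By the remark on connected components preceding the proposition, all of these lie in a common component~$\Scal_{c'}$.

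Next I would put each $[x, c\kappa_i\delta]$ into the normal form $[y, c']$ required by Proposition~\ref{prop:modular-int-lattices}. Using the decomposition $c\kappa_i\delta = q_i c' k$ with $q_i \in G(\Q)_+$, $c' \in \CC$, $k \in K$, and acting on the left by~$q_i^{-1}$ (conjugation on~$X_+$, left multiplication on~$G(\A_f)$), I rewrite $[x, c\kappa_i\delta] = [q_i^{-1} x q_i, c']$. Proposition~\ref{prop:modular-int-lattices} identifies this with the lattice with PEL structure $(\Lambda_{c'}, q_i^{-1} x q_i, \iota, \psi_{c'}, c' K)$, where~$\iota$ is the natural action of~$\Order$ on~$\Lambda_{c'}$.

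The final step is to transport this tuple via the $\Order$-linear isomorphism $q_i \from \Lambda_{c'} \to \Lambda_i$. The equality $\Lambda_i = q_i \Lambda_{c'}$ is immediate from $c\kappa_i\delta\, \widehat{\Lambda}_0 = q_i c'\, \widehat{\Lambda}_0$ together with the local-global principle for lattices. Under this transport the complex structure $q_i^{-1} x q_i$ straightens to~$x$, the level orbit becomes $q_i c' K = c\kappa_i\delta K$, and the polarization reads
\begin{equation*}
  \psi_{c'}\bigl(q_i^{-1} u, q_i^{-1} v\bigr) = \lambda_{c'}\, \psi\bigl(\mu(q_i^{-1}) u, v\bigr) = \tfrac{\lambda_{c'}}{\lambda_c}\, \psi_c\bigl(\mu(q_i^{-1}) u, v\bigr),
\end{equation*}
where the first equality uses the defining relation $\psi(gu, gv) = \psi(\mu(g) u, v)$ for $g \in G(\Q)$ and the second uses $\psi_c = \lambda_c \psi$, $\psi_{c'} = \lambda_{c'} \psi$. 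This is precisely the tuple asserted in the proposition.

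The argument is essentially a careful diagram chase; the only step that goes beyond pure bookkeeping is the polarization calculation above, and it is exactly this computation that produces both the ratio $\lambda_{c'}/\lambda_c$ and the multiplier $\mu(q_i^{-1})$ appearing in the statement. As a final consistency check one may verify compatibility condition~$(\star)$ for the resulting tuple with the standard polarized lattice $(\Lambda_{c'}, \psi_{c'})$ of~$\Scal_{c'}$ by taking $a = q_i^{-1}$ and $\zeta = 1$, which is automatic from the construction.
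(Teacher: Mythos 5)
Your argument is correct and is essentially the paper's own (very terse) proof: identify the images of $[x,c]$ under $H_\delta$ as the points $[q_i^{-1}x,\,c']$, transport the standard tuple along the $\Order$-linear isomorphism between $\Lambda_{c'}$ and $\Lambda_i$ induced by $q_i$, and compute the polarization via the relation $\psi(gu,gv)=\psi(\mu(g)u,v)$, which produces the factors $\lambda_{c'}/\lambda_c$ and $\mu(q_i^{-1})$. You merely run the transport in the opposite direction ($q_i\from\Lambda_{c'}\to\Lambda_i$ instead of $q_i^{-1}$) and spell out the scalar normalization and condition $(\star)$ more explicitly, which is a fine and slightly more complete write-up of the same diagram chase.
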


\begin{proof}
  By construction, the images of~$[x,c]$ via the Hecke correspondence
  are the points~$[q_i^{-1} x, c']$ of~$\Sh_K(\C)$.  The
  relation~$c \kappa_i \delta K = q_i c' K$ shows that the
  map~$q_i^{-1}$ sends the lattice~$\Lambda_i$ to~$\Lambda_{c'}$. This
  map also respects the action of~$\Order$, and sends the complex
  structure~$x$ to~$q_i^{-1}x$. Finally, it sends the
  polarization~$(u,v)\mapsto \psi_c(u,v)$ on~$\Lambda_i$ to
  $(u,v)\mapsto \psi_c\bigl(\mu(q_i)u, v\bigr)$ on~$\Lambda_{c'}$.
\end{proof}


After multiplying~$\delta$ by a unique suitable element
in~$\Q^\times_+$, which does not change~$H_\delta$, we can assume that
$\delta(\widehat{\Lambda}_0) \subset \widehat{\Lambda}_0$ and
$\delta(\widehat{\Lambda}_0)\not\subset p\widehat{\Lambda}_0$ for
every prime~$p$; we say that~$\delta$ is \emph{normalized} with
respect to~$\Lambda_0$. In this case, we define the \emph{isogeny
  degree} of~$H_\delta$ as the unique integer~$\isogdeg(\delta)\geq 1$
such that~$\isogdeg(\delta)^{-1}\det(\delta)$ is a unit in~$\Zhat$. In
other words,
\begin{displaymath}
  \isogdeg(\delta) = \# \bigl(
  \widehat{\Lambda}_0/\delta(\widehat{\Lambda}_0) \bigr).
\end{displaymath}
For a general~$\delta\in G(\A_f)$, we set
$\isogdeg(\delta) = \isogdeg(\lambda\delta)$
where~$\lambda\in\Q_+^\times$ is chosen such that~$\lambda\delta$ is
normalized with respect to~$\Lambda_0$.

\begin{cor}
  \label{cor:hecke-isog}
  Let~$\delta\in G(\A_f)$. Then, in the modular interpretation of
  Proposition~\ref{prop:modular-int-AV}, the Hecke
  correspondence~$H_\delta$ sends an abelian variety~$A$ with PEL
  structure to~$\Heckedeg(\delta)$ abelian
  varieties~$A_1,\ldots, A_{\Heckedeg(\delta)}$ such that for
  every~$1\leq i\leq \Heckedeg(\delta)$, there exists an isogeny $A_i\to A$ of
  degree~$\isogdeg(\delta)$.
\end{cor}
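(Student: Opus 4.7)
The plan is to read off the image points of $H_\delta$ from Proposition~\ref{prop:hecke} and then exhibit each isogeny $A_i \to A$ as the quotient map induced by an inclusion of lattices $\Lambda_i \subset \Lambda_c$ of index $\isogdeg(\delta)$. After possibly replacing $\delta$ by $\lambda\delta$ for some $\lambda\in\Q_+^\times$ (which alters neither $H_\delta$ nor $\isogdeg(\delta)$), we may assume that $\delta$ is normalized with respect to $\Lambda_0$, so that $\delta(\widehat{\Lambda}_0) \subset \widehat{\Lambda}_0$ with index $\isogdeg(\delta)$. Let $[x,c]\in\Scal_c$ correspond to the abelian variety $A = V(\R)/\Lambda_c$ via Proposition~\ref{prop:modular-int-AV}, and pick a decomposition $K = \sqcup_i \kappa_i K'$ as in Proposition~\ref{prop:hecke}.

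The main computation is the following. Since each $\kappa_i$ lies in $K$, and $K$ stabilizes $\widehat{\Lambda}_0$ by assumption, we have $\kappa_i(\widehat{\Lambda}_0) = \widehat{\Lambda}_0$, hence
\begin{displaymath}
  \widehat{\Lambda}_i = c\kappa_i\delta(\widehat{\Lambda}_0) \subset c\kappa_i(\widehat{\Lambda}_0) = c(\widehat{\Lambda}_0) = \widehat{\Lambda}_c,
\end{displaymath}
and the index $[\widehat{\Lambda}_c:\widehat{\Lambda}_i]$ is equal to $[\widehat{\Lambda}_0:\delta(\widehat{\Lambda}_0)] = \isogdeg(\delta)$ because $c\kappa_i$ is a $\Zhat$-linear isomorphism. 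The local-global principle for lattices then gives $\Lambda_i\subset\Lambda_c$ with $\#(\Lambda_c/\Lambda_i)=\isogdeg(\delta)$.

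To conclude, I identify the $i$-th image abelian variety with $V(\R)/\Lambda_i$ carrying the complex structure $x$: by the proof of Proposition~\ref{prop:hecke}, $q_i^{-1}$ carries $\Lambda_i$ onto $\Lambda_{c'}$ and $x$ to $q_i^{-1}x$, so it induces an isomorphism of complex tori $V(\R)/\Lambda_i \xrightarrow{\sim} V(\R)/\Lambda_{c'}$ (the latter being $A_i$ in the modular interpretation attached to $[q_i^{-1}x, c']$). The inclusion $\Lambda_i\subset\Lambda_c$ then yields a natural surjective morphism of complex tori $V(\R)/\Lambda_i \to V(\R)/\Lambda_c$ with kernel $\Lambda_c/\Lambda_i$ of order $\isogdeg(\delta)$, hence an isogeny $A_i \to A$ of degree $\isogdeg(\delta)$, as required.

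The only mild obstacle is bookkeeping across the two sides of the correspondence: the image point is $[q_i^{-1}x,c']$ rather than $[x, c\kappa_i\delta]$, and one must check that passing through $q_i^{-1}$ does not alter the $\Order$-action or spoil the polarization class, so that the quotient map $V(\R)/\Lambda_i\to V(\R)/\Lambda_c$ is a genuine morphism of polarized abelian varieties with PEL structure; this is immediate from the equivariance of $q_i^{-1}$ stated in the proof of Proposition~\ref{prop:hecke}.
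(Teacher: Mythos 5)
Your proof is correct and follows essentially the same route as the paper: normalize $\delta$, observe via Proposition~\ref{prop:hecke} that each $\Lambda_i$ is a sublattice of $\Lambda_c$ with the same complex structure and with $\Lambda_c/\Lambda_i\simeq\widehat{\Lambda}_0/\delta(\widehat{\Lambda}_0)$ of order $\isogdeg(\delta)$, so the quotient of complex tori is the desired isogeny. You merely spell out the details the paper leaves implicit (the inclusion via $\kappa_i(\widehat{\Lambda}_0)=\widehat{\Lambda}_0$, the index computation, and the identification of $A_i$ with $V(\R)/\Lambda_i$); note also that the corollary only asks for an isogeny, so your final remark about preserving the polarization class is not needed.
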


\begin{proof}
  We can assume that~$\delta$ is normalized with respect
  to~$\Lambda_0$. Then, in the result of Proposition~\ref{prop:hecke},
  each lattice~$\Lambda_i$ for $1\leq i\leq \Heckedeg(\delta)$ is a
  sublattice of~$\Lambda_c$ endowed with the same complex
  structure~$x$. Moreover, for every~$1\leq i\leq \Heckedeg(\delta)$,
  we have
  $\Lambda_c/\Lambda_i\simeq
  \widehat{\Lambda}_0/\delta(\widehat{\Lambda}_0)$, so the index of
  each~$\Lambda_i$ in~$\Lambda_c$ is~$\isogdeg(\delta)$.
\end{proof}

\paragraph{A relation between degrees}
For later purposes, we state an inequality
relating~$\Heckedeg(\delta)$ and a power
of~$\isogdeg(\delta)$. Since~$K\subset G(\A_f)$ is open, there exists
a smallest integer~$N\geq 1$ such that
\begin{displaymath}
  \bigl \{g\in G(\A_f) \cap \GL(\widehat{\Lambda}_0) \st g = 1\ \mathrm{mod}\
  N\widehat{\Lambda}_0 \bigr\} \subset K,
\end{displaymath}
that we call the~\emph{level} of~$K$ with respect
to~$\widehat{\Lambda}_0$.

\begin{prop}
  \label{prop:d-l-relation}
  There exists a constant~$C$ depending on~$K$ and~$\Lambda_0$ such
  that for every $\delta\in G(\A_f)$, we have
  $\Heckedeg(\delta)\leq C\,\isogdeg(\delta)^{(\dim V)^2}$. We can take
  $C = N^{(\dim V)^2}$, where~$N$ is the level of~$K$ with respect
  to~$\widehat{\Lambda}_0$.
\end{prop}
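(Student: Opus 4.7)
The plan is to exhibit an explicit principal congruence subgroup contained in~$K\cap\delta K\delta^{-1}$ and then bound its index in~$K$ by elementary linear algebra. First, since neither~$\Heckedeg(\delta)$ nor~$\isogdeg(\delta)$ changes when~$\delta$ is rescaled by a positive rational, I may assume that~$\delta$ is normalized with respect to~$\Lambda_0$, so that $\delta(\widehat{\Lambda}_0)\subset\widehat{\Lambda}_0$ and $\widehat{\Lambda}_0/\delta\widehat{\Lambda}_0$ has order~$\isogdeg(\delta)$. The critical consequence of normalization is the inclusion $\isogdeg(\delta)\cdot \delta^{-1}(\widehat{\Lambda}_0)\subset\widehat{\Lambda}_0$, which holds because $\isogdeg(\delta)$ annihilates the finite group $\widehat{\Lambda}_0/\delta\widehat{\Lambda}_0$.

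Write $d=\dim V$, $K_0 = G(\A_f)\cap\GL(\widehat{\Lambda}_0)$, and for every integer $M\geq 1$ set $K_0(M) = \{g\in K_0\st g\equiv 1\bmod M\widehat{\Lambda}_0\}$. By the standing hypothesis of~§\ref{sub:modular-int}, $K\subset K_0$, and by definition of~$N$, $K_0(N)\subset K$. The main step of the proof is to check the inclusion
\begin{displaymath}
  K_0\bigl(N\isogdeg(\delta)\bigr)\subset K\cap \delta K\delta^{-1}.
\end{displaymath}
Membership in~$K$ is immediate from $K_0(N\isogdeg(\delta))\subset K_0(N)\subset K$. For membership in $\delta K\delta^{-1}$, write an element $g = 1 + N\isogdeg(\delta)\,B\in K_0(N\isogdeg(\delta))$ with $B\in \End(\widehat{\Lambda}_0)$ and compute
\begin{displaymath}
  \delta^{-1}g\delta = 1 + N\bigl(\isogdeg(\delta)\,\delta^{-1}B\delta\bigr).
\end{displaymath}
The endomorphism $\isogdeg(\delta)\,\delta^{-1}B\delta$ preserves~$\widehat{\Lambda}_0$ because $B\delta(\widehat{\Lambda}_0)\subset\widehat{\Lambda}_0$ and $\isogdeg(\delta)\,\delta^{-1}(\widehat{\Lambda}_0)\subset\widehat{\Lambda}_0$; applying the same computation to $g^{-1}\in K_0(N\isogdeg(\delta))$ yields invertibility in~$K_0$. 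Hence $\delta^{-1}g\delta\in K_0(N)\subset K$, as required.

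It then remains to bound $\Heckedeg(\delta)\leq [K:K_0(N\isogdeg(\delta))]$, which I split as $[K:K_0(N)]\cdot[K_0(N):K_0(N\isogdeg(\delta))]$. Reduction modulo~$N$ embeds $K_0/K_0(N)$ into~$\GL_d(\Z/N\Z)$, and since $K\subset K_0$ the first factor is at most~$N^{d^2}$. Reduction modulo $N\isogdeg(\delta)$ embeds $K_0(N)/K_0(N\isogdeg(\delta))$ into the subset of $\GL_d(\Z/N\isogdeg(\delta)\Z)$ of matrices congruent to~$1$ modulo~$N$, whose cardinality is~$\isogdeg(\delta)^{d^2}$. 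Multiplying gives the bound $\Heckedeg(\delta)\leq N^{d^2}\isogdeg(\delta)^{d^2}$. There is no real obstacle in this argument; the only point demanding attention is the identity $\isogdeg(\delta)\,\delta^{-1}(\widehat{\Lambda}_0)\subset\widehat{\Lambda}_0$ guaranteed by normalization, which is precisely what makes conjugation by~$\delta$ compatible with the passage from level~$N\isogdeg(\delta)$ down to level~$N$.
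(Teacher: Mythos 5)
Your proof is correct and takes essentially the same route as the paper: after normalizing~$\delta$, both arguments show that the elements of $G(\A_f)\cap\GL(\widehat{\Lambda}_0)$ congruent to~$1$ modulo $N\isogdeg(\delta)\widehat{\Lambda}_0$ lie in $K\cap\delta K\delta^{-1}$, and then bound $\Heckedeg(\delta)$ by counting matrices modulo $N\isogdeg(\delta)$, yielding the same bound $(N\isogdeg(\delta))^{(\dim V)^2}$. You merely make explicit the conjugation step (via $\isogdeg(\delta)\,\delta^{-1}(\widehat{\Lambda}_0)\subset\widehat{\Lambda}_0$) that the paper asserts without detail, and split the index count into two factors instead of one.
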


\begin{proof}
  We can assume that~$\delta$ is normalized with respect
  to~$\widehat{\Lambda}_0$.  Then~$K\cap\,\delta K \delta^{-1}$
  contains all the elements
  $g\in G(\A_f) \cap \GL(\widehat{\Lambda}_0)$ that are the identity
  modulo $\widehat{\Lambda} = \isogdeg(\delta) N \widehat{\Lambda}_0$. In
  other words we have a morphism
  $K\to \GL(\Lambda_0/N\isogdeg(\delta)\Lambda_0)$ whose kernel is
  contained in~$K\cap\delta K\delta^{-1}$.  This yields the result
  since
  $\#\GL(\Lambda_0/N\isogdeg(\delta)\Lambda_0)\leq (N\isogdeg(\delta))^{(\dim
    V)^2}.$
\end{proof}

\begin{rem}
  The upper bound on~$\Heckedeg(\delta)$ given in
  Proposition~\ref{prop:d-l-relation} is far from optimal in many
  cases: for instance, if~$\delta$ is normalized with respect
  to~$\widehat{\Lambda}_0$, if $\isogdeg(\delta)$ is prime to~$N$, and
  if moreover~$\delta$ normalizes the image of~$K$
  in~$\GL(\Lambda_0/N\Lambda_0)$, then
  $\Heckedeg(\delta) \leq \isogdeg(\delta)^{(\dim V)^2}$. But in
  general, the level of~$K$ does enter into account. As an example,
  take~$G = \GL_2$, $\delta = \tmat{0}{1}{1}{0}$, and
  \begin{displaymath}
    K = \bigr\{\tmat{a}{b}{c}{d}\in \GL_2(\Zhat)
    \st a = d = 1\text{ mod } N \text{ and } c = 0 \text{ mod } N\bigr\}.
  \end{displaymath}
  Then $\Heckedeg(\delta) = N$ even though~$\isogdeg(\delta)=1$. In the modular
  interpretation, the Hecke correspondence~$H_\delta$ has the effect
  of forgetting the initial~$K$-level structure entirely.
\end{rem}

\section{Modular equations on PEL Shimura varieties}
\label{sec:modeq}

This section presents a general definition of modular equations on PEL
Shimura varieties, generalizing three examples mentioned in the
introduction: the elliptic modular polynomials, and the modular
equations of Siegel and Hilbert type for abelian surfaces (see
§\ref{sub:siegel} and~§\ref{sub:hilbert}).

\subsection{The example of elliptic modular polynomials}
\label{sub:Phiell}

Elliptic modular polynomials are the simplest example of modular
equations. They are usually defined in terms of classical modular
forms~\cite[§11.C]{cox_PrimesFormNy2013}. In order to motivate the
general definition, we translate this definition in the adelic
language.

The underlying PEL datum is obtained by taking~$V = \Q^2$,
$\psi = \tmat{0}{1}{-1}{0}$, and~$B=\Q$ with~$*$ the trivial
involution. Then~$G = \GL_2$, and~$G(\Q)_+$ consists of all
rational~$2\times 2$ matrices with positive determinant. We
take~$\Lambda_0 = \Z^2$ and $K = \GL_2(\Zhat)$, so that~$\Sh_K(\C)$
has only one connected component~$\Scal$ (indexed by the identity
matrix) and the maximal order of~$B$ stabilizing~$\Lambda_0$
is~$\Order = \Z$. If we take the complex
structure~$x_0 = \tmat{0}{1}{-1}{0}$ as a base point, then~$X_+$ is
naturally identified with the Poincaré upper half plane~$\Half_1$,
with~$x_0$ corresponding to~$i\in \Half_1$. Then~$\Scal$ is identified
with the modular curve~$\SL_2(\Z)\backslash \Half_1$, and modular
forms on~$\Scal$ in the sense of~§\ref{sub:forms} correspond exactly
to modular forms of level~$\SL_2(\Z)$ on~$\Half_1$ in the classical
sense. The reflex field~$E(G,X_+)$ is equal to~$\Q$ in this case, and
the~$j$-invariant realizes an isomorphism between~$\Sh_K$ and the
affine line~$\A^1_\Q$; in particular~$j$ generates the function field
of~$\Scal$ over~$\Q$.

Let~$\ell$ be a prime number. Then the function on~$\Half_1$ given
by~$\tau\mapsto j(\tau/\ell)$ is invariant under the following
congruence subgroup of~$\SL_2(\Z)$:
\begin{displaymath}
  \Gamma^0(\ell) = \bigl\{\tmat{a}{b}{c}{d}\in \SL_2(\Z)\st b=0 \text{ mod }\ell\bigr\}.
\end{displaymath}
Therefore, the coefficients of the polynomial
\begin{displaymath}
  P_\ell(\tau) = \prod_{\gamma\in \Gamma^0(\ell)\backslash \SL_2(\Z)}
  \Bigl(Y - j(\linv\gamma\tau)\Bigr),\quad \text{for }\tau\in \Half_1
\end{displaymath}
are modular functions of level~$\SL_2(\Z)$. The elliptic modular
polynomial~$\Phi_\ell$ is the unique element of~$\C(X)[Y]$ satisfying
the relation~$\Phi_\ell(j(\tau),Y) = P_\ell(\tau)$ for
every~$\tau\in\Half_1$; actually~$\Phi_\ell\in \Z[X,Y]$. In other
words, we have a map
\begin{equation}
  \label{diag:classical-hecke}
  \Gamma^0(\ell)\backslash\Half_1\to \Scal\times \Scal, \quad \tau\mapsto (\tau, \tau/\ell),
\end{equation}
and the product~$\Scal\times \Scal$ is birational
to~$\Proj^1\times \Proj^1$ via~$(j,j)$. The modular
curve~$\Gamma^0(\ell)\backslash\Half_1$ is birational to its image
in~$\Proj^1\times \Proj^1$, and~$\Phi_\ell$ is an equation of this
image.

Remark that for every~$\tau\in \Half_1$, we have
\begin{displaymath}
  \tau/\ell = \delta^{-1}\tau, \quad\text{where}\quad
  \delta = \tmat{\ell}{0}{0}{1} \in G(\Q)_+.
\end{displaymath}
Therefore, if~$\tau\in \Half_1$ corresponds to a
point~$[x,I_2] \in \Sh_K(\C)$, then~$\tau/\ell$ corresponds to the
point~$[x,\delta]$.
Moreover~$\Gamma^0(\ell) = \SL_2(\Z) \cap \bigl(\delta
\SL_2(\Z)\delta^{-1}\bigr)$. Therefore the
map~\eqref{diag:classical-hecke} is precisely the Hecke
correspondence~$H_\delta$ given in diagram~\eqref{diag:hecke}.

The function~$\tau\mapsto j(\tau/\ell)$ corresponds to the modular
function
\begin{displaymath}
  \begin{matrix}
    j_\delta\defby & G(\Q)_+\backslash\bigl(G(\A_f)\times G(\R)_+\bigr) &\to &\C\\
    & [x,g] & \mapsto & j([x,g\delta]),
  \end{matrix}
\end{displaymath}
which is right-invariant under~$\delta K\delta^{-1}$. Let~$K''$ be a
normal subgroup of finite index in~$K$ contained
in~$K' = K \cap \,\delta K\delta^{-1}$. We let~$K$ act (on the left)
on the set of modular functions of level~$K''$ as follows: if~$k\in K$
and~$f$ is such a function, then we define
\begin{displaymath}
  \act{k}{f}\defby [x,g]\mapsto f([x,gk]).
\end{displaymath}
Since~$K'$ is contained in the stabilizer of~$j_\delta$, the
coefficients of the polynomial
\begin{equation}
  \label{eq:Qell}
  Q_\ell = \prod_{\gamma\in K/K'} \bigl( Y - \act{\gamma}{j_\delta}\bigr)
\end{equation}
are modular functions of level~$K$; the analogue of~$Q_\ell$ in the
classical world is exactly~$P_\ell$, as inversion induces a bijection
between right cosets of~$\Gamma^0(\ell)$ in~$\SL_2(\Z)$ and left
cosets of~$K'$ in~$K$. The general definition of modular equations
involves analogues of the product~\eqref{eq:Qell} for other Hecke
correspondences.

\subsection{General definition of modular equations}
\label{sub:modeq}


Let~$(G,X_+)$ be a PEL datum, let~$K$ be a compact open subgroup of
$G(\A_f)$, and let~$\Sigma$ be a finite group of automorphisms of~$G$
as in~§\ref{sub:forms}. Let~$n$ be the complex dimension of~$X_+$; we
assume that~$n\geq 1$. Let~$\Scal$,~$\Tcal$ be connected components
of~$\Sh_K(G,X_+)(\C)$, and let~$L$ be their field of definition.


To complete the picture, we also need to choose coordinates on~$\Scal$
and~$\Tcal$. Since the field~$L(\Scal)$ of modular functions
on~$\Scal$ has transcendence degree~$n$ over~$L$, the
field~$L(\Scal)^\Sigma$ of modular functions on~$\Scal$ that are
symmetric under~$\Sigma$ also has transcendence degree~$n$
over~$L$. Choose a transcendence basis $(j_1,\ldots, j_n)$
of~$L(\Scal)^\Sigma$ over~$L$, and another symmetric function
$j_{n+1}$ that generates the remaining finite extension, whose degree
is denoted by~$e$. On~$\Scal$, the function~$j_{n+1}$ satisfies a
minimal relation of the form
\begin{equation}
  \label{eq:jn+1}
  E(j_1,\ldots,j_{n+1}) = 0
  \qquad \text{where}\quad E = \sum_{k = 0}^{e}
  E_k(J_1,\ldots,J_n)\,J_{n+1}^{\,k} \in L[J_1,\ldots,J_{n+1}]
\end{equation}
and~$E$ is irreducible. If~$L(\Scal)^\Sigma$ is purely transcendental
over~$L$ (if~$\Sigma=\{1\}$, this means that~$\Scal$ is birational
to~$\Proj^n_L$), then we take~$j_{n+1}=1$, ignore eq.~\eqref{eq:jn+1},
and work with~$n$ invariants only.

We proceed similarly to define coordinates
on~$\Tcal$: no confusion will arise if we also denote them by
$j_1,\ldots, j_{n+1}$. We refer to all the data defined up to now as
the \emph{PEL setting}. Throughout the paper, our constants will
depend on this data only.

Given a PEL setting as above, let~$\delta\in G(\A_f)$ be an adelic
element of~$G$ defining an absolutely irreducible Hecke
correspondence~$H_\delta$ that intersects~$\Scal\times \Tcal$
nontrivially. We want to define explicit polynomials with coefficients
in~$L(\Scal)$, called the \emph{modular equations of level $\delta$},
describing~$H_\delta$ in the product~$\Scal\times \Tcal$. To do this,
we mimic the definition of elliptic modular polynomials in the
language of PEL Shimura varities given in~§\ref{sub:Phiell}. As
in~§\ref{sub:hecke}, we write~$K' = K\cap \, \delta K\delta^{-1}$.


Let~$K''$ be a normal subgroup of finite index in~$K$, contained
in~$K'$, and stabilized by~$\Sigma$. Let~$\Scal''$ be the
preimage of~$\Scal$ in~$\Sh_{K''}(\C)$.  There is a left action
of~$K \rtimes \Sigma$ on the space of modular functions
on~$\Scal''$, given by
\begin{displaymath}
  \act{(k,\sigma)}{f} \defby [x,g] \mapsto \act{\sigma}{f}([x,gk]).
\end{displaymath}
The modular functions that are invariant under~$K'\rtimes\{1\}$
(resp.~$K\rtimes\Sigma$) are exactly the rational functions
on~$H_\delta\cap(\Scal\times\Tcal)$ defined over~$\C$ (resp. the
rational functions on~$\Scal$ defined over~$\C$ and invariant
under~$\Sigma$).  The modular functions
\begin{displaymath}
  j_{i,\delta} \defby [x,g] \mapsto j_i([x,g\delta])
\end{displaymath}
for~$1\leq i\leq n+1$ are defined over~$L$ and generate the function
field of~$H_\delta\cap(\Scal\times\Tcal)$. We define the decreasing
chain of subgroups
\begin{displaymath}
  K \rtimes\Sigma = K_0 \supset K_1 \supset \cdots \supset K_{n+1}
  \supset K'
\end{displaymath}
as follows: for each $1\leq i\leq n+1$, the subgroup~$K_i$ is the
stabilizer of the functions
$j_{1,\delta},\ldots,j_{i,\delta}$. In~§\ref{sub:Phiell}, we
had~$K_0=K$ and~$K_1=K'$.

Galois theory applied to the Galois covering~$\Scal''\to\Scal$ tells
us that for every~$1\leq i\leq n+1$, the
field~$L(j_1,\ldots,j_{n+1},j_{1,\delta},\ldots,j_{i,\delta})$ is the
function field of the preimage of~$\Scal$ in the Shimura
variety~$\Sh_{K_i}$, and consists of all modular functions
on~$\Scal''$ defined over~$L$ that are invariant under~$K_i$. In other
words, we have a tower of function fields:
\begin{displaymath}
  \begin{tikzcd}
    L(j_1,\ldots,j_{n+1},j_{1,\delta},\ldots,j_{n+1,\delta}) =
    L(H_\delta \cap(\Scal\times\Tcal)) \\
    \vdots \ar[u, -, "{\text{degree } d_{n+1}}"] \\
    L(j_1,\ldots,j_{n+1},j_{1,\delta}) \ar[u, -, "{\text{degree }
      d_2}"] \\
    L(\Scal)^\Sigma. \ar[u, -, "{\text{degree } d_1}"]
  \end{tikzcd}
\end{displaymath}
where $d_i = [K_{i-1}:K_i]$ for~$1\leq i\leq n+1$. The modular
equations of level~$\delta$ are defining equations for the successive
extensions in the tower.

\begin{defn}
  \label{def:modeq}
  The \emph{modular equations} of level~$\delta$
  on the product~$\Scal\times \Tcal$ are the tuple
  $(\Psi_{\delta,1}, \Psi_{\delta,2},\ldots, \Psi_{\delta,n+1})$
  defined as follows: for each $1\leq m\leq n+1$,~$\Psi_{\delta,m}$ is
  the multivariate polynomial in the~$m$ variables $Y_1,\ldots, Y_m$
  defined by
  \begin{displaymath}
    \Psi_{\delta,m} = \sum_{\gamma\in K_0/K_{m-1}} \left( 
      \left(
        \prod_{i=1}^{m-1} \prod_{
        \gamma_i}
        \Bigl(Y_i - \act{\gamma_i}{j_{i,\delta}} \Bigr)
      \right)
      \prod_{\gamma_m \in K_{m-1}/K_m} \Bigl(Y_m -
        \act{\gamma\gamma_m}{j_{m,\delta}} \Bigr)
    \right)
  \end{displaymath}
  where the middle product is over all $\gamma_i\in K_0/K_i$ such that
  $\gamma_i = \gamma$ modulo~$K_{i-1}$, but $\gamma_i\neq \gamma$
  modulo~$K_i$. The expression for~$\Psi_{\delta,m}$ makes sense,
  because multiplying~$\gamma$ on the right by an element in~$K_{m-1}$
  only permutes the factors in the last product.
\end{defn}

In the case of the Hecke correspondence considered
in~§\ref{sub:Phiell}, the polynomial~$\Psi_{\delta,1}$ is
precisely~$Q_\ell$. The precise formula is inspired from preexisting
definitions of modular equations for abelian surfaces
\cite{broker_ModularPolynomialsGenus2009,milio_QuasilinearTimeAlgorithm2015,milio_ModularPolynomialsHilbert2020,martindale_HilbertModularPolynomials2020}. We
will return to these examples in~§\ref{sub:siegel}
and~§\ref{sub:hilbert}.

Let us give elementary properties of modular equations. First, we need
a lemma.

\begin{lem}
  \label{lem:all-CC}
  Let~$\gamma,\gamma'\in K_0$ and~$1\leq i\leq n+1$. Assume that
  the equality~$\gamma\cdot j_{i,\delta}= \gamma'\cdot j_{i,\delta}$
  holds on one connected component of~$\Scal''$. Then it
  holds on all connected components of~$\Scal''$.
\end{lem}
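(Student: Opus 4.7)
The plan is to promote the hypothesis, which is an equality of complex-analytic functions on a single connected component of $\Scal''(\C)$, to an identity in the function field of $\Scal''$ viewed as an $L$-scheme, and then conclude using that the $\C$-components of $\Scal''$ form a single orbit under $\mathrm{Gal}(\overline{L}/L)$.

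First I would verify that each modular function $\gamma \cdot j_{i,\delta}$ on $\Sh_{K''}$ is defined over $L$. This is a descent check resting on three ingredients: the coordinates $j_i$ on $\Tcal$ are chosen over $L$ by construction in §\ref{sub:modeq}; the Hecke translation $R(\delta)$ and the automorphisms of $\Sh_{K''}$ given by right multiplication by elements of $K$ are defined over the reflex field $E\subset L$ by \cite[Thm.~13.6]{milne_IntroductionShimuraVarieties2005}; and the automorphisms in $\Sigma$ are defined over $L$ as discussed in §\ref{sub:forms}. Consequently, $F := \gamma \cdot j_{i,\delta} - \gamma' \cdot j_{i,\delta}$ is a rational function on $\Scal''$ defined over $L$, and its zero locus on $\Scal''$ is a Zariski-closed subset stable under $\mathrm{Gal}(\overline{L}/L)$.

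Second I would show that $\mathrm{Gal}(\overline{L}/L)$ acts transitively on $\pi_0(\Scal''(\C))$, equivalently that $\Scal''$ is irreducible as an $L$-scheme. This is the step that delivers the conclusion: a Galois-invariant, $L$-algebraic subset of $\Scal''$ contains either all or none of the components, and the hypothesis excludes the empty case. The transitivity comes from the reciprocity law governing connected components of a PEL Shimura variety recalled at the end of §\ref{sub:pel}: the set of components of $\Sh_{K''}(\C)$ lying above $\Scal$ forms a torsor under a finite abelian quotient of $T(\A_f)$, and $\mathrm{Gal}(\overline{L}/L)$ acts on it through a surjective reciprocity map, so all components lying above the single geometrically connected component $\Scal$ fall into one Galois orbit.

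The main obstacle is Step 2, which draws on the canonical-model theory for Shimura varieties and must be invoked carefully so that it applies at the level $K''$ rather than $K$. Step 1 is a routine rationality bookkeeping, and the combination of the two steps yields the lemma with no further work.
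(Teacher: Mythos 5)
There is a genuine gap, and it sits exactly where you locate the main weight of the argument: Step 2. The transitivity of $\Gal(\overline{L}/L)$ on $\pi_0(\Scal''(\C))$ — equivalently, the irreducibility of $\Scal''$ as an $L$-scheme — is false in general. The Galois action on the set of geometric components of $\Sh_{K''}$ is by translation through a reciprocity map into (a quotient of) $\nu(G(\Q)_+)\backslash T(\A_f)/\nu(K'')$, and the image of that map is in general a \emph{proper} subgroup; it is not surjective onto the kernel of $\pi_0(\Sh_{K''})\to\pi_0(\Sh_K)$, which is what your argument needs. A concrete failure is the Hilbert case of §\ref{sub:hilbert}: for $G=\GL_2(F)$ with $F$ real quadratic, $\Scal$ the level-one component (defined over $\Q$) and $K''$ a principal congruence subgroup of level $N$, the components of $\Sh_{K''}$ above $\Scal$ are indexed by a ray-class-type quotient of $(\Z_F/N)^\times$, while $\Gal(\overline{\Q}/\Q)$ acts only through the image of the cyclotomic character $(\Z/N)^\times$, so the components above $\Scal$ fall into several Galois orbits. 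Since the lemma must hold for an arbitrary $K''$ normal in $K$, contained in $K'$ and $\Sigma$-stable, one cannot choose $K''$ to avoid this. Your Step 1 (rationality of $\gamma\cdot j_{i,\delta}$ over $L$ and Galois-stability of the vanishing locus) is fine, but it only propagates the identity to the Galois orbit of the given component, not to all of them.

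A second warning sign is that your argument never uses the hypothesis that $H_\delta$ is absolutely irreducible, which the paper's proof uses in an essential way: it gives $G(\Q)_+\backslash G(\A_f)/K = G(\Q)_+\backslash G(\A_f)/K'$, which is what allows the equality, once transported back to the fixed representative $c$ of $\Scal$, to be rewritten in a form (an identity $j_{i,\delta}([x,c]) = j_{i,\delta}([\sigma'^{-1}(\gamma_2^{-1}\gamma_1\sigma(x)),c])$ for all $x\in X_+$) that is independent of which component of $\Scal''$ one started from. The paper's proof is thus a purely group-theoretic/analytic computation with no canonical-model input, whereas your route both invokes heavier rationality machinery and relies on a transitivity statement that fails; to repair it you would essentially have to redo the paper's adelic computation.
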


\begin{proof}
  Write~$\gamma = (k,\sigma)$ and~$\gamma' = (k',\sigma')$
  where~$k,k'\in K$ and~$\sigma,\sigma'\in \Sigma$.
  Let~$c\in G(\A_f)$ be an adelic element of~$G$ defining the connected
  component~$\Scal$ in~$\Sh_K(\C)$, so
  that~$\Scal = \Gamma_c\backslash X_+$ with
  $\Gamma_c = G(\Q)_+\cap cKc^{-1}$. By assumption, there exists an
  element $g\in G(\A_f)$ such that~$g=c$ in the double quotient space
  $G(\Q)_+\backslash G(\A_f)/K$, and
  \begin{equation}
    \label{eq:equality-one-CC}
    \forall x\in X_+,\ j_{i,\delta}\bigl([\sigma^{-1}(x), \sigma^{-1}(gk)]\bigr)
    = j_{i,\delta}\bigl([\sigma'^{-1}(x), \sigma'^{-1}(gk')]\bigr).
  \end{equation}
  Since~$H_\delta$ is absolutely irreducible, we have
  $G(\Q)_+\backslash G(\A_f)/K = G(\Q)_+\backslash G(\A_f)/K'$. Using
  the description of connected components of a PEL Shimura variety
  in~§\ref{sub:pel}, and the fact that the action~$\Sigma$
  leaves~$\nu$ invariant, we find that there exist
  $\gamma_1,\gamma_2\in G(\Q)_+$ such that $gk = \gamma_1 \sigma(c)$
  mod~$\sigma(K')$ and $gk' = \gamma_2 \sigma'(c)$
  mod~$\sigma'(K')$. Then equation~\eqref{eq:equality-one-CC}
  is equivalent to the following:
  \begin{equation}
    \label{eq:equality-all-CC}
    \forall x\in X_+,\ j_{i,\delta}\bigl([x,c]\bigr)
    = j_{i,\delta}\bigl([\sigma'^{-1}(\gamma_2^{-1}\gamma_1\sigma(x)), c]\bigr).
  \end{equation}
  Note that~$\gamma_2^{-1}\gamma_1$ is well-defined and independent
  of~$g$, up to multiplication on the left by an element
  of~$G(\Q)_+\cap \sigma'(cK'c^{-1})$, and on the right by an element
  of~$G(\Q)_+\cap\sigma(cK'c^{-1})$. Therefore
  equation~\eqref{eq:equality-all-CC} holds for every~$g\in G(\A_f)$
  such that $g=c$ in $G(\Q)_+\backslash G(\A_f)/K$. In other words,
  the equality $\gamma\cdot j_{i,\delta} = \gamma'\cdot j_{i,\delta}$
  holds on every connected component of~$\Scal''$.
\end{proof}


\begin{prop}
  \label{prop:modeq-evaluate}
  Let $1\leq m\leq n+1$, and let $\gamma\in K_0/K_{m-1}$. Then, up to
  multiplication by an element in
  $L(j_1,\ldots,j_{n+1},\gamma\cdot j_{1,\delta},\ldots,\gamma\cdot
  j_{m-1,\delta})^\times$, we have
  \begin{displaymath}
    \Psi_{\delta,m}(\act{\gamma}{j_{1,\delta}}\,,\ldots,\act{\gamma}{j_{m-1,\delta}}\,,
    Y_m) = \prod_{\gamma_m\in K_{m-1}/K_m} \Bigl(Y_m - \act{\gamma\gamma_m}{j_{m,\delta}}\Bigr).
  \end{displaymath}
\end{prop}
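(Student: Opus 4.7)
The plan is to evaluate the sum defining $\Psi_{\delta,m}$ directly after substituting $Y_i = \gamma\cdot j_{i,\delta}$ for $1\leq i \leq m-1$, and to show that all but the term indexed by the coset of $\gamma$ in $K_0/K_{m-1}$ vanish. For any other $\gamma'\in K_0/K_{m-1}$ with $\gamma'\not\equiv\gamma\pmod{K_{m-1}}$, I would choose the smallest $i\in\{1,\ldots,m-1\}$ such that $\gamma'\not\equiv\gamma\pmod{K_i}$; this exists because the chain $K_0\supset K_1\supset\cdots$ is decreasing and $\gamma'\equiv\gamma\pmod{K_0}$ trivially. Minimality forces $\gamma\equiv\gamma'\pmod{K_{i-1}}$ while $\gamma\not\equiv\gamma'\pmod{K_i}$, so $\gamma$ itself qualifies as one of the admissible indices $\gamma_i$ in the middle product at level $i$, and the resulting factor $(Y_i - \gamma\cdot j_{i,\delta})$ vanishes upon substitution. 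For the unique surviving term, writing $\gamma' = \gamma k$ with $k\in K_{m-1}$, the map $\gamma_m \mapsto k\gamma_m$ permutes $K_{m-1}/K_m$, so its last product equals $\prod_{\gamma_m\in K_{m-1}/K_m}(Y_m - \gamma\gamma_m\cdot j_{m,\delta})$, and the accompanying unit factor is
\[
U = \prod_{i=1}^{m-1}\prod_{\sigma\in K_{i-1}/K_i,\,\sigma\neq 1}\bigl(\gamma\cdot j_{i,\delta} - \gamma\sigma\cdot j_{i,\delta}\bigr).
\]

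It remains to place $U$ in $L(j_1,\ldots,j_{n+1}, \gamma\cdot j_{1,\delta},\ldots,\gamma\cdot j_{m-1,\delta})^\times$. Nonvanishing of each factor of $U$ will follow from Lemma~\ref{lem:all-CC}: an equality $\gamma\sigma\cdot j_{i,\delta} = \gamma\cdot j_{i,\delta}$ on one connected component would propagate to all of them, contradicting the definition of $K_i$ as the stabilizer of $j_{1,\delta},\ldots,j_{i,\delta}$ since $\sigma\notin K_i$. For field membership, I would observe that for each fixed $i$, the polynomial $\prod_{\sigma\in K_{i-1}/K_i}(Y - \gamma\sigma\cdot j_{i,\delta})$ is the image under the action of $\gamma$ of the minimal polynomial of $j_{i,\delta}$ over $L(j_1,\ldots,j_{n+1}, j_{1,\delta},\ldots,j_{i-1,\delta})$, using the Galois-theoretic description of the tower given just before Definition~\ref{def:modeq}. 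Because $\gamma\in K_0$ fixes $L$ and each $j_k$ (the latter being symmetric modular functions of level $K$), these coefficients lie in $L(j_1,\ldots,j_{n+1}, \gamma\cdot j_{1,\delta},\ldots,\gamma\cdot j_{i-1,\delta})$; differentiating this polynomial and evaluating at $Y = \gamma\cdot j_{i,\delta}$ then places the $i$-th factor of $U$ inside $L(j_1,\ldots,j_{n+1}, \gamma\cdot j_{1,\delta},\ldots,\gamma\cdot j_{i,\delta})$, and taking the product over $i\leq m-1$ concludes.

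The combinatorial vanishing of the $\gamma'\not\equiv\gamma$ terms is the straightforward part. The main (modest) obstacle will be the field-membership verification for $U$, which relies essentially on the Galois-theoretic interpretation of the tower $L(\Scal)^\Sigma \subset \cdots \subset L(H_\delta\cap(\Scal\times\Tcal))$ and on the $K_0$-invariance of the base coordinates $j_k$.
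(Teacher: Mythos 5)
Your proposal is correct and follows essentially the same route as the paper: substitute $Y_i=\act{\gamma}{j_{i,\delta}}$ into the defining sum, observe that every term indexed by $\gamma'\not\equiv\gamma \ (\mathrm{mod}\ K_{m-1})$ acquires a vanishing factor, identify the surviving middle product as the multiplicative factor, and get its invertibility from Lemma~\ref{lem:all-CC} combined with the definition of the subgroups~$K_i$. The only difference is in packaging the field-membership step: the paper checks that the factor is invariant under $\gamma K_{m-1}\gamma^{-1}$ and invokes the Galois correspondence once, whereas you treat each level~$i$ separately, realizing the $i$-th block as the derivative of the $\gamma$-translate of the degree-$d_i$ polynomial $\prod_{\sigma\in K_{i-1}/K_i}(Y-\act{\sigma}{j_{i,\delta}})$ evaluated at $\act{\gamma}{j_{i,\delta}}$ — the same Galois-theoretic input on the tower, applied factor by factor.
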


\begin{proof}
  By Definition~\ref{def:modeq}, the above equality holds true after
  multiplying the right hand side by
  \begin{displaymath}
    f = \prod_{i=1}^{m-1} \prod_{\substack{\gamma_i\in K_0/K_i\\\gamma_i\neq \gamma\\\gamma_i=\gamma\text{ mod } K_{i-1}}}
    \Bigl(\act{\gamma}{j_{i,\delta}} - \act{\gamma_i}{j_{i, \delta}} \Bigr)
  \end{displaymath}
  The function~$f$ a product of nonzero modular functions
  on~$\Scal''$ defined over~$L$.  In order to show
  that~$f\in L(j_1,\ldots,j_{n+1},\gamma\cdot
  j_{1,\delta},\ldots,\gamma\cdot j_{m-1,\delta})$, we check that~$f$
  is invariant under the action of~$\gamma K_{m-1}\gamma^{-1}$.  By
  definition of the subgroups~$K_i$, no factor of~$f$ is identically
  zero on~$\Scal''$. Therefore~$f$ is invertible by
  Lemma~\ref{lem:all-CC}.
\end{proof}

Let~$1\leq m\leq n+1$.  Proposition~\ref{prop:modeq-evaluate} implies
that up to scaling, the univariate polynomial
$\Psi_{\delta,m}(j_{1,\delta},\ldots,j_{m-1,\delta},Y_m)$ is the
minimal polynomial of the function~$j_{m,\delta}$ over the field
$L(j_1,\ldots,j_{n+1},j_{1,\delta},\ldots,j_{m-1,\delta})$. In other
words, when the multiplicative coefficient in
Proposition~\ref{prop:modeq-evaluate} does not vanish, which is
generically the case,~$\Psi_{\delta,m}$ provides all the possible
values for~$j_{m,\delta}$ once $j_1,\ldots, j_{n+1}$ and
$j_{1,\delta}, \ldots, j_{m-1,\delta}$ are known. In particular,
modular equations vanish on~$H_\delta$ as promised.

We could also define other modular equations~$\Phi_{\delta,m}$ for
which there is true equality in Proposition~\ref{prop:modeq-evaluate},
as in the case of the classical modular polynomial~$\Phi_l$, but they
have a more complicated expression. In practice, using the
polynomials~$\Psi_{\delta,m}$ is more convenient as they are typically
smaller.

\begin{prop}
  \label{prop:modeq-coeffs} Let $1\leq m\leq n+1$. The coefficients
  of~$\Psi_{\delta,m}$ lie in $L(j_1,\ldots,j_{n+1})$. The degree
  of~$\Psi_{\delta,m}$ in~$Y_m$ is~$[K_{m-1}:K_m]$, and for
  each~$1\leq i<m$, the degree of~$\Psi_{\delta,m}$ in~$Y_i$ is at
  most~$[K_{i-1}:K_i] - 1$.
\end{prop}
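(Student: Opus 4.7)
My plan is to verify the three claims directly from Definition~\ref{def:modeq}, combining a short coset bookkeeping argument for $K_0$-invariance with the Galois-theoretic description of function fields set up earlier in this section, and invoking Proposition~\ref{prop:modeq-evaluate} to upgrade the bound in~$Y_m$ to an equality.

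For the first claim, I would show that $\Psi_{\delta,m}$, viewed as a polynomial in $Y_1,\ldots,Y_m$ whose coefficients are modular functions on~$\Scal''$ defined over~$L$, is invariant under the left action of $K_0 = K\rtimes\Sigma$. Given $k\in K_0$, applying~$k$ to~$\Psi_{\delta,m}$ replaces every occurrence of $\act{\gamma_i}{j_{i,\delta}}$ by $\act{(k\gamma_i)}{j_{i,\delta}}$ and every $\act{(\gamma\gamma_m)}{j_{m,\delta}}$ by $\act{(k\gamma\gamma_m)}{j_{m,\delta}}$. Since left multiplication by~$k$ permutes the coset representatives of $K_0/K_{m-1}$ and of $K_0/K_i$, and since the conditions $\gamma_i \equiv \gamma \pmod{K_{i-1}}$, $\gamma_i \not\equiv \gamma \pmod{K_i}$ depend only on $\gamma^{-1}\gamma_i$ and are therefore preserved by the joint relabelling $\gamma\leftrightarrow k\gamma$, $\gamma_i\leftrightarrow k\gamma_i$, this relabelling turns the shifted expression back into the original one. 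Hence $\act{k}{\Psi_{\delta,m}} = \Psi_{\delta,m}$. By the Galois-theoretic identification of $L$-rational $K_0$-invariant modular functions on~$\Scal''$ with $L(\Scal)^\Sigma = L(j_1,\ldots,j_{n+1})$, recorded before Definition~\ref{def:modeq}, this establishes the first assertion.

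For the degree bounds, direct inspection of the formula yields the upper bounds: for each fixed $\gamma\in K_0/K_{m-1}$, the middle product contributes exactly $[K_{i-1}:K_i]-1$ linear factors in each~$Y_i$ with $i<m$, while the final product contributes $[K_{m-1}:K_m]$ linear factors in~$Y_m$; summing over~$\gamma$ cannot raise these individual degrees. To show the bound in~$Y_m$ is an equality, I would specialize $Y_i \mapsto \act{\gamma_0}{j_{i,\delta}}$ for $1\leq i<m$ with some fixed $\gamma_0\in K_0/K_{m-1}$; by Proposition~\ref{prop:modeq-evaluate}, the specialization is a nonzero multiple of $\prod_{\gamma_m\in K_{m-1}/K_m}(Y_m - \act{(\gamma_0\gamma_m)}{j_{m,\delta}})$, which has degree exactly $[K_{m-1}:K_m]$ in~$Y_m$.

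The only mildly delicate point is the coset re-indexing for the $K_0$-invariance, but this is purely formal bookkeeping, so I do not anticipate any substantive obstacle.
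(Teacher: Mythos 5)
Your proposal is correct and follows essentially the same route as the paper, whose proof simply asserts that the $K_0$-invariance of $\Psi_{\delta,m}$ is clear from Definition~\ref{def:modeq} (so its coefficients are $\Sigma$-invariant, $L$-rational functions on~$\Scal$, i.e.\ lie in $L(j_1,\ldots,j_{n+1})$) and that the degree statements are obvious; you merely spell out the coset relabelling and, for the exact degree in~$Y_m$, add the clean observation that the specialization in Proposition~\ref{prop:modeq-evaluate} (whose multiplicative factor is invertible) already has degree $[K_{m-1}:K_m]$ in~$Y_m$.
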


\begin{proof}
  It is clear from Definition~\ref{def:modeq} that the action of~$K_0$
  leaves~$\Psi_{\delta,m}$ invariant. Hence the coefficients
  of~$\Psi_{\delta,m}$ are rational functions on~$\Scal$ invariant
  under~$\Sigma$ and defined over~$L$, so the first statement
  holds. The second part is obvious.
\end{proof}

In general, using a nontrivial~$\Sigma$ increases the degree of
modular equations. This has a geometric interpretation: modular
equations describe the Hecke correspondence~$H_\delta$ and its
conjugates under~$\Sigma$ simultaneously.

Let~$J_1,\ldots, J_{n+1}$ be indeterminates, and let
$1\leq m\leq n+1$. By the equation~\eqref{eq:jn+1} satisfied
by~$j_{n+1}$ on~$\Scal$, there exists a unique element of the
ring~$L(J_1,\ldots,J_n)[J_{n+1}, Y_1,\ldots, Y_m]$ with degree at
most~$e-1$ in~$J_{n+1}$ which, when evaluated at $J_i = j_i$ for
$1\leq i\leq n+1$, yields~$\Psi_{\delta,m}$. In the sequel, we also
denote it by~$\Psi_{\delta,m}$ for simplicity. Therefore the
coefficients of~$\Psi_{\delta,m}$ will be either functions on~$\Scal$,
i.e.~as elements of~$L(j_1,\ldots,j_{n+1})$, or multivariate rational
fractions in the indeterminates~$J_1,\ldots,J_{n+1}$ that are
polynomial in~$J_{n+1}$ of degree at most~$e-1$, depending on the
context.

\begin{rem}
  \label{rem:simpler-modeqs}
  In several cases, the function~$j_{1,\delta}$ already generates the
  whole extension of function fields, so that
  $K_1 = \cdots = K_{n+1}=K'$,
  \begin{align*}
    \Psi_{\delta,1} &= \prod_{\gamma_1 \in K_0/K'} \bigl(Y_1 - \act{\gamma_1}{j_{1,\delta}}\bigr),
  \end{align*}
  and for every $2\leq m\leq n+1$,
  \begin{equation}
    \label{eq:simpler-Psidelta}
    \Psi_{\delta,m} = \sum_{\gamma\in K_0/K'}
                      \left(\Biggl(\prod_{\gamma_1\neq \gamma}
                      \bigl(Y_1 - \act{\gamma_1}{j_{1,\delta}}\bigr) \Biggr)
                      \bigl(Y_m - \act{\gamma}{j_{m,\delta}}\bigr) \right).
  \end{equation}
  In this case, for each~$2\leq m\leq n+1$, we
  have~$\Psi_{\delta,m}(j_{1,\delta}) =
  \partial_{Y_1}\Psi_{\delta,1}(j_{1,\delta}) (Y_m - j_{m,\delta})$,
  where~$\partial_{Y_1}$ denotes derivative with respect
  to~$Y_1$. Therefore~$\Psi_{\delta,m}$ is just the expression
  of~$j_{m,\delta}$ as an element of~$L(\Scal)^\Sigma[\,j_{1,\delta}]$
  in a compact representation inspired from
  \cite{gaudry_2adicCMMethod2006}.

  In this case, we often keep only the constant term in
  equation~\eqref{eq:simpler-Psidelta}, and consider the modular
  equations~$\Psi_{\delta,m}$ for~$2\leq m\leq n+1$ as elements of the
  ring~$L(J_1,\ldots,J_n)[J_{n+1},Y]$ with degree at most~$e$
  in~$J_{n+1}$, defined by
  \begin{displaymath}
    \Psi_{\delta,m}(j_1,\ldots,j_{n+1}) = \sum_{\gamma\in K_0/K'} \bigl(\act{\gamma}{j_{m,\delta}}\bigr)
    \prod_{\gamma_1\neq \gamma}\bigl(Y - \act{\gamma_1}{j_{1,\delta}} \bigr).
  \end{displaymath}
  Then, we simply
  have~$j_{m,\delta} =
  \Psi_{\delta,m}(j_{1,\delta})/\partial_{Y_1}\Psi_{\delta,1}(j_{1,\delta})$.
\end{rem}

\subsection{Modular equations of Siegel type for abelian surfaces}
\label{sub:siegel}

The Siegel modular varieties are prominent examples of PEL Shimura
varieties. They are moduli spaces for complex abelian varieties of
dimension~$g$ with a certain polarization and level structure.
Another example is given by the Hilbert modular varieties, for which
the PEL structure contains an additional real multiplication
embedding. In this subsection and the next, we explain how these
examples fit in the general setting of PEL Shimura varieties, and we
show that modular equations of Siegel and Hilbert type in dimension 2
\cite{milio_QuasilinearTimeAlgorithm2015,milio_ModularPolynomialsHilbert2020}
are special cases of modular equations as defined above.


\paragraph{Siegel moduli spaces.}
Let~$g\geq 1$. The \emph{Siegel modular variety} of dimension~$g$
\cite[§6]{milne_IntroductionShimuraVarieties2005} is obtained by
taking $B = \Q$, with trivial involution~$*$, and taking the
symplectic module $(V,\psi)$ to be $V = \Q^{2g}$ with
\begin{displaymath}
  \forall u, v\in V,\ \psi(u,v) = u^t \mat{0}{I_g}{-I_g}{0} v.
\end{displaymath}
Then $G = \GSp_{2g}$. The $\Q$-algebra~$B$ is simple of
type~(C). We can choose~$X_+$ to be the set of all complex structures
on~$V(\R)$ that are positive for~$\psi$
\cite[§6]{milne_IntroductionShimuraVarieties2005}, and we have
\begin{displaymath}
  G(\R)_+ = \{g\in G(\R) \st \mu(g)>0 \}.
\end{displaymath}
The reflex field is~$\Q$
\cite[§14]{milne_IntroductionShimuraVarieties2005}.  Generalizing the
example of modular curves, we can identify~$X_+$ with the Siegel upper
half-space~$\Half_g$ endowed with the usual action
of~$\GSp_{2g}(\R)_+$:
\begin{displaymath}
  \tmat{a}{b}{c}{d}\cdot \tau= (a\tau+b)(c\tau+d)^{-1}
\end{displaymath}
for every~$\tau\in \Half_g$ and $\tmat{a}{b}{c}{d}\in G(\R)_+$,
where~$a,b,c$, and~$d$ are~$g\times g$ blocks.

Let~$(e_1,\ldots,e_{2g})$ be the canonical basis of~$V(\Q)$. Choose
positive integers $D_1|\cdots|D_g$ such that~$D_1=1$, and
let~$\Lambda_0\subset V(\Q)$ be the lattice generated
by~$e_1,\ldots,e_g, D_1 e_{g+1},\ldots, D_g e_{2g}$. Then~the type of
the polarization~$\psi$ on~$\Lambda_0$ is a product of cyclic groups
of order~$D_1,\ldots,D_g$; we also say that~$\psi$ is of
type~$(D_1,\ldots,D_g)$. Let $K$ be a compact open subgroup
of~$G(\A_f)$ that stabilizes $\Lambda_0\otimes\Zhat$, and let~$\Scal$
be the connected component of~$\Sh_K(\C)$ defined by the identity
matrix in~$G(\A_f)$. Then~$\Scal$ is identified with the
quotient~$\Gamma\backslash \Half_g$, where
\begin{displaymath}
  \Gamma = \GSp_{2g}(\Q)_+ \cap K = \Sp_{2g}(\Q) \cap K.
\end{displaymath}

By Proposition~\ref{prop:modular-int-AV}, $\Scal$ is a moduli space
for polarized abelian varieties with polarization
type~$(D_1,\ldots,D_g)$ and level~$K$ structure such that $H_1(A,\Z)$
is isomorphic to the standard polarized lattice
to~$(\Lambda_0,\psi)$. This modular interpretation coincides with the
classical one
\cite[§8.1]{birkenhake_ComplexAbelianVarieties2004}. Also, modular
forms on~$\Scal$ can be identified with Siegel modular forms in
the classical sense, as we mentioned in~§\ref{sub:Phiell} in the
case~$g=1$.


\paragraph{Siegel modular equations.}
We now focus on the special case given by
\begin{displaymath}
  g = 2,\ D_1 = D_2 = 1,\ \Lambda_0 = \Z^{2g},\ K =
  \GSp_{2g}(\Zhat).
\end{displaymath}
Then~$\Sh_K(\C)$ has only one connected component defined over~$\Q$,
and classifies principally polarized abelian surfaces
over~$\C$. Modular forms on~$\Sh_K$ are identified with classical
Siegel modular forms of level~$\Sp_4(\Z)$. As shown by Igusa
\cite{igusa_SiegelModularForms1962}, the graded $\Q$-algebra of these
modular forms is generated by four elements of respective weights 4,
6, 10, and~12. These generators can be taken to be
$I_4, I_6', I_{10}$, and~$I_{12}$ in Streng's notation
\cite[p.\,42]{streng_ComplexMultiplicationAbelian2010}. The function
field of $\Sh_K$ over~$\Q$ is therefore generated by the three
algebraically independent \emph{Igusa invariants}:
\begin{displaymath}
  j_1 = \dfrac{I_4 I_6'}{I_{10}},\quad j_2 =  
  \dfrac{I_4^2 I_{12}}{I_{10}^2}, \quad j_3 = \dfrac{I_4^5}{I_{10}^2}.
\end{displaymath}

Let~$\ell$ be a prime, and consider the Hecke correspondence of level
\begin{displaymath}
  \delta = \mat{\ell I_2}{0}{0}{\,I_2} \qquad\text{as a 4} \times \text{4 matrix in
    2}\times \text{2 blocks}.
\end{displaymath}
The group $K\cap\, \delta K \delta^{-1} \cap G(\Q)_+$ is usually
denoted by~$\Gamma^0(\ell)$, and the degree of~$H_\delta$ is
\begin{displaymath}
  \Heckedeg(\delta) = \ell^3 + \ell^2 + \ell + 1.
\end{displaymath}
The Hecke correspondence~$H_\delta$ is absolutely irreducible, and
describes all principally polarized abelian surfaces $\ell$-isogeous
to a given one; the degree of these isogenies is
$\isogdeg(\delta) = \ell^2$.  In this case, the
function~$j_{1,\delta}$ generates the function field on the Hecke
correspondence \cite[Lem.~4.2]{broker_ModularPolynomialsGenus2009}, so
that $d_1 = \Heckedeg(\delta)$ and $d_2 = d_3 = 1$, in the notation
of~§\ref{sub:modeq}. The modular equations from
Definition~\ref{def:modeq} are called the Siegel modular equations of
level~$\ell$ in Igusa invariants. They have been computed for $\ell = 2$ and $\ell=3$
\cite{milio_QuasilinearTimeAlgorithm2015}.

\subsection{Modular equations of Hilbert type for abelian surfaces}
\label{sub:hilbert}


\paragraph{Hilbert moduli spaces.}
Let~$F$ be a totally real number field of degree~$g$ over~$\Q$, and
let $B = F$ with trivial involution~$*$. The $\Q$-algebra~$B$ is
simple of type~(C). Let $V = F^2$, which is a $\Q$-vector space of
dimension~$2g$, and define the symplectic form~$\psi$ on~$V$ as follows:
\begin{displaymath}
  \forall a,b,c,d\in F,\ \psi\bigl((a,b),(c,d)\bigr) = \Tr_{F/\Q}(ad-bc).
\end{displaymath}
Then~$(V,\psi)$ is a faithful symplectic $(B,*)$-module, where~$B$
acts on~$V$ by multiplication. The associated algebraic group is
$G = \GL_2(F)$. The~$g$ real embeddings of~$F$ induce identifications
\begin{displaymath}
  V(\R) = (\R^2)^g \quad \text{and}\quad G(\R) = \prod_{i=1}^g \GL_2(\R).
\end{displaymath}
The subgroup~$G(\R)_+$ consists of matrices with totally positive
determinant.

There is a particular complex structure~$x_0\in G(\R)$ on
$V(\R)$ given by
\begin{displaymath}
  x_0 = \Bigl(\tmat{0}{1}{-1}{0}\Bigr)_{1\leq i\leq g}.
\end{displaymath}
Let~$X_+$ be the~$G(\R)_+$-conjugacy class of~$x_0$.  Then
$(G,X_+)$ is called a \emph{Hilbert Shimura datum}. Its
reflex field is~$\Q$: see
\cite[§X.4]{vandergeer_HilbertModularSurfaces1988} when
$g=2$, and
\cite[Ex.~12.4]{milne_IntroductionShimuraVarieties2005} in
general. The domain~$X_+$ can be identified with
$\Half_1^g$, where~$\Half_1$ is the complex upper
half-plane, endowed with the action of~$\GL_2(\R)_+$ on each
coordinate.


Let~$\Z_F$ be the integer ring of~$F$, and take
$\Lambda_0 = \Z_F\oplus\Z_F^\dual$, where~$\Z_F^\dual$ is the dual
of~$\Z_F$ with respect to the trace form. Then the stabilizer
of~$\Lambda_0$ in~$B$ is~$\Z_F$, and~$\psi$ is principal
on~$\Lambda_0$.  Let~$K$ be a compact open subgroup
of~$\GL(\Lambda_0\otimes\Zhat)$.

\begin{rem}
  In the Hilbert setting, the group~$\mu(\Gamma_c)$ is not equal
  to~$\mu(\mathcal{E})$ in general. For instance, if
  $K=\GL(\Lambda_0\otimes\Zhat)$, and $c=\tmat{1}{0}{0}{1}$, then
  \begin{displaymath}
    \Gamma_c = G(\R)_+\cap K = \{ g\in \GL(\Lambda_0)\st \det(g) \text{
      is totally positive}\},
  \end{displaymath}
  so~$\mu(\Gamma_c)$ is the set of totally positive units in~$\Z_F$. On
  the other hand,~$\mu(\mathcal{E})$ is the set of all squares of units. For
  instance, if $g=2$, then $\mu(\mathcal{E}) = \mu(\Gamma_c)$ if and only if the
  fundamental unit in~$\Z_F$ has negative norm.
\end{rem}

We now assume that~$K$ has been chosen in such a way that
\begin{equation}
  \label{eq:valid-K}
  G(\Q)_+\cap K = \bigl\{g\in \GL(\Lambda_0)\st \mu(g)\in \Z_F^{\times 2}\bigr\}.
\end{equation}
The Shimura variety~$\Sh_K(G,X_+)(\C)$ has several connected
components: the narrow class group of~$F$ is a quotient of
$\pi_0(\Sh_K(\C))$
\cite[Cor.~I.7.3]{vandergeer_HilbertModularSurfaces1988}.  Let $\Scal$
be the connected component defined by the identity matrix
in~$G(\A_f)$. Then there is a natural isomorphism
\begin{displaymath}
  \Scal = (G(\Q)_+\cap K) \backslash \Half_1^g \simeq
  \SL(\Z_F\oplus\Z_F^\dual)\backslash \Half_1^g.
\end{displaymath}
By Proposition~\ref{prop:modular-int-AV}, the component~$\Scal$
parametrizes principally polarized abelian varieties with real
multiplication by~$\Z_F$ and level~$K$ structure such that~$H_1(A,\Z)$
is isomorphic to the polarized lattice~$(\Lambda_0,\psi)$ with its
additional data. The modular forms of weight~$w$ on~$\Scal$ are
identified with the classical Hilbert modular forms of
weight~$(w,w,\ldots,w)$ for~$F$ and
level~$\SL(\Z_F\oplus\Z_F^\dual)$
\cite[§4]{freitag_HilbertModularForms1990}.


In the special case~$g = 2$, let $\Sigma = \{1,\sigma\}$,
where~$\sigma$ is the involution of~$V$ coming from real conjugation
in~$F$. On~$G(\R)_+$, the involution~$\sigma$ acts as permutation of
the two factors. Modular forms that are symmetric under~$\Sigma$ are
symmetric Hilbert modular forms in dimension 2 in the usual sense
\cite[§1.3]{bruinier_HilbertModularForms2008}.

\paragraph{Hilbert modular equations.} Let~$F$ be a real quadratic
field, and assume moreover that the fundamental unit of~$F$ has
negative norm; then~$K=\GL(\Lambda_0\otimes\Zhat)$
satisfies~\eqref{eq:valid-K}.
Let~$\beta\in \Z_F$ be totally positive and prime, and let
\begin{displaymath}
  \delta = \mat{\beta}{0}{0}{1} \in G(\A_f).
\end{displaymath}
The Hecke correspondence~$H_\delta$ is absolutely irreducible, has
degree $\Heckedeg(\delta) = N_{F/\Q}(\beta) + 1$, and parametrizes
isogenies of degree~$\isogdeg(\delta) = N_{F/\Q}(\beta)$. One can
check that~$H_\delta$ intersects~$\Scal\times \Scal$
nontrivially. Being able to consider this Hecke correspondence is the
reason for our different choice of~$G$ in~§\ref{sec:pel} compared
to~\cite[§8]{milne_IntroductionShimuraVarieties2005}.


As invariants on~$\Scal$, one possibility is to use the pullback of
Igusa invariants by the forgetful map to the Siegel threefold,
i.e.~the Siegel moduli space for~$g=2$
\cite{lauter_ComputingGenusCurves2011}. They are symmetric with
respect to $\Sigma$, and the equation relating these three invariants
is the equation of the associated \emph{Humbert surface}, the image of
the Hilbert surface~$\Scal$ inside the Siegel threefold. In this case,
the modular equations describe simultaneously $\beta$- and
$\sigma(\beta)$-isogenies \cite{milio_ModularPolynomialsHilbert2020}.

In special cases, the field of $\Sigma$-invariant modular functions
can be generated by two elements called \emph{Gundlach
  invariants}. This reduction of the number of variables is
interesting in practice. For instance, if $F = \Q(\sqrt{5})$, then the
graded $\Q$-algebra of symmetric Hilbert modular forms is free over
three generators~$F_2,F_6$, and~$F_{10}$ of respective weights~$2$,
$6$, and $10$ \cite{gundlach_BestimmungFunktionenZur1963}; therefore,
$L(\Scal)^\Sigma = \Q(g_1,g_2)$ where the Gundlach invariants~$g_1$
and~$g_2$ are defined by
\begin{displaymath}
  g_1 = \dfrac{F_2^5}{F_{10 }},\quad g_2 = \dfrac{F_2^2 F_6}{F_{10}}.
\end{displaymath}
Moreover, $g_1$ and~$g_2$ are algebraically independent.  The
associated modular equations are called the Hilbert modular equations
of level~$\beta$ in Gundlach invariants for~$F=\Q(\sqrt{5})$, and
have been computed up to $N_{F/\Q}(\beta) = 59$
\cite{milio_DatabaseModularPolynomials}. They also describe both
$\beta$- and $\sigma(\beta)$-isogenies.

\section{Degree estimates for modular equations}
\label{sec:degree-bound}

We fix a PEL setting as in~§\ref{sub:modeq}; in particular we make a
choice of invariants~$j_1,\ldots,j_{n+1}$ on the Shimura
components~$\Scal$ and~$\Tcal$. Let $\delta\in G(\A_f)$, and assume
that the Hecke correspondence~$H_\delta$
intersects~$\Scal\times \Tcal$ nontrivially.  In
Definition~\ref{def:modeq}, we defined the modular equations
$\Psi_{\delta,1},\ldots,\Psi_{\delta,n+1}$; they are multivariate
polynomials in the variables $Y_1,\ldots, Y_{n+1}$
describing~$H_\delta$ and its conjugates under~$\Sigma$. Their
coefficients are uniquely determined rational fractions in
$L(J_1,\ldots,J_n)[J_{n+1}]$ of degree at most~$e$ in~$J_{n+1}$, where
the integer~$e$ is defined as in equation~\eqref{eq:jn+1}. The goal of
this section is to prove the upper bounds on the degree of the
coefficients of the modular equations~$\Psi_{\delta,m}$ given in the
first part of Theorem~\ref{thm:main}. We also give explicit variants
in the case of modular equations for abelian surfaces. As indicated in
the introduction, the proof works by identifying a denominator of the
modular equations, then by analyzing the degree of the rational
fractions we obtain when rewriting a quotient of modular forms of
bounded weights in terms of the invariants~$j_1,\ldots,j_{n+1}$.

\subsection{The common denominator of \texorpdfstring{$\Psi_{\delta,m}$}{}}
\label{sub:denom}


We keep the notation used in~§\ref{sub:modeq}: in particular
\begin{displaymath}
  K' = K\cap \,\delta K \delta^{-1},\quad K_0 =  K \rtimes\Sigma,
\end{displaymath}
and~$K''$ is a normal subgroup of finite index in~$K$, contained
in~$K'$ and stabilized by~$\Sigma$. The natural action of~$K_0$ on
modular functions of level~$K''$ extends to an action on modular forms.

For each $1\leq i\leq {n+1}$, fix a nonzero modular form~$\chi_i$
invariant under~$\Sigma$ and defined over~$L$ such that $\chi_i j_i$
is again a modular form (i.e.~has no poles); we say that~$\chi_i$ is a
\emph{denominator} of~$j_i$. This is possible by
Proposition~\ref{prop:mf-symmetric}. For each~$i$, the function
\begin{displaymath}
  \chi_{i,\delta}\defby [x,g] \mapsto \chi_i([x,g \delta])
\end{displaymath}
is a modular form of weight~$\wt(\chi_i)$ on the preimage of~$\Scal$
in~$\Sh_{K'}(\C)$. We define the functions~$g_{\delta,m}$ on~$\Scal$ for
$1\leq m\leq n+1$ as follows:
\begin{displaymath}
  \begin{aligned}
    g_{\delta,m} &= \prod_{i=1}^{m} \prod_{\gamma\in K_0/K'}
      \act{\gamma}{\chi_{i,\delta}}.
  \end{aligned}
\end{displaymath}


\begin{lem}
  \label{lem:denom-weight} For every $1\leq m\leq n+1$,
  the function~$g_{\delta,m}$ is a nonzero symmetric
  modular form on~$\Scal$, and
  \begin{displaymath}
    \wt(g_{\delta,m}) = (\#\Sigma) \Heckedeg(\delta) \sum_{i=1}^{m} \wt(\chi_i).
  \end{displaymath}
\end{lem}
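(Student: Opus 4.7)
The plan is to verify three things: that $g_{\delta,m}$ is a well-defined modular form on $\Scal$, that it is nonzero and symmetric under $\Sigma$, and that its weight has the claimed value. All three will follow from a direct computation once one unpacks the definitions, so the main obstacle is bookkeeping rather than any nontrivial mathematical input.

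First I would check that each factor $\act{\gamma}{\chi_{i,\delta}}$ makes sense and is a nonzero modular form of weight $\wt(\chi_i)$ on the preimage of $\Scal$ in $\Sh_{K''}(\C)$. The transformation law $\chi_{i,\delta}([xk_\infty,g]) = \chi_i([xk_\infty, g\delta]) = \rho(k_\infty)^{\wt(\chi_i)} \chi_{i,\delta}([x,g])$ shows that $\chi_{i,\delta}$ has the correct weight on level $K' = K\cap\,\delta K\delta^{-1}$, because $K$-invariance of $\chi_i$ forces $\chi_{i,\delta}$ to be invariant under the right action of any $k'\in K$ with $k'\delta \in \delta K$, i.e.~of any $k'\in K'$. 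Using the assumption that $\Sigma$ fixes $\rho$ and $K_\infty$, the same transformation law is preserved under the action of any $\gamma\in K_0$, so $\act{\gamma}{\chi_{i,\delta}}$ is a modular form of weight $\wt(\chi_i)$ on the preimage of $\Scal$ in $\Sh_{K''}(\C)$. Nonvanishing of $\chi_i$ and the fact that the $K_0$-action permutes nonzero modular forms give nonvanishing of each factor, hence of the product.

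Next I would verify invariance of $g_{\delta,m}$ under~$K_0$. For any $\gamma_0 \in K_0$, since the $K_0$-action is a left action,
\begin{displaymath}
  \act{\gamma_0}{g_{\delta,m}} = \prod_{i=1}^{m} \prod_{\gamma\in K_0/K'}
  \act{\gamma_0\gamma}{\chi_{i,\delta}},
\end{displaymath}
and left multiplication by $\gamma_0$ permutes the cosets in $K_0/K'$, so the product is unchanged. Invariance under $K\rtimes\{1\}$ shows that $g_{\delta,m}$ descends to a modular form on the preimage of $\Scal$ in $\Sh_K(\C)$, and invariance under $\{1\}\rtimes\Sigma$ shows that this descent is symmetric under $\Sigma$. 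It is defined over $L$ because each $\chi_i$ is.

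Finally, the weight computation is immediate: since each factor contributes $\wt(\chi_i)$ and the number of cosets is $\#(K_0/K') = \#\Sigma \cdot [K:K'] = \#\Sigma \cdot \Heckedeg(\delta)$, summing over $1\leq i\leq m$ gives exactly
\begin{displaymath}
  \wt(g_{\delta,m}) = (\#\Sigma)\,\Heckedeg(\delta)\sum_{i=1}^m \wt(\chi_i),
\end{displaymath}
as claimed. The only small subtlety worth flagging in the write-up is the check that the $K_0$-action preserves the weight character, which is where the hypothesis that $\Sigma$ fixes $\rho$ (stated at the end of §\ref{sub:forms}) gets used.
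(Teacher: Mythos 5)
Your proof is correct and takes essentially the same route as the paper's: the paper likewise observes that the $K_0$-action permutes the $(\#\Sigma)\,\Heckedeg(\delta)$ factors indexed by $K_0/K'$, giving symmetry and descent to $\Scal$, and reads off the weight as $\#(K_0/K')\sum_{i=1}^m\wt(\chi_i)$. The one point where the paper is slightly more careful is nonvanishing: since the preimage of~$\Scal$ in~$\Sh_{K''}(\C)$ may be disconnected, it is not enough that each factor is a nonzero form — one needs each $\act{\gamma}{\chi_{i,\delta}}$ to be nonzero on \emph{every} connected component above~$\Scal$, which is exactly what the paper asserts and what makes the product nonzero.
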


\begin{proof}
  By construction, the function~$g_{\delta,m}$ is a modular form of
  level~$K''$ and weight $\sum_{i=1}^m \# (K_0/K')\, \wt(\chi_i)$. We
  have $\# (K_0/K') = (\#\Sigma) \Heckedeg(\delta)$. Each modular
  form~$\act{\gamma}{\chi_{i,\delta}}$ is nonzero on every connected
  component of~$\Sh_{K''}(\C)$ above~$\Scal$, hence~$g_{\delta,m}$ is
  nonzero as well.
  
  Acting by an element of~$K_0$ permutes the factors in the product
  defining~$g_{\delta,m}$, so~$g_{\delta,m}$ is in fact a symmetric
  modular form on~$\Scal$.
\end{proof}


\begin{prop}
  \label{prop:denom-valid} For every $1\leq m\leq n+1$, the
  coefficients of the multivariate
  polynomial~$g_{\delta,m} \Psi_{\delta,m}$ are symmetric modular
  forms on~$\Scal$.
\end{prop}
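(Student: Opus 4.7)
The plan is to exploit the definition of $\chi_i$ as a denominator of $j_i$: the product $\chi_i j_i$ is a modular form on $\Scal$, and consequently $\chi_{i,\delta} j_{i,\delta} = (\chi_i j_i)_\delta$ is a modular form on the preimage of $\Scal$ in $\Sh_{K'}(\C)$. Since the $K_0$-action respects multiplication of modular forms, for every $\gamma \in K_0$ the products $\act{\gamma}{\chi_{i,\delta}}$ and $\act{\gamma}{(\chi_{i,\delta} j_{i,\delta})}$ are modular forms on the preimage of $\Scal$ in $\Sh_{K''}(\C)$.

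First I would expand the definition of $\Psi_{\delta,m}$ as a sum indexed by $\gamma \in K_0/K_{m-1}$ of products of linear factors of the form $(Y_i - \act{\gamma_i}{j_{i,\delta}})$. Each such factor, when paired with a companion $\act{\gamma_i}{\chi_{i,\delta}}$, becomes
\begin{equation*}
  \act{\gamma_i}{\chi_{i,\delta}} \bigl(Y_i - \act{\gamma_i}{j_{i,\delta}}\bigr)
  = \act{\gamma_i}{\chi_{i,\delta}}\, Y_i - \act{\gamma_i}{\bigl(\chi_{i,\delta} j_{i,\delta}\bigr)},
\end{equation*}
which is a polynomial in $Y_i$ with modular form coefficients.

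Next I would check that $g_{\delta,m}$ carries enough factors of the required type. For each $1 \leq i \leq m$, every coset representative $\gamma_i$ appearing in the expansion of $\Psi_{\delta,m}$ lies in $K_0/K_i$, and since $K' \subset K_i$ each such $\gamma_i$ is represented by finitely many classes of $K_0/K'$ appearing in $g_{\delta,m}$. After distributing one $\act{\gamma'}{\chi_{i,\delta}}$ factor to each linear factor of $\Psi_{\delta,m}$, the leftover factors from $g_{\delta,m}$ are themselves modular forms; their product multiplied by a polynomial with modular form coefficients still has modular form coefficients. Summing over the outer index $\gamma \in K_0/K_{m-1}$ preserves this property, so the coefficients of $g_{\delta,m}\Psi_{\delta,m}$ are modular forms on the preimage of $\Scal$ in $\Sh_{K''}(\C)$.

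To descend these modular forms to symmetric modular forms on $\Scal$, I would invoke $K_0$-invariance: $g_{\delta,m}$ is manifestly invariant under $K_0$ because it is a product over a full set of $K_0/K'$-coset representatives, and the coefficients of $\Psi_{\delta,m}$ are $K_0$-invariant by Proposition~\ref{prop:modeq-coeffs}. Therefore the coefficients of $g_{\delta,m}\Psi_{\delta,m}$ are $K_0$-invariant modular forms of level $K''$, hence symmetric modular forms on $\Scal$ by the defining properties from §\ref{sub:forms}. The main technical point will be the bookkeeping of coset representatives in the distribution step: one needs to verify that the pairing between the $\gamma_i$ appearing in $\Psi_{\delta,m}$ and the $\gamma' \in K_0/K'$ appearing in $g_{\delta,m}$ can be done coherently, or equivalently that the intermediate modular form expressions one writes down are well-defined independent of the choice of representatives.
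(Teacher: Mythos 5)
Your proposal is correct and follows essentially the same route as the paper: pair each factor $\bigl(Y_i - \act{\gamma_i}{j_{i,\delta}}\bigr)$ with the matching translate $\act{\gamma_i}{\chi_{i,\delta}}$ occurring in $g_{\delta,m}$ (so that the product is $\act{\gamma_i}{\chi_{i,\delta}}\,Y_i - \act{\gamma_i}{(\chi_{i,\delta}j_{i,\delta})}$), observe that the leftover factors of $g_{\delta,m}$ are modular forms, and descend to symmetric forms on $\Scal$ by $K_0$-invariance. The bookkeeping point you defer is exactly the paper's one-line observation and is immediate: within a single term of the sum the cosets $\gamma_i K_i$ (resp.\ $\gamma\gamma_m K_m$) are pairwise disjoint, hence their representatives lie in pairwise distinct classes of $K_0/K'$, so for each fixed $i$ the full product $\prod_{\gamma'\in K_0/K'}\act{\gamma'}{\chi_{i,\delta}}$ supplies a distinct compensating factor for each linear factor.
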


\begin{proof}
  By Definition~\ref{def:modeq}, the polynomial~$\Psi_{\delta,m}$ is a sum of terms of the form
  \begin{align*}
      \left(
        \prod_{i=1}^{m-1} \prod_{
        \gamma_i}
        \Bigl(Y_i - \act{\gamma_i}{j_{i,\delta}} \Bigr)
      \right)
      \prod_{\gamma_m \in K_{m-1}/K_m} \Bigl(Y_m -
        \act{\gamma\gamma_m}{j_{m,\delta}} \Bigr)
  \end{align*}
  where~$\gamma\in K_0$ is fixed, and the middle product is over all
  $\gamma_i\in K_0/K_i$ such that $\gamma_i = \gamma$
  modulo~$K_{i-1}$, but $\gamma_i\neq \gamma$ modulo~$K_i$. In this
  expression, all the cosets~$\gamma_i$ and~$\gamma\gamma_m$ are
  simultaneously disjoint as subsets of~$K_0/K'$. Each denominator is
  accounted for by some factor in the product defining~$g_{\delta,m}$,
  so the coefficients of~$g_{\delta,m}\Psi_{\delta,m}$ are modular
  forms.
\end{proof}


When the modular functions~$j_1,\ldots,j_{n+1}$ have similar
denominators, it is possible to make a better choice
for~$g_{\delta,m}$.

\begin{prop}
  \label{prop:better-g}
  Assume that there exists a modular form~$\chi$ on~$\Scal$ such that
  for every~$i$, we have $\chi_i = \chi^{\alpha_i}$ for some
  integer~$\alpha_i\geq 0$. Let $1\leq m\leq n+1$, and define
  \begin{displaymath}
    g_{\delta,m} = \Bigl(\prod_{\gamma\in K_0} \act{\gamma}{\chi_\delta}\Bigr)^\alpha,
    \qquad\text{where } \alpha = \max_{1\leq i\leq m} \alpha_i.
  \end{displaymath}
  Then~$g_{\delta,m}$ is a nonzero symmetric modular form on~$\Scal$,
  and
  \begin{displaymath}
    \wt(g_{\delta,m}) = (\#\Sigma) \, \Heckedeg(\delta)\, \alpha \wt(\chi).
  \end{displaymath}
  Moreover, the coefficients of~$g_{\delta,m}\Psi_{\delta,m}$ are
  symmetric modular forms on~$\Scal$.
\end{prop}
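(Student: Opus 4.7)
The plan is to adapt the arguments from Lemma~\ref{lem:denom-weight} and Proposition~\ref{prop:denom-valid} to this refined denominator, exploiting the assumption $\chi_i=\chi^{\alpha_i}$ in order to reduce the weight of the denominator from $\sum_i \wt(\chi_i)$ to $\alpha\wt(\chi)$. First I would interpret the product in the definition of $g_{\delta,m}$: since $\chi$ is a symmetric modular form of level $K$, the function $\chi_\delta$ is invariant under $K'=K\cap\delta K\delta^{-1}$, so $\prod_{\gamma\in K_0}\act{\gamma}{\chi_\delta}$ is understood as a product over the finite coset space $K_0/K'$, giving $(\#\Sigma)\Heckedeg(\delta)$ factors.

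For the first half of the statement, each $\act{\gamma}{\chi_\delta}$ is a nonzero modular form of weight $\wt(\chi)$ on the preimage of~$\Scal$ in $\Sh_{K''}(\C)$, so $g_{\delta,m}$ is a nonzero modular form of weight $(\#\Sigma)\Heckedeg(\delta)\,\alpha\wt(\chi)$. Left multiplication by any element of $K_0$ permutes the cosets of $K'$ in $K_0$, so it merely permutes the factors and leaves $g_{\delta,m}$ invariant; hence $g_{\delta,m}$ descends to a symmetric modular form on~$\Scal$, exactly as in the proof of Lemma~\ref{lem:denom-weight}.

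For the second half, I would revisit the expansion used in Proposition~\ref{prop:denom-valid}: for each $\gamma \in K_0/K_{m-1}$, the corresponding summand of $\Psi_{\delta,m}$ is a product of linear factors $(Y_i-\act{\gamma_i}{j_{i,\delta}})$ and $(Y_m-\act{\gamma\gamma_m}{j_{m,\delta}})$ whose indices~$\gamma_i$ and $\gamma\gamma_m$ define pairwise disjoint subsets of $K_0/K'$. Consequently, for every coset $\gamma_0\in K_0/K'$ there is at most one index~$i$ for which $\gamma_0\cdot j_{i,\delta}$ appears as a linear factor in the given summand. Since $\chi_i=\chi^{\alpha_i}$, multiplying $\act{\gamma_0}{j_{i,\delta}}$ by $(\act{\gamma_0}{\chi_\delta})^{\alpha_i}$ produces a modular form, and because $\alpha \geq \alpha_i$, multiplying by $(\act{\gamma_0}{\chi_\delta})^{\alpha}$ still gives a modular form. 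Summing over $\gamma$ and collecting monomials in $Y_1,\ldots,Y_m$, we conclude that each coefficient of $\Psi_{\delta,m}$ has denominator dividing $\prod_{\gamma_0\in K_0/K'}(\act{\gamma_0}{\chi_\delta})^{\alpha}=g_{\delta,m}$, so the coefficients of $g_{\delta,m}\Psi_{\delta,m}$ are modular forms; their symmetry follows from the $K_0$-invariance already established for $g_{\delta,m}$ together with Proposition~\ref{prop:modeq-coeffs}.

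The only real subtlety is the disjointness step, which is inherited verbatim from the proof of Proposition~\ref{prop:denom-valid}; once this is acknowledged, the improvement of the denominator from $\sum\wt(\chi_i)$ to $\alpha\wt(\chi)$ is a simple accounting.
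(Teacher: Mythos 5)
Your proposal is correct and follows exactly the route the paper intends: the paper omits this proof, saying it is similar to Proposition~\ref{prop:denom-valid}, and your argument is precisely that adaptation — interpreting the product over $K_0/K'$, computing the weight and symmetry as in Lemma~\ref{lem:denom-weight}, and using the simultaneous disjointness of the cosets in each summand (now crucially across all indices $i$, since only one factor $(\act{\gamma_0}{\chi_\delta})^{\alpha}$ is available per $K'$-coset) together with $\alpha\geq\alpha_i$ to clear denominators. No gaps; this is the intended proof.
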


The proof is similar to that of Proposition~\ref{prop:denom-valid}, and
omitted.

\subsection{Writing quotients of modular forms in terms of invariants}
\label{sub:rewrite}

Let~$f/g$ be a quotient of symmetric modular forms of weight~$w$
on~$\Scal$. We show that when we rewrite such a quotient in terms of
the invariants~$j_1,\ldots,j_{n+1}$, the degree of the rational
fractions we obtain is bounded linearly in~$w$. To make the
proportionality constant explicit, we define the \emph{symmetric
  geometric complexity} of our invariants as follows.

\begin{defn}
  \label{def:gc}
  Let~$f_k$ for $1\leq k\leq r$ be nonzero generators over~$L$ for the
  graded ring of symmetric modular forms on~$\Scal$, with respective
  weights~$w_k$. For each $1\leq k\leq r-1$, let~$\beta_k\geq 1$ be
  the minimal integer such that
  \begin{displaymath}
    \beta_k w_k \in \Z w_{k+1} + \cdots + \Z w_r. 
  \end{displaymath}
  We can find nonzero modular forms
  $\lambda_k,\, \xi_k\in L[f_{k+1},\ldots,f_r]$ such that
  $\wt(\lambda_k) - \wt(\xi_k) = \beta_k w_k$. For
  every~$1\leq k\leq r-1$, the
  function~$\xi_k f_k^{\beta_k}/\lambda_k$ is a quotient of two
  symmetric modular forms of the same weight on~$\Scal$; hence there
  exist polynomials~$P_k,Q_k\in L[J_1,\ldots,J_{n+1}]$ such that
  \begin{displaymath}
    \dfrac{\xi_k f_k^{\beta_k}}{\lambda_k}
    = \dfrac{P_k(j_1,\ldots, j_{n+1})}{Q_k(j_1,\ldots,j_{n+1})}.
  \end{displaymath}
  Denote the total degrees of~$P_k$ and~$Q_k$ by~$\deg(P_k)$
  and~$\deg(Q_k)$ respectively. We define the \emph{symmetric
    geometric complexity} of~$j_1,\ldots,j_{n+1}$ relative to the
  choice of~$f_k,\lambda_k,\psi_k,P_k,Q_k$ to be the positive rational
  number given by, either
  \begin{enumerate}
  \item \label{case:gc-simple}
    \begin{displaymath}
      \left(1+\max_{1\leq k\leq r-1}
        \frac{\wt(\xi_k)}{\beta_k w_k}\right) \max_{1\leq k\leq r-1}
      \frac{\deg(P_k)}{\beta_k w_k + \wt(\xi_k)}\,,
    \end{displaymath}
    
    if the following conditions are satisfied: for
    every~$1\leq k\leq r-1$, the modular forms~$\lambda_k$ and~$\xi_k$
    are powers of~$f_r$ and~$f_{r-1}$ respectively (in
    particular~$\xi_{r-1}=1)$, and~$Q_{k}=1$; or
  \item \label{case:gc-gen}
    \begin{displaymath}
      \sum_{k= 1}^{r-1} \left( \dfrac{1}{\beta_k w_k} \max
      \bigl\{\deg(P_k),\deg(Q_k) \bigr\} \prod_{l=1}^{k-1}\left(1 +
        \dfrac{\wt(\xi_l)}{\beta_l w_l}\right) \right),
    \end{displaymath}
    otherwise.
  \end{enumerate}
  Note that formula~\ref{case:gc-simple}, when it applies, yields a
  smaller result than formula~\ref{case:gc-gen}.
   
  The \emph{symmetric geometric complexity} of~$j_1,\ldots,j_{n+1}$,
  denoted by~$\SGC(j_1,\ldots,j_{n+1})$, is the infimum of this
  quantity over all possible choices of modular
  forms~$f_k,\lambda_k,\xi_k$ and polynomials~$P_k,Q_k$.
\end{defn}

Given Definition~\ref{def:gc}, explicit upper bounds on the geometric
complexity are easy to obtain if a generating set of modular forms is
known. Note that the symmetric geometric complexity is invariant under
permutations of the invariants~$j_1,\ldots,j_{n+1}$, in contrast with
their \emph{geometric complexity} to be defined later, which takes
into account the fact that~$j_{n+1}$ is considered differently in
equation~\eqref{eq:jn+1}.

\begin{prop}
  \label{prop:rewrite}
  Let~$w\geq 0$, let $f,\, g$ be symmetric modular forms on~$\Scal$ of
  weight~$w$, and assume that~$g$ is nonzero. Then there exist
  polynomials~$P,\, Q\in L[J_1,\ldots, J_{n+1}]$ of total degree at
  most~$\SGC(j_1,\ldots,j_{n+1})w$ such that
  \begin{displaymath}
    \dfrac{f}{g} = \dfrac{P(j_1,\ldots,j_{n+1})}{Q(j_1,\ldots,j_{n+1})}.
  \end{displaymath}
  Moreover,~$Q$ can be chosen independently of~$f$.
\end{prop}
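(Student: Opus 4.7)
The plan is to fix a choice of generators $f_1, \ldots, f_r$ of the graded $L$-algebra of symmetric modular forms on~$\Scal$, together with the auxiliary data $\beta_k, \lambda_k, \xi_k, P_k, Q_k$ appearing in Definition~\ref{def:gc} and (approximately) realising the infimum $\SGC(j_1,\ldots,j_{n+1})$, and then to produce $P$ and $Q$ by an explicit iterative elimination of $f_1,\ldots,f_{r-1}$. By Proposition~\ref{prop:mf-symmetric}, I may write $f = F(f_1,\ldots,f_r)$ and $g = G(f_1,\ldots,f_r)$, where $F, G \in L[Z_1,\ldots,Z_r]$ are weighted-homogeneous of degree~$w$ with $\wt(Z_k) = w_k$.

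For $k = 1, 2, \ldots, r-1$ in turn, I use the identity $\xi_k f_k^{\beta_k}/\lambda_k = P_k/Q_k$ in $L(\Scal)$ to substitute
\begin{displaymath}
  f_k^{\beta_k} = \frac{\lambda_k\, P_k(j_1,\ldots,j_{n+1})}{\xi_k\, Q_k(j_1,\ldots,j_{n+1})}
\end{displaymath}
inside $F$ and $G$, and then multiply both by $(\xi_k Q_k)^{N_k}$, where $N_k$ is the maximum over monomials of weight~$w^{(k-1)}$ of the exponent $\lfloor \alpha_k/\beta_k\rfloor$ of $f_k^{\beta_k}$ that can appear. Thus $N_k$ depends only on~$w$, the multiplication clears every denominator introduced by the substitution, and because $\lambda_k, \xi_k \in L[f_{k+1},\ldots,f_r]$ the round does not reintroduce high powers of $f_1,\ldots,f_{k-1}$. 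After all $r-1$ rounds I obtain weighted-homogeneous polynomials $\tilde F$ and $\tilde G$ of common weight $W \leq w\prod_{k=1}^{r-1}(1+\wt(\xi_k)/(\beta_k w_k))$, each a $\Z$-linear combination of monomials $Z_1^{s_1}\cdots Z_r^{s_r}$ with $s_k<\beta_k$ for $k<r$ and coefficients polynomial in $j_1,\ldots,j_{n+1}$, such that $f/g = \tilde F(f_1,\ldots,f_r)/\tilde G(f_1,\ldots,f_r)$.

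The decisive step is a uniqueness-of-reduced-form lemma: two monomials $Z_1^{s_1}\cdots Z_r^{s_r}$ and $Z_1^{s'_1}\cdots Z_r^{s'_r}$ with $s_k, s'_k < \beta_k$ for $k<r$ and equal weights must coincide. Indeed, if $k_0$ is the smallest index where the two tuples differ, then $(s_{k_0}-s'_{k_0})w_{k_0} \in \Z w_{k_0+1}+\cdots+\Z w_r$, and by the minimality defining $\beta_{k_0}$ this forces $\abs{s_{k_0}-s'_{k_0}}\geq \beta_{k_0}$, contradicting $s_{k_0}, s'_{k_0} < \beta_{k_0}$. Consequently $\tilde F$ and $\tilde G$ are both a single common monomial $Z_1^{s_1}\cdots Z_r^{s_r}$ multiplied by polynomials $P, Q \in L[J_1,\ldots,J_{n+1}]$; the common monomial cancels, yielding $f/g = P(j_1,\ldots,j_{n+1})/Q(j_1,\ldots,j_{n+1})$.

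It remains to track degrees in $J_1,\ldots,J_{n+1}$. At round~$k$, multiplying by $(\xi_k Q_k)^{N_k}$ and performing the substitution adds at most $N_k\max\{\deg P_k,\deg Q_k\}$ to the degree, and with $N_k \leq w^{(k-1)}/(\beta_k w_k)$ and $w^{(k)} = w^{(k-1)}(1+\wt(\xi_k)/(\beta_k w_k))$ a direct computation gives formula~(2) of Definition~\ref{def:gc} times~$w$ as an upper bound for the final degree. In the special configuration of case~(1), where all $Q_k$ equal~$1$ and all $\xi_k$ are powers of the single form~$f_{r-1}$, a single multiplication by an appropriate power of $f_{r-1}$ clears every denominator at once; the constraint $\sum_k \alpha_k w_k = w$ on each monomial then turns the sum in the degree estimate into a maximum, yielding the sharper bound of formula~(1). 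Because each $N_k$ depends only on~$w$, the polynomial $Q$ obtained is a function of $g$ and $w$ alone, which settles the final assertion. The main obstacle is precisely the uniqueness-of-reduced-form lemma: without the minimality clause defining $\beta_k$, the $f_k$'s would not cancel cleanly in $\tilde F/\tilde G$.
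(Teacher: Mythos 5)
Your proposal is correct and takes essentially the same route as the paper's proof: express $f$ and $g$ in the generators $f_k$, eliminate $f_1,\ldots,f_{r-1}$ iteratively through the relations $\xi_k f_k^{\beta_k}/\lambda_k = P_k/Q_k$, clear denominators by multiplying numerator and denominator by the same powers of $\xi_k Q_k$, and run the same weight/degree bookkeeping in the two cases of Definition~\ref{def:gc}. The only difference is cosmetic: the paper removes the residual power $f_k^{s_k}$ at each round by an exact division justified by weighted homogeneity, whereas you keep the reduced exponents $s_k<\beta_k$ and cancel the common monomial at the very end via your uniqueness-of-reduced-form lemma, which rests on the same minimality property of $\beta_k$ that the paper uses.
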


\begin{proof}
  We keep the notation used in~Definition~\ref{def:gc}, and make a
  choice of generators~$f_k$ for~$1\leq k\leq r$, modular forms
  $\lambda_k,\xi_k$ for~$1\leq k\leq r-1$, and
  polynomials~$P_k,Q_k\in L[J_1,\ldots,J_{n+1}]$
  for~$1\leq k\leq r-1$. Let~$C$ be symmetric geometric complexity
  of~$j_1,\ldots,j_{n+1}$ relative to this choice.

  Let~$f$,~$g$ be as in the proposition.  Then~$f$ and~$g$ can be
  expressed as a sum of monomial terms of the form
  \begin{displaymath}
    c f_1^{\alpha_1}\cdots f_r^{\alpha_r} \qquad\text{with } c\in L \text{ and } \sum_{k=1}^r
    \alpha_k w_k = w.
  \end{displaymath}
  We give algorithms to rewrite the fraction~$P/Q = f/g$ (currently a
  rational fraction in terms of the modular forms~$f_k$) as a fraction
  of invariants, and bound the total degree of the output.

  \paragraph{Case~\ref{case:gc-simple} of~Definition~\ref{def:gc}.} We
  assume that~$\lambda_k$ and~$\xi_k$ are powers of~$f_r$
  and~$f_{r-1}$ respectively for every $1\leq k\leq r-1$. In this
  case, for each~$1\leq k\leq r-2$, the integer~$\beta_k$ can be seen
  as the order of~$w_k$ in the group~$\Z/(\Z w_{r-1}+\Z w_r)$. We can
  write 
  \begin{displaymath}
    w = \sum_{k=1}^{r-2} s_k w_k \quad(\text{mod } \Z w_{r-1}+ \Z w_r)
  \end{displaymath}
  for some integers $0\leq s_k < \beta_k$, and this determines the
  integers~$s_k$ uniquely (if such a linear combination vanishes,
  considering the smallest nonzero~$s_k$ yields a contradiction). Then
  each monomial appearing in~$P$ and~$Q$ is divisible by
  $f_1^{s_1}\cdots f_{r-2}^{s_{r-2}}$. After simplifying by this
  common factor, we can assume that the common weight~$w$ of~$P$
  and~$Q$ satisfies~$w\in \Z w_{r-1}+\Z w_r$. Then, for each
  $1\leq k\leq r-2$, the exponent of~$f_k$ in each monomial of~$P$
  and~$Q$ is divisible by~$\beta_k$.
  For convenience, write
  \begin{displaymath}
    a = \max_{1\leq k\leq r-1} \frac{\wt(\xi_k)}{\beta_kw_k}.
  \end{displaymath}
  In order to rewrite~$P/Q$ in terms of invariants, we proceed as
  follows.
  \begin{enumerate}
  \item \label{step:case1-M} Multiply~$P$ and~$Q$ by~$f_{r-1}^{\floor{aw/\wt(f_{r-1})}}$.
  \item \label{step:case1-R}
    For each $1\leq k\leq r-2$, replace each occurence
    of~$f_k^{\beta_k}$ by~$\lambda_k P_k/\xi_k$ in~$P$ and~$Q$.
  \item \label{step:case1-D} Let~$0\leq s_{r-1}< \beta_{r-1}$ be such
    that~$w = s_{r-1}w_{r-1}\mod w_r$, and divide~$P$ and~$Q$
    by~$f_{r-1}^{s_{r-1}}$.
  \item \label{step:case1-R2} Replace each occurence of~$f_{r-1}^{\beta_{r-1}}$
    by~$\lambda_{r-1} P_{r-1}$ in~$P$ and~$Q$.
  \item \label{step:case1-D2} Finally, divide~$P$ and~$Q$
    by~$f_r^{(w-s_{r-1}w_{r-1})/w_r}$.
  \end{enumerate}

  This algorithm runs independently on each monomial of~$P$
  and~$Q$. Let~$M = c\prod_{k=1}^r f_k^{\alpha_k}$, with~$c\in L$, be
  such a monomial after step~\ref{step:case1-M}. Let us show that the
  exponent of~$f_{r-1}$ in~$M$ remains nonnegative after
  step~\ref{step:case1-R}. In this step, we introduce a denominator
  given by
  \begin{displaymath}
    \prod_{k=1}^{r-2} \xi_k^{\alpha_k/\beta_k} = \prod_{k=1}^{r-2}
    f_{r-1}^{\frac{\wt(\xi_k)\alpha_k}{\wt(f_{r-1})\beta_k}}.
  \end{displaymath}
  We have
  \begin{displaymath}
    \sum_{k=1}^{r-2} \frac{\wt(\xi_k)\alpha_k}{\wt(f_{r-1})\beta_k}
    \leq a \sum_{k=1}^{r-2} \frac{\alpha_k w_k}{\wt(f_{r-1})}
    \leq \frac{aw}{\wt(f_{r-1})},
  \end{displaymath}
  hence
  \begin{displaymath}
    \sum_{k=1}^{r-2} \frac{\wt(\xi_k)\alpha_k}{\wt(f_{r-1})\beta_k}
    \leq \floor{\frac{aw}{\wt(f_{r-1})}} \leq \alpha_{r-1} \qquad \text{by step~\ref{step:case1-M}}
  \end{displaymath}
  because the left hand side is an integer. Therefore, at the end of
  step~\ref{step:case1-R},~$M$ belongs to the polynomial
  ring~$L[J_1,\ldots,J_{n+1}][f_{r-1},f_r]$. Hence, we
  have~$M\in L[J_1,\ldots,J_{n+1}][f_{r-1}^{\beta_{r-1}},f_r]$ after
  step~\ref{step:case1-D}, and finally~$M\in L[J_1,\ldots,J_{n+1}]$
  after step~\ref{step:case1-D2}. 
  
  It remains to bound the total degree of~$M$ after
  step~\ref{step:case1-D2}. To do this, we consider the total weight
  of~$M$ in~$f_1,\ldots,f_{r-1}$. For each~$1\leq k\leq r-1$, the
  modular form~$\lambda_k$ is a power of~$f_r$; hence
  replacing~$f_k^{\beta_k}$ by~$\lambda_k P_k/\xi_k$ in
  steps~\ref{step:case1-R} or~\ref{step:case1-R2} reduces this weight
  by~$\beta_k w_k + \wt(\xi_k)$, and increases the total degree of~$M$
  in~$J_1,\ldots,J_{n+1}$ by at most~$\deg(P_k)$. At the beginning of
  step~\ref{step:case1-R}, the total weight of~$M$
  in~$f_1,\ldots,f_{r-1}$ is at most $(1+a)w$. Therefore the total
  degree of~$M$ in~$J_1,\ldots,J_{n+1}$ at the end of the algorithm is
  bounded above by
  \begin{displaymath}
    (1+a)w \max_{1\leq k\leq r-1} \frac{\deg(P_k)}{\beta_k w_k+\deg(\xi_k)} = Cw.
  \end{displaymath}

  \paragraph{Case~\ref{case:gc-gen} of~Definition~\ref{def:gc}.} In
  the general case, we perform replacements and simplifications in a
  sequential way.

  We start by defining integers~$z_k, d_k$ for~$0\leq k\leq r-1$
  and~$s_k, a_k$ for~$1\leq k\leq r-1$ by induction as follows:
  \begin{itemize}
  \item $z_0 = w$ and $d_0 = 0$;
  \item For each~$1\leq k\leq r$, the integer~$0\leq s_k< \beta_k$ is
    defined by the relation
    \begin{displaymath}
      z_{k-1} = s_k w_k \quad(\text{mod } \Z w_{k+1} +\cdots + \Z w_r);
    \end{displaymath}
  \item $a_k =\displaystyle \floor{\frac{z_{k-1}}{\beta_k w_k}}$ for
    each $1\leq k\leq r-1$;
  \item $z_k = z_{k-1} - s_k w_k + a_k \wt(\xi_k)$ for each
    $1\leq k\leq r-1$; $\phantom{\displaystyle\frac12}$
  \item $d_k = d_{k-1} + a_k \max\{\deg(P_k),\deg(Q_k)\}$ for each $1\leq k\leq r-1$.
  \end{itemize}

  In order to rewrite~$P/Q$ in terms of invariants, we use the
  following algorithm. For~$k=1$ up to~$k=r-1$, do:
  \begin{enumerate}
  \item \label{step:case2-D} Divide~$P$ and~$Q$ by~$f_k^{s_k}$;
  \item \label{step:case2-R} Replace each occurence of~$f_k^{\beta_k}$
    by $\dfrac{\lambda_k P_k}{\xi_k Q_k}$ in~$P$ and~$Q$;
  \item \label{step:case2-M} Multiply~$P$ and~$Q$ by
    $\xi_k^{a_k} Q_k^{a_k}$.
  \end{enumerate}
  Finally, simplify the remaining occurences of~$f_r$. We prove the
  following statement~$(H_k)$ by induction for every
  $1\leq k\leq r$:

  \claimemph{$(H_k)$ At the beginning of the~$k$-th loop,~$P$ and~$Q$
    are elements of $L[J_1,\ldots,J_{n+1}][f_k,\ldots,f_r]$ of
    weight~$z_{k-1}$, with total degree at most~$d_{k-1}$ in
    $J_1,\ldots,J_{n+1}$, such that
    \begin{displaymath}
      \dfrac{f}{g} = \dfrac{P(j_1,\ldots,j_{n+1})}{Q(j_1,\ldots,j_{n+1})}.
    \end{displaymath}}

  The statement~$(H_1)$ is true by definition of~$z_0$ and~$d_0$;
  assume that~$(H_k)$ is true for some~$k\geq 1$. Then we see,
  in order, that during the $k$-th loop:
  \begin{itemize}
  \item $z_{k-1} \in \sum_{i=k}^r \Z w_i$, so~$s_k$ is well defined.
  \item In each monomial of~$P$ and~$Q$, the exponent of~$f_k$ is of
    the form $a\beta_k + s_k$ for some integer $a\leq a_k$. Therefore
    step~\ref{step:case2-D} is an exact division, and after
    step~\ref{step:case2-R} there are no more occurences of~$f_k$
    in~$P$ or~$Q$.
  \item After step~\ref{step:case2-M},~$P$ and~$Q$ are elements of
    $L[J_1,\ldots,J_{n+1}][f_{k+1},\ldots,f_r]$ of weight
    \begin{displaymath}
      z_{k-1} - s_k w_k + a_k \wt(\xi_k) = z_k.
    \end{displaymath}
  \end{itemize}

  It remains to show that the degree of~$P,\,Q$ in
  $J_1,\ldots,J_{n+1}$ is bounded by~$d_k$ after
  step~\ref{step:case2-M}. This comes from the following observation:
  during the $k$-th loop, we only multiply the polynomials in
  $J_1,\ldots, J_{n+1}$ already present by $P_k^b Q_k^{a_k - b}$ for
  some $0\leq b\leq a_k$. This proves our claim~$(H_k)$ for all
  $1\leq k\leq r$.

  At the end of the algorithm, all the occurences of~$f_r$ cancel
  out. Therefore we obtain polynomials~$P$ and~$Q$ of total degree at
  most~$d_{r-1}$ such that
  \begin{displaymath}
    \dfrac{f}{g} = \dfrac{P(j_1,\ldots, j_{n+1})}{Q(j_1,\ldots, j_{n+1})}.
  \end{displaymath}
  By induction, we obtain
  \begin{displaymath}
    z_k \leq w\prod_{l=1}^k \left(1 + \dfrac{\wt(\xi_l)}{\beta_l w_l} \right)
  \end{displaymath}
  and
  \begin{displaymath}
    d_{r-1} \leq \sum_{k= 1}^{r-1} \left( \dfrac{w}{\beta_k w_k}
      \max\{\deg(P_k),\deg(Q_k)\} \prod_{l=1}^{k-1}\left(1 +
        \dfrac{\wt(\xi_l)}{\beta_l w_l}\right) \right) = C w.
  \end{displaymath}

  In both cases~\ref{case:gc-simple} and~\ref{case:gc-gen}, the
  algorithm runs independently on the numerator and denominator,
  hence~$Q$ can be chosen independently of~$f$.
\end{proof}

\subsection{Degree bounds in canonical form}
\label{sub:canonical}

Recall that the modular function~$j_{n+1}$ satisfies
eq.~\eqref{eq:jn+1}: we have~$E(j_1,\ldots,j_{n+1})=0$ where
\begin{displaymath}
  E = \sum_{k = 0}^{e}
  E_k(J_1,\ldots,J_n)\,J_{n+1}^{\,k} \in L[J_1,\ldots,J_n,J_{n+1}]
\end{displaymath}
has degree~$e$ in~$J_{n+1}$ and is irreducible. Let~$d_E$ denote the
total degree of~$E$ in the variables~$J_1,\ldots,J_n$.  In this
section, we work in the ring~$L(J_1,\ldots,J_n)[J_{n+1}]$
modulo~$E$. We say that a fraction $R\in L(J_1,\ldots,J_{n+1})$ is in
\emph{canonical form} if~$R$ is a polynomial in~$J_{n+1}$ of degree at
most~$e-1$.


\begin{prop}
  \label{prop:canonical}
  Let~$d\geq 0$, let~$P,\,Q\in L[J_1,\ldots, J_{n+1}]$ be polynomials
  of total degree at most~$d$, and assume that $Q(j_1,\ldots,j_{n+1})$
  is not identically zero. Let $R\in L(J_1,\ldots,J_n)[J_{n+1}]$ be the
  fraction in canonical form such that~$P/Q = R\mod E$.  Then the
  total degree of~$R$ in~$J_1,\ldots,J_n$ is bounded above by
  $(e+2d_E)d$.
\end{prop}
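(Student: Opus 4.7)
The plan is to invert $Q$ modulo $E$ via a Bezout identity in $L[J_1,\ldots,J_n][J_{n+1}]$, put the result in canonical form by pseudo-division by $E$, and then count degrees. Throughout I write $q=\deg_{J_{n+1}}Q\leq d$.

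Since $E$ is irreducible and $Q(j_1,\ldots,j_{n+1})\neq 0$, the polynomials $Q$ and $E$ are coprime in $L(J_1,\ldots,J_n)[J_{n+1}]$. Cramer's rule applied to the Sylvester matrix of $Q$ and $E$ then yields $\tilde U,\tilde V\in L[J_1,\ldots,J_n][J_{n+1}]$ with $\deg_{J_{n+1}}\tilde U<e$ and $\deg_{J_{n+1}}\tilde V<q$, together with a nonzero $\Delta=\Res_{J_{n+1}}(Q,E)\in L[J_1,\ldots,J_n]$, such that $\tilde UQ+\tilde VE=\Delta$. Multiplying by $P$, the polynomial $A:=P\tilde U$ satisfies $AQ\equiv\Delta P\pmod{E}$. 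Since $\deg_{J_{n+1}}A\leq d+e-1$, at most $d$ pseudo-division steps produce $A^\ast,\pi\in L[J_1,\ldots,J_{n+1}]$ with $E_e^d A=\pi E+A^\ast$ and $\deg_{J_{n+1}}A^\ast<e$, so that $R=A^\ast/(E_e^d\Delta)$ is the canonical form of $P/Q$ modulo $E$.

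It remains to count degrees. The Sylvester matrix has $e$ rows of $Q$-coefficients of total degree $\leq d$ in $L[J_1,\ldots,J_n]$ and $q$ rows of $E$-coefficients of total degree $\leq d_E$. Hence $\deg\Delta\leq ed+qd_E\leq d(e+d_E)$, and the coefficients of $\tilde U$ — which are, up to sign, $(q+e-1)$-minors obtained by removing one $Q$-row — have total degree at most $(e-1)d+qd_E\leq d(e-1+d_E)$. This gives $\deg A\leq d(e+d_E)+(e-1)$. Each of the (at most) $d$ pseudo-division steps multiplies by $E_e$ (of total degree $\leq d_E-e$), so $\deg A^\ast\leq d(d_E-e)+d(e+d_E)+(e-1)=2dd_E+(e-1)$, while $\deg(E_e^d\Delta)\leq d(d_E-e)+d(e+d_E)=2dd_E$. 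Therefore each coefficient of $R$ in $L(J_1,\ldots,J_n)$ has degree at most $2dd_E+(e-1)\leq(e+2d_E)d$ when $d\geq 1$; the case $d=0$ is immediate.

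The main obstacle I expect is the bookkeeping in the pseudo-division step: one must carefully verify that $\deg_{J_{n+1}}A\leq d+e-1$, so that only $d$ reduction steps are needed and only the power $E_e^d$ is introduced. This is precisely what keeps the final estimate proportional to $2d_E$ rather than to a larger multiple of $d_E$.
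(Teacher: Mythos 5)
Your proof follows the paper's route exactly: invert $Q$ modulo $E$ via the resultant and a B\'ezout identity, then reduce the $J_{n+1}$-degree by repeated multiplications by the leading coefficient $E_e$, counting degrees along the way; your Sylvester-minor bounds for $\tilde U$ and $\Delta$ are the same estimates ($\leq d(e+d_E)$) the paper uses for $UP$ and $Z$. The one slip is the assertion that $E_e$ has total degree at most $d_E-e$: in the paper $d_E$ is defined as the total degree of $E$ in the variables $J_1,\ldots,J_n$ \emph{only}, so one can only assert $\deg E_e\leq d_E$, and with that correction your bookkeeping (which tracks total degree in all $n+1$ variables) gives $\deg A^\ast\leq d(e+2d_E)+(e-1)$, slightly exceeding the claimed $(e+2d_E)d$. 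The repair is immediate and is precisely what the paper does: track only the degree in $J_1,\ldots,J_n$, so that $A=P\tilde U$ has degree at most $d(e+d_E)$ (the stray $e-1$, which comes from powers of $J_{n+1}$, disappears) and each of the at most $d$ reduction steps adds at most $d_E$, yielding the stated bound $(e+2d_E)d$ for both $A^\ast$ and the denominator $E_e^d\Delta$.
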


\begin{proof}
  In this proof, degrees and coefficients are taken with respect to
  the variable~$J_{n+1}$ unless otherwise specified.  First, we invert
  the denominator~$Q$. Consider the resultant
  \begin{displaymath}
    Z = \Res_{J_{n+1}}(Q, E) \in L[J_1,\ldots,J_n],
  \end{displaymath}
  which is nonzero by hypothesis. Let
  $U, V\in L[j_1,\ldots, j_{n+1}]$ be the associated Bézout
  coefficients, so that
  \begin{displaymath}
    Z = UQ + VE.
  \end{displaymath}
  The inverse of~$Q$ modulo~$E$ is~$U/Z$, so we have
  $P/Q = UP/Z \mod E$.

  It is well-known that~$Z$ (resp.~$Q$) has a polynomial expression of
  degree~$e$ (resp.~$e-1$) in the coefficients of~$Q$, and
  degree~$\deg(Q)$ in the coefficients of~$E$. Since the total degree
  of~$Q$ is at most~$d$, the total degrees of~$Z$ and~$UP$ in
  $J_1,\ldots,J_{n}$ are bounded above by~$d(e+d_E)$. The degree
  of~$UP$ in~$J_{n+1}$ is at most $d+e-1$.
  
  Now, we reduce~$UP/Z$ modulo~$E$ to obtain a numerator of degree at
  most~$e-1$ in~$J_{n+1}$.  We can decrease this degree by 1 by
  multiplying above and below by~$E_e(J_1,\ldots,J_n)$ and using
  the relation
  \begin{displaymath}
    E_e J_{n+1}^{e} = - \sum_{k=0}^{e-1} E_k J_{n+1}^{k}\qquad \mod E.
  \end{displaymath}
  When doing so, the total degree in~$J_1,\ldots,J_n$ increases by at
  most~$d_E$. This operation is done at most~$d$ times; therefore the
  result has total degree at most~$(e+2d_E)d$ in~$J_1,\ldots,J_n$
  and degree at most~$e-1$ in~$J_{n+1}$.
\end{proof}

\begin{defn}
  We define the \emph{geometric complexity} of the invariants~$j_1,\ldots,j_{n+1}$ to be
  \begin{displaymath}
    \GC(j_1,\ldots,j_{n+1}) = (e+2d_E) \SGC(j_1,\ldots,j_{n+1}) +e-1.
  \end{displaymath}
  This quantity depends on the choice of~$j_{n+1}$ as a distinguished
  invariant.
\end{defn}

\begin{prop}
  \label{prop:gc}
  Let~$w\geq 0$, let $f, g$ be symmetric modular forms on~$\Scal$ of
  weight~$w$, and assume that~$g$ is nonzero.
  Let~$R\in L(J_1,\ldots J_n)[J_{n+1}]$ be the rational fraction in
  canonical form such that
  \begin{displaymath}
    \frac{f}{g} = R(j_1,\ldots,j_{n+1}).
  \end{displaymath}
  Then the total degree of~$R$ in~$J_1,\ldots,J_{n+1}$ is bounded above by
  $\GC(j_1,\ldots,j_{n+1}) w$.
\end{prop}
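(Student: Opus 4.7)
The plan is to combine the two main technical tools developed in this subsection, namely Proposition~\ref{prop:rewrite} and Proposition~\ref{prop:canonical}, in the obvious way.

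First, I would apply Proposition~\ref{prop:rewrite} to write
\begin{displaymath}
  \dfrac{f}{g} = \dfrac{P(j_1,\ldots,j_{n+1})}{Q(j_1,\ldots,j_{n+1})}
\end{displaymath}
where $P,Q\in L[J_1,\ldots,J_{n+1}]$ have total degree at most $d = \SGC(j_1,\ldots,j_{n+1})\, w$. In particular the hypothesis that $Q(j_1,\ldots,j_{n+1})$ is not identically zero is satisfied, since $g\neq 0$ on $\Scal$.

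Next, I would feed $P$ and $Q$ into Proposition~\ref{prop:canonical} to obtain the canonical representative $R\in L(J_1,\ldots,J_n)[J_{n+1}]$ with $P/Q \equiv R \pmod{E}$. Since both sides evaluate to $f/g$ at $(j_1,\ldots,j_{n+1})$ and $R$ is in canonical form, this $R$ coincides with the one in the statement. Proposition~\ref{prop:canonical} then guarantees that the total degree of $R$ in $J_1,\ldots,J_n$ is at most $(e+2d_E)d = (e+2d_E)\,\SGC(j_1,\ldots,j_{n+1})\,w$, while by definition of the canonical form the degree of $R$ in $J_{n+1}$ is at most $e-1$.

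Putting the two bounds together, the total degree of $R$ in all variables $J_1,\ldots,J_{n+1}$ is bounded above by
\begin{displaymath}
  (e+2d_E)\,\SGC(j_1,\ldots,j_{n+1})\,w + (e-1).
\end{displaymath}
For $w\geq 1$, this is at most $\bigl((e+2d_E)\SGC(j_1,\ldots,j_{n+1}) + e-1\bigr)w = \GC(j_1,\ldots,j_{n+1})\,w$, as required. The case $w=0$ is trivial: $f/g\in L$ is constant, so $R$ has total degree $0$. There is no substantial obstacle here; the proof is essentially a bookkeeping concatenation of the two preceding propositions, with the only mild subtlety being the separate treatment of the degree contribution in $J_{n+1}$, absorbed into the additive $e-1$ term of $\GC$.
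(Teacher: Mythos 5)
Your proof is correct and is exactly the paper's argument: the paper's proof is literally ``Combine Propositions~\ref{prop:rewrite} and~\ref{prop:canonical}'', and the definition of $\GC(j_1,\ldots,j_{n+1}) = (e+2d_E)\SGC(j_1,\ldots,j_{n+1}) + e - 1$ is designed precisely to absorb the additive $e-1$ contribution from the degree in $J_{n+1}$, as you note. Your separate handling of $w=0$ and $w\geq 1$ is a harmless extra bit of care.
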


\begin{proof}
  Combine Propositions~\ref{prop:rewrite} and~\ref{prop:canonical}.
\end{proof}

We are ready to prove the first part of Theorem~\ref{thm:main} on
degree bounds for modular equations, with an explicit expression for
the constant~$C_1$.

\begin{thm}
  \label{thm:degree-bound-proved}
  Let~$H_\delta$ be an absolutely irreducible Hecke correspondence
  on~$\Scal\times \Tcal$ defined by an adelic element~$\delta$ of~$G$,
  and let~$\Heckedeg(\delta)$ be the degree of~$H_\delta$. For
  each~$1\leq i\leq n+1$, let~$\chi_i$ be a denominator of~$j_i$ as
  in~§\ref{sub:denom}. Let~$1\leq m\leq n+1$. Finally, let
  \begin{displaymath}
    C_1 = GC(j_1,\ldots,j_{n+1})\,(\#\Sigma)  \sum_{i=1}^m \wt(\chi_i).
  \end{displaymath}
  Then there exists a polynomial~$D_m\in L[J_1,\ldots,J_n]$ of total
  degree at most~$C_1\Heckedeg(\delta)$ such that~$D_m\Psi_{\delta,m}$
  is a polynomial in~$J_1,\ldots,J_{n+1},Y_1,\ldots,Y_m$ whose total
  degree in~$J_1,\ldots,J_{n+1}$ is also bounded above
  by~$C_1 \Heckedeg(\delta)$. In particular,
  if~$\Frac\in L(J_1,\ldots,J_n)[J_{n+1}]$ is a coefficient
  of~$\Psi_{\delta,m}$, then the total degree of~$\Frac$ is bounded
  above by~$C_1 \Heckedeg(\delta)$.
\end{thm}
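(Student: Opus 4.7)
The plan is to combine the three preparatory results of this section---the denominator construction of \S\ref{sub:denom}, the rewriting procedure of \S\ref{sub:rewrite}, and the canonical form reduction of \S\ref{sub:canonical}---in a straightforward way, while being careful to extract a \emph{single} denominator $D_m$ that works uniformly for all coefficients of $\Psi_{\delta,m}$.

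First I would set $g = g_{\delta,m}$ as defined in~\S\ref{sub:denom}. By Lemma~\ref{lem:denom-weight}, $g$ is a nonzero symmetric modular form on~$\Scal$ of weight exactly
\[
  w = (\#\Sigma)\,\Heckedeg(\delta) \sum_{i=1}^m \wt(\chi_i),
\]
and by Proposition~\ref{prop:denom-valid}, multiplying~$\Psi_{\delta,m}$ by~$g$ clears the denominators of all its coefficients. Concretely, if we expand $g\,\Psi_{\delta,m}$ as a polynomial in $Y_1,\ldots,Y_m$, each coefficient~$b$ is a symmetric modular form of weight~$w$ on~$\Scal$, so each coefficient~$a$ of~$\Psi_{\delta,m}$ may be written as the quotient $a = b/g$ of two symmetric modular forms of weight~$w$.

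Next I would apply Proposition~\ref{prop:gc} to each such quotient~$b/g$. This gives, for every coefficient~$a$, a representation $a = R_a(j_1,\ldots,j_{n+1})$ where $R_a \in L(J_1,\ldots,J_n)[J_{n+1}]$ is in canonical form (degree~$\leq e-1$ in~$J_{n+1}$) of total degree at most $\GC(j_1,\ldots,j_{n+1})\,w = C_1 \Heckedeg(\delta)$ in $J_1,\ldots,J_{n+1}$. The key point is that the denominator of~$R_a$ can be chosen independently of~$a$: indeed, tracing through the proof of Proposition~\ref{prop:rewrite}, the polynomial~$Q$ there depends only on the fixed modular form in the denominator~$g$ and not on the numerator, so the same $Q$ serves all coefficients; then, in the proof of Proposition~\ref{prop:canonical}, the final denominator used to produce canonical form is $Z \cdot E_e^{k}$ where $Z = \Res_{J_{n+1}}(Q,E)$ depends only on~$Q$, and~$k$ counts the number of reductions modulo~$E$ needed (at most $d = \deg Q \leq \SGC \cdot w$). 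Taking $k$ equal to the maximum needed over all coefficients therefore yields a single polynomial $D_m = Z \cdot E_e^{k_{\max}} \in L[J_1,\ldots,J_n]$ which clears all denominators simultaneously.

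It then remains to verify that both~$D_m$ and the resulting $D_m \Psi_{\delta,m}$ have the claimed degree bound. For~$D_m$, the estimate from the proof of Proposition~\ref{prop:canonical} gives $\deg D_m \leq (e + 2d_E)(\SGC \cdot w) \leq \GC \cdot w = C_1 \Heckedeg(\delta)$. For each coefficient of $D_m \Psi_{\delta,m}$ (viewed as a polynomial in $J_1,\ldots,J_{n+1}$, after reducing modulo~$E$), the total degree is bounded by the degree of~$R_a$ in $J_1,\ldots,J_{n+1}$ after clearing the common denominator, which is again at most $\GC \cdot w = C_1 \Heckedeg(\delta)$ by Proposition~\ref{prop:gc}. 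The final claim about individual coefficients $\Frac$ of~$\Psi_{\delta,m}$ is immediate from the bounds on $D_m$ and $D_m \Psi_{\delta,m}$.

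The only mildly delicate step is the uniformity of the denominator, which is not stated outright in Propositions~\ref{prop:rewrite} and~\ref{prop:canonical} but is built into their proofs; the rest is routine bookkeeping of the constants introduced in Definitions of $\SGC$ and $\GC$.
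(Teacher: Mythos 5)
Your proposal is correct and follows essentially the same route as the paper: take $g_{\delta,m}$ as the common denominator, use Lemma~\ref{lem:denom-weight} and Proposition~\ref{prop:denom-valid} to write each coefficient as a quotient of symmetric modular forms of weight $(\#\Sigma)\Heckedeg(\delta)\sum_{i\le m}\wt(\chi_i)$, and conclude via Proposition~\ref{prop:gc}, with the uniform denominator coming from the independence statement in Proposition~\ref{prop:rewrite}. Your explicit tracing of the uniformity through the canonical-form step of Proposition~\ref{prop:canonical} (the denominator $Z\,E_e^{k}$ depending only on $Q$ and $E$) is a slightly more careful spelling-out of a point the paper's proof leaves implicit, but it is the same argument.
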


\begin{proof}
  Let~$g_{\delta,m}$ be the modular form on~$\Scal$ defined
  in~§\ref{sub:denom}, and let~$\Frac$ be a coefficient
  of~$\Psi_{\delta,m}$. By Proposition~\ref{prop:denom-valid}, the
  modular function~$\Frac(j_1,\ldots,j_{n+1})$ is of the
  form~$f/g_{\delta,m}$, where~$f$ is a modular form on~$\Scal$ of
  weight~$\wt(g_{\delta,m})$. By Lemma~\ref{lem:denom-weight}, we have
  \begin{displaymath}
    \wt(g_{\delta,m}) =  (\#\Sigma) \Heckedeg(\delta) \sum_{i=1}^{m} \wt(\chi_i),
  \end{displaymath}
  so the degree bound on~$\Frac$ follows from
  Proposition~\ref{prop:gc}. By Proposition~\ref{prop:rewrite}, the
  denominator can be chosen independently of the coefficient
  of~$\Psi_{\delta,m}$ we consider, hence the existence of a common
  denominator~$D_m$ of the correct total degree.
\end{proof}

\subsection{Explicit degree bounds in dimension 2}
\label{sub:degree-genus-2}


Our methods provide new results about the degrees of the coefficients
of modular equations of Siegel and Hilbert type for abelian surfaces,
introduced in~§\ref{sub:siegel} and~§\ref{sub:hilbert}
respectively. In the Hilbert case, we restrict to the quadratic
field~$F = \Q(\sqrt{5})$, and consider modular equations in terms of
Gundlach invariants.

In both cases, we can take~$j_{n+1}=1$ and~$E = J_{n+1}-1$ in the
notation of~§\ref{sub:modeq}. Then the notions of geometric complexity
and symmetric geometric complexity coincide.

\begin{lem}
  \label{lem:rewrite-siegel}
  Let~$j_1,j_2$, and~$j_3$ denote the Igusa invariants on the Siegel
  threefold $\Sp_4(\Z)\backslash\Half_2$, as defined
  in~§\ref{sub:siegel}. Then we have
  \begin{displaymath}
    \GC(j_1,j_2,j_3,1)\leq \frac16.
  \end{displaymath}
\end{lem}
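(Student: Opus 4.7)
The plan is to apply case~\ref{case:gc-simple} of Definition~\ref{def:gc} to a carefully ordered list of generators of the graded ring of Siegel modular forms. Since we take $j_{n+1}=1$ and $E=J_{n+1}-1$, we have $e=1$ and $d_E=0$, so $\GC(j_1,j_2,j_3,1)=\SGC(j_1,j_2,j_3,1)$, and it suffices to bound the symmetric geometric complexity by $1/6$.

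Igusa's theorem gives us the four generators $I_4, I_6', I_{10}, I_{12}$ of weights $4, 6, 10, 12$. For case~\ref{case:gc-simple} to apply, the last generator $f_r$ must be the one appearing in all denominators, and $f_{r-1}$ must be flexible enough to absorb the weight discrepancies; accordingly I would order
\begin{displaymath}
  f_1 = I_6',\quad f_2 = I_{12},\quad f_3 = I_4,\quad f_4 = I_{10},
\end{displaymath}
with corresponding weights $w_1=6$, $w_2=12$, $w_3=4$, $w_4=10$.

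Next, for each $k=1,2,3$, I would compute $\beta_k$ and exhibit $\xi_k$ as a power of $f_3=I_4$ and $\lambda_k$ as a power of $f_4=I_{10}$. Checking the condition $\beta_k w_k \in \Z w_{k+1}+\cdots+\Z w_r$ gives $\beta_1=\beta_2=1$ (since $6,12\in 2\Z=\Z\gcd(4,10)$) and $\beta_3=5$ (since the minimal integer with $4\beta_3\in 10\Z$ is $5$). Solving $\wt(\lambda_k)-\wt(\xi_k)=\beta_k w_k$ inside $\{I_4^b,I_{10}^a\}$ yields $(\xi_1,\lambda_1)=(I_4,I_{10})$, $(\xi_2,\lambda_2)=(I_4^2,I_{10}^2)$, and $(\xi_3,\lambda_3)=(1,I_{10}^2)$. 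By the very definition of the Igusa invariants,
\begin{displaymath}
  \frac{\xi_1 f_1^{\beta_1}}{\lambda_1}=j_1,\qquad
  \frac{\xi_2 f_2^{\beta_2}}{\lambda_2}=j_2,\qquad
  \frac{\xi_3 f_3^{\beta_3}}{\lambda_3}=j_3,
\end{displaymath}
so I can take $P_k=J_k$ and $Q_k=1$ for $k=1,2,3$, each of total degree $1$.

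Finally, I plug into case~\ref{case:gc-simple}:
\begin{displaymath}
  \max_k \frac{\wt(\xi_k)}{\beta_k w_k} = \max\Bigl(\tfrac{4}{6},\tfrac{8}{12},0\Bigr) = \tfrac{2}{3},
  \qquad
  \max_k \frac{\deg(P_k)}{\beta_k w_k+\wt(\xi_k)} = \max\Bigl(\tfrac{1}{10},\tfrac{1}{20},\tfrac{1}{20}\Bigr) = \tfrac{1}{10},
\end{displaymath}
so the symmetric geometric complexity relative to this choice is at most $\bigl(1+\tfrac{2}{3}\bigr)\cdot\tfrac{1}{10}=\tfrac{1}{6}$, giving the claimed bound. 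The only non-routine step is choosing the ordering of generators so that the hypotheses of case~\ref{case:gc-simple} are satisfied; once the ordering is fixed, everything else is explicit bookkeeping.
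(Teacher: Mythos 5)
Your proposal is correct and follows essentially the same route as the paper: the same ordering $f_1=I_6'$, $f_2=I_{12}$, $f_3=I_4$, $f_4=I_{10}$, the same identification of the three quotients with $j_1,j_2,j_3$ (so $P_k=J_k$, $Q_k=1$), and the same evaluation of case~\ref{case:gc-simple} giving $\bigl(1+\tfrac{2}{3}\bigr)\cdot\tfrac{1}{10}=\tfrac16$. You simply spell out the $\beta_k$, $\xi_k$, $\lambda_k$ bookkeeping that the paper leaves implicit.
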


\begin{proof}
  Recall that the graded $\Q$-algebra of Siegel modular forms of
  level~$\Sp_4(\Z)$ is generated by
  \begin{displaymath}
    f_1 = I_6',\quad f_2 = I_{12}, \quad f_3 = I_4, \quad\text{and}\quad f_4 = I_{10}.
  \end{displaymath}
  We are in case~\ref{case:gc-simple} of Definition~\ref{def:gc}, since
  \begin{displaymath}
    \frac{I_6'I_4}{I_{10}} = j_1, \quad \frac{I_{12}I_4^2}{I_{10}^2} = j_2,
    \quad\text{and}\quad  \frac{I_4^5}{I_{10}^2} = j_3.
  \end{displaymath}
  The definition gives
  \begin{displaymath}
    \SGC(j_1,j_2,j_3,1) \leq \left(1 + \frac23\right) \cdot \frac{1}{10} = \frac16.
  \end{displaymath}
\end{proof}

\begin{prop}
  \label{prop:degree-siegel}
  Let~$\ell$ be a prime number, and let~$\Psi_{\ell,m}$
  for~$1\leq m\leq 3$ denote the Siegel modular equations of
  level~$\ell$ in Igusa invariants. Let~$\Frac\in \Q(J_1,J_2,J_3)$ be
  a coefficient of~$\Psi_{\ell,1}$ (resp.~$\Psi_{\ell,2}$
  or~$\Psi_{\ell,3}$). Then the total degree of~$\Frac$ is bounded above
  by $5 \Heckedeg(\ell)/3$ (resp.~$10 \Heckedeg(\ell)/3$), where
  $\Heckedeg(\ell) = \ell^3 + \ell^2 + \ell + 1$.
\end{prop}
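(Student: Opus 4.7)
The proof is a direct application of Theorem~\ref{thm:degree-bound-proved}, refined by Proposition~\ref{prop:better-g}, combined with the bound on $\GC(j_1,j_2,j_3,1)$ from Lemma~\ref{lem:rewrite-siegel}. In the Siegel setting of §\ref{sub:siegel} we have $\Sigma = \{1\}$, and since we may take $j_4 = 1$ with $E = J_4 - 1$ (so $e = 1$ and $d_E = 0$), the geometric and symmetric geometric complexities coincide, giving $\GC(j_1,j_2,j_3,1) \leq 1/6$.

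The key observation is that all three Igusa invariants admit denominators that are powers of a single modular form $\chi = I_{10}$ of weight $10$. Indeed, from
\[
  j_1 = \frac{I_4 I_6'}{I_{10}}, \quad j_2 = \frac{I_4^2 I_{12}}{I_{10}^2}, \quad j_3 = \frac{I_4^5}{I_{10}^2},
\]
we may take $\chi_i = \chi^{\alpha_i}$ with $\alpha_1 = 1$, $\alpha_2 = \alpha_3 = 2$. This places us in the situation of Proposition~\ref{prop:better-g}, which for each $1 \leq m \leq 3$ produces a symmetric modular form $g_{\delta,m}$ of weight
\[
  \wt(g_{\delta,m}) = \Heckedeg(\ell)\cdot \alpha\cdot 10, \qquad \alpha := \max_{1 \leq i \leq m}\alpha_i,
\]
whose product with $\Psi_{\ell,m}$ has coefficients that are modular forms on $\Scal$. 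Thus $\alpha = 1$ for $m=1$ and $\alpha = 2$ for $m=2,3$, so $\wt(g_{\delta,1}) = 10\, \Heckedeg(\ell)$ while $\wt(g_{\delta,2}) = \wt(g_{\delta,3}) = 20\, \Heckedeg(\ell)$.

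Each coefficient $\Frac$ of $\Psi_{\ell,m}$ is therefore a quotient $f/g_{\delta,m}$ of two symmetric modular forms of the same weight. Proposition~\ref{prop:gc} converts this into the total degree bound
\[
  \deg \Frac \leq \GC(j_1,j_2,j_3,1) \cdot \wt(g_{\delta,m}) \leq \tfrac{10\alpha}{6}\,\Heckedeg(\ell),
\]
which specializes to $(5/3)\,\Heckedeg(\ell)$ for $m=1$ and $(10/3)\,\Heckedeg(\ell)$ for $m=2,3$. The only delicate point — and not really an obstacle — is verifying that Proposition~\ref{prop:better-g} applies in the simpler form of the modular equations described in Remark~\ref{rem:simpler-modeqs}, which is the relevant setup here since $j_{1,\delta}$ generates the whole function-field extension on $H_\delta$ (giving $d_1 = \Heckedeg(\ell)$, $d_2 = d_3 = 1$). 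By inspecting equation~\eqref{eq:simpler-Psidelta}, only conjugates of $j_{1,\delta}$ and $j_{m,\delta}$ appear in $\Psi_{\ell,m}$; since $\alpha \geq \alpha_1$ and $\alpha \geq \alpha_m$, the factor $\chi^{\alpha}_\delta$ dominates both $\chi_{1,\delta}$ and $\chi_{m,\delta}$, so the product over $K_0$-cosets indeed clears every pole.
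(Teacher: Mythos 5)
Your proof is correct and follows essentially the same route as the paper: take $I_{10}$-power denominators for the Igusa invariants, apply Proposition~\ref{prop:better-g} to get common denominators $g_{\ell,m}$ of weight $10\,\Heckedeg(\ell)$ (for $m=1$) and $20\,\Heckedeg(\ell)$ (for $m=2,3$), and conclude with Lemma~\ref{lem:rewrite-siegel} and Proposition~\ref{prop:gc}. The closing verification via Remark~\ref{rem:simpler-modeqs} is a harmless extra detail not needed in the paper's argument.
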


\begin{proof}
  The integer~$\Heckedeg(\ell)$ is the degree of the Hecke
  correspondence. The denominators of~$j_1,j_2,$ and~$j_3$ can be
  taken to be the modular forms~$I_{10},I_{10}^2$, and
  $I_{10}^2$. Let~$g_{\ell,m}$ for~$1\leq m\leq 3$ be the common
  denominators of the modular equations~$\Psi_{\ell,m}$ defined in
  Proposition~\ref{prop:better-g}, so that
  $g_{\ell,2} = g_{\ell,3} = g_{\ell,1}^2$ and
  $\wt(g_{\ell,1}) = 10 \Heckedeg(\ell)$.

  Then~$\Frac(j_1,j_2,j_3)$ is the quotient of two modular forms of
  degree~$10 \Heckedeg(\ell)$ (resp.~$20 \Heckedeg(\ell)$) on~$\Scal$,
  by Proposition~\ref{prop:better-g}. Therefore the result follows from
  Lemma~\ref{lem:rewrite-siegel} and Proposition~\ref{prop:gc}.
\end{proof}

\begin{lem}
  \label{lem:rewrite-hilbert}
  Let~$F = \Q(\sqrt{5})$, and let~$g_1,g_2$ denote the Gundlach
  invariants on the Hilbert
  surface~$\SL(\Z_F\oplus\Z_F^\dual) \backslash\Half_1^2$, as defined
  in~§\ref{sub:hilbert}. Then we have
  \begin{displaymath}
    \GC(g_1,g_2,1) \leq \frac16.
  \end{displaymath}
\end{lem}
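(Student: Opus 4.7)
The plan is to apply Definition~\ref{def:gc} directly, in close analogy with the Siegel case of Lemma~\ref{lem:rewrite-siegel}. Since $g_1$ and $g_2$ are algebraically independent and generate $L(\Scal)^\Sigma$, we may take the distinguished invariant to be $j_3 = 1$, so that $E = J_3 - 1$, $e = 1$, and $d_E = 0$. By the definition of $\GC$, it then suffices to bound $\SGC(g_1, g_2, 1)$.

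For the generators of the graded $\Q$-algebra of symmetric Hilbert modular forms recalled in §\ref{sub:hilbert}, I would choose the ordering $f_1 = F_6$, $f_2 = F_2$, $f_3 = F_{10}$, of respective weights $w_1 = 6$, $w_2 = 2$, $w_3 = 10$. The minimal integers prescribed by Definition~\ref{def:gc} are then $\beta_1 = 1$ (since $6 = 3 w_2 \in \Z w_2 + \Z w_3$) and $\beta_2 = 5$ (since $5 w_2 = w_3$).

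Next, I take $\lambda_1 = F_{10}$ and $\xi_1 = F_2^2$, which satisfy $\wt(\lambda_1) - \wt(\xi_1) = 6 = \beta_1 w_1$ and yield $\xi_1 f_1/\lambda_1 = F_2^2 F_6/F_{10} = g_2$; and $\lambda_2 = F_{10}$, $\xi_2 = 1$, which satisfy $\wt(\lambda_2) - \wt(\xi_2) = 10 = \beta_2 w_2$ and yield $\xi_2 f_2^5/\lambda_2 = F_2^5/F_{10} = g_1$. Accordingly $P_1 = J_2$, $P_2 = J_1$, and $Q_1 = Q_2 = 1$, each of total degree at most~$1$. These choices satisfy the hypotheses of case~\ref{case:gc-simple} of Definition~\ref{def:gc}: every $\lambda_k$ is a power of $f_3 = F_{10}$, every $\xi_k$ is a power of $f_2 = F_2$ (with $\xi_2 = 1 = F_2^0$), and every $Q_k$ equals~$1$.

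Plugging into the formula for case~\ref{case:gc-simple} then yields
\[
\SGC(g_1, g_2, 1) \;\leq\; \left(1 + \max\{\tfrac{4}{6}, 0\}\right)\cdot\max\{\tfrac{1}{10}, \tfrac{1}{10}\} \;=\; \tfrac{5}{3}\cdot\tfrac{1}{10} \;=\; \tfrac{1}{6},
\]
from which the desired bound $\GC(g_1, g_2, 1) \leq 1/6$ follows. There is no real obstacle: the computation is structurally identical to the Siegel case, and the only nontrivial input beyond the definitions is the Gundlach description of the graded ring of symmetric Hilbert modular forms for $\Q(\sqrt{5})$ as a free polynomial algebra on $F_2, F_6, F_{10}$.
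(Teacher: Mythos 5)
Your proposal is correct and follows essentially the same route as the paper: order the Gundlach generators as $F_6, F_2, F_{10}$, observe that $g_2 = F_2^2F_6/F_{10}$ and $g_1 = F_2^5/F_{10}$ put you in case~\ref{case:gc-simple} of Definition~\ref{def:gc}, and compute $\bigl(1+\tfrac23\bigr)\cdot\tfrac1{10}=\tfrac16$. You merely make explicit the data $\beta_k,\lambda_k,\xi_k,P_k,Q_k$ that the paper's proof leaves implicit, which is a fine (indeed slightly more careful) presentation of the same argument.
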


\begin{proof}
  Choose~$F_6,F_2$, and~$F_{10}$ as generators of the graded
  $\Q$-algebra of Hilbert modular forms of level
  $\SL(\Z_F\oplus\Z_F^\dual)$. We have
  \begin{displaymath}
    \frac{F_6F_2^2}{F_{10}} = g_2\quad\text{and}\quad \frac{F_2^5}{F_{10}}=g_1.
  \end{displaymath}
  Therefore we are in case~\ref{case:gc-simple} of
  Definition~\ref{def:gc}, and
  \begin{displaymath}
    \GC(g_1,g_2,1)\leq \left(1+\frac23 \right)\cdot\frac{1}{10} = \frac16.
  \end{displaymath}
\end{proof}

\begin{prop}
  \label{prop:degree-hilbert}
  Let~$F = \Q(\sqrt{5})$, let $\beta\in \Z_F$ be a totally positive
  prime, and let~$\Psi_{\beta,m}$ for~$m\in\{1,2\}$ denote the Hilbert
  modular equations of level~$\beta$ in Gundlach
  invariants. Let~$\Frac\in \Q(J_1,J_2)$ be a coefficient
  of~$\Psi_{\beta,1}$ or~$\Psi_{\beta,2}$. Then the total degree
  of~$\Frac$ is bounded above by~$10 \Heckedeg(\beta)/3$, where
  $\Heckedeg(\beta) = N_{F/\Q}(\beta)+1$.
\end{prop}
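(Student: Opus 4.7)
The plan is to mirror the proof of Proposition~\ref{prop:degree-siegel}, replacing Siegel data with the Hilbert data for $F=\Q(\sqrt 5)$. The Gundlach invariants are
\begin{displaymath}
  g_1 = \frac{F_2^5}{F_{10}}, \qquad g_2 = \frac{F_2^2 F_6}{F_{10}},
\end{displaymath}
so a natural common denominator is $\chi = F_{10}$, and both $g_1$ and $g_2$ admit $\chi_i = F_{10}^{\alpha_i}$ with $\alpha_1 = \alpha_2 = 1$. Thus Proposition~\ref{prop:better-g} applies with $\alpha = \max\{\alpha_1,\alpha_2\} = 1$ for either $m \in \{1,2\}$.

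With $\Sigma = \{1,\sigma\}$ of cardinality $2$, Proposition~\ref{prop:better-g} then furnishes a nonzero symmetric modular form $g_{\beta,m}$ on $\Scal$ such that the coefficients of $g_{\beta,m} \Psi_{\beta,m}$ are symmetric modular forms on $\Scal$, and whose weight equals
\begin{displaymath}
  \wt(g_{\beta,m}) = (\#\Sigma)\,\Heckedeg(\beta)\,\alpha\,\wt(\chi) = 2 \cdot \Heckedeg(\beta) \cdot 1 \cdot 10 = 20\,\Heckedeg(\beta).
\end{displaymath}
In particular, every coefficient $\Frac(g_1, g_2)$ of $\Psi_{\beta,m}$ is a quotient of two symmetric modular forms on $\Scal$ of common weight $20\,\Heckedeg(\beta)$ defined over $\Q$.

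Since we are in the regime $j_{n+1}=1$ and $E = J_{n+1}-1$, the geometric complexity coincides with the symmetric geometric complexity, so Lemma~\ref{lem:rewrite-hilbert} gives $\GC(g_1, g_2, 1) \leq 1/6$. Applying Proposition~\ref{prop:gc} to such a quotient yields
\begin{displaymath}
  \deg(\Frac) \leq \GC(g_1,g_2,1) \cdot \wt(g_{\beta,m}) \leq \tfrac{1}{6} \cdot 20\,\Heckedeg(\beta) = \tfrac{10}{3}\,\Heckedeg(\beta),
\end{displaymath}
which is the claimed bound. The only subtlety to watch is bookkeeping the factor $\#\Sigma = 2$ coming from the symmetrization under real conjugation (absent in the Siegel case), and confirming that the same $\alpha$ serves both $m=1$ and $m=2$; this is what makes the bound uniform in $m$, unlike in Proposition~\ref{prop:degree-siegel} where $\chi_2 = \chi_3 = I_{10}^2$ forced a different constant for $m \geq 2$.
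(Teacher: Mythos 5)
Your proof is correct and follows exactly the paper's argument: take $F_{10}$ as the common denominator of both Gundlach invariants, apply Proposition~\ref{prop:better-g} (with $\alpha=1$ and $\#\Sigma=2$) to get a common denominator $g_{\beta,1}=g_{\beta,2}$ of weight $20\,\Heckedeg(\beta)$, and conclude via Lemma~\ref{lem:rewrite-hilbert} and Proposition~\ref{prop:gc}. In fact you correct a small slip in the paper, which cites the Siegel rewriting lemma where the Hilbert one (Lemma~\ref{lem:rewrite-hilbert}) is meant.
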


\begin{proof}
  The integer~$\Heckedeg(\beta)$ is the degree of the Hecke
  correspondence, and the automorphism group~$\Sigma$ used to define
  the Hilbert modular equations has order 2. We can take the
  modular~$F_{10}$ as denominator of both~$g_1$ and~$g_2$; the common
  denominators $g_{\beta,1} = g_{\beta,2}$ from
  Proposition~\ref{prop:better-g} have weight~$20 \Heckedeg(\beta)$,
  so the result follows from Lemma~\ref{lem:rewrite-siegel} and
  Proposition~\ref{prop:gc}.
\end{proof}

The degree bounds in Propositions~\ref{prop:degree-siegel}
and~\ref{prop:degree-hilbert} are both reached experimentally. In the
Siegel case with $\ell=2$, the maximum degree is~25; in the Hilbert
case with $N_{F/\Q}(\beta) = 41$, the maximum degree is~140
\cite{milio_DatabaseModularPolynomials}.

\section{Height estimates for modular equations}
\label{sec:height-bound}

Another important information when manipulating modular equations,
besides their degrees, is the size of their coefficients. More
precisely, we use the notion of \emph{heights} of elements,
polynomials and rational fractions over a number field. The goal of
this section is to prove part~\ref{item:height-bound} of
Theorem~\ref{thm:main}, giving height bounds on coefficients of
modular equations.

As mentioned in the introduction, the proof is inspired by existing
works on elliptic modular
polynomials~\cite{pazuki_ModularInvariantsIsogenies2019}. First, we
study the heights of modular equations evaluated at well-chosen
points, using the fact that the underlying Hecke correspondence
describes isogenous abelian varieties. Then we apply the main result
of~\cite{kieffer_UpperBoundsHeights2020}, which gives a tight
relation between the height of a rational fraction and the heights of
sufficiently many of its evaluations.

\subsection{Definition of heights}
\label{sub:heights}

Let us recall the well-known definitions. We use the following notation:
\begin{itemize}
\item $L$ is a number field of degree~$d_L$ over~$\Q$;
\item $\V_L^0$ (resp.~$\V_L^\infty$) is the set of all nonarchimedean
  (resp.~archimedean) places of~$L$; and
\item $\V_L = \V_L^0\sqcup \V_L^\infty$ is the set of all places of~$L$.
\end{itemize}
For each place~$v$ of~$L$,
\begin{itemize}
\item $L_v$ (resp.~$\Q_v$) denotes the completion of~$L$ (resp.~$\Q$) at~$v$,
\item $d_v = [L_v:\Q_v]$ denotes the local extension degree of~$L/\Q$
  at~$v$, and
\item $|\cdot|_v$ denotes the normalized absolute value associated
  with~$v$.
\end{itemize}
We normalize the nonarchimedean absolute values of~$L$ in the
following way: for each $v\in \V_L^0$, if~$p\in \mathcal{P}_\Q$ is the
prime below~$v$, then $|p|_v = 1/p$.

The (absolute logarithmic Weil) \emph{height} of projective
tuples, affine tuples, elements, poly\-nomials and rational
fractions over~$L$ is defined as follows.

\begin{defn}~ Let~$n\geq 1$, and let~$y_0,\ldots,y_n\in L$.
  \label{def:heights}
  \begin{enumerate}
  \item The \emph{projective height} of~$(y_0:\cdots:y_n)\in \Proj^n_L$ is
    \begin{displaymath}
      \h(y_0:\cdots:y_n) = \sum_{v\in\V_L} \frac{d_v}{d_L} \log
      \bigl(\max_{0\leq i\leq n} |y_i|_v \bigr).
    \end{displaymath}
  \item The \emph{affine height} of~$(y_1,\ldots,y_n)\in L^n$ is the
    projective height of~$(1:y_1:\cdots:y_n)$:
    \begin{displaymath}
      \h(y_1,\ldots,y_n) = \sum_{v\in\V_L} \frac{d_v}{d_L}
      \log \bigl(\max\{1, \max_{1\leq i\leq n} |y_i|_v\} \bigr).
    \end{displaymath}
    In particular, for every~$y\in L$, we have
    \begin{displaymath}
      \h(y) = \sum_{v\in\V_L}
      \dfrac{d_v}{d_L} \log \bigl( \max\{1, |y|_v\} \bigr).
    \end{displaymath}
  \item Let~$P\in L[Y_1,\ldots,Y_n]$ be a multivariate polynomial
    over~$L$, and write
    \begin{displaymath}
      P = \sum_{k = (k_1,\ldots,k_n)\in \N^n} c_k Y_1^{k_1}\cdots Y_n^{k_n}.
    \end{displaymath}
    Let~$v\in\V_L$. We write 
    \begin{displaymath}
      |P|_v =
      \max_{k\in \N^n} |c_k|_v
    \end{displaymath}
    and
    \begin{displaymath}
      \h(P) = \sum_{v\in\V_L} \dfrac{d_v}{d_L} \log
      \bigl(\max\{1, |P|_v\} \bigr).
    \end{displaymath}
    In other words,~$h(P)$ is the height of the affine tuple
    formed by all the coefficients of~$P$.
  \item Let $\Frac\in L(Y_1,\ldots,Y_n)$ be a multivariate rational
    fraction over~$L$, and choose coprime
    polynomials~$P,Q\in L[Y_1,\ldots,Y_n]$ such that $\Frac =
    P/Q$. Then we define~$\h(\Frac)$ as the height of the projective
    tuple formed by all the coefficients of~$P$ and~$Q$.
  \end{enumerate}
\end{defn}

Here are a few elementary properties of heights.
\begin{enumerate}
\item Projective heights are well defined, by the product formula
  \cite[Lem.~B.2.1(a)]{hindry_DiophantineGeometry2000}. Therefore the
  height of a fraction is also well defined.
\item Heights are independent of the ambient number field
  \cite[Lem.~B.2.1(c)]{hindry_DiophantineGeometry2000}, by another
  application of the product formula. In particular we note that
  \begin{displaymath}
    \sum_{v\in \V_L^\infty} \frac{d_v}{d_L}=1.
  \end{displaymath}
\item If~$L=\Q$, then Definition~\ref{def:heights} coincides with the
  naive one given in the introduction.
\end{enumerate}
  
Informally, the height of an element $y\in L$ measures the amount of
information needed to represent $y$.

\subsection{Heights, evaluations and roots}
\label{sub:height-frac}

In this section, we state relations between
\begin{enumerate}
\item The height of a univariate polynomial over~$L$ and the height
  of its roots;
\item The height of a multivariate polynomial or multivariate rational
  fraction over~$L$ with the heights of its values at special points.
\end{enumerate}
Several of the statements are easy consequences of the formul\ae\ from
Definition~\ref{def:heights}, while others are more intricate and are
proved by the author in a separate
paper~\cite{kieffer_UpperBoundsHeights2020}.

Let us start with the evaluation of polynomials; the following
proposition is a slight generalization of
\cite[Prop.~B.7.1]{hindry_DiophantineGeometry2000}.

\begin{prop}
  \label{prop:multivariate-evaluation}
  Let~$d\geq 0$, let~$P\in L[Y_1,\ldots, Y_n]$ be a polynomial of
  total degree at most~$d$, let $1\leq m\leq n$, and let
  $y_1,\ldots, y_m\in L$. Write
  $Q = P(y_1,\ldots, y_m, Y_{m+1}, \ldots, Y_n)$. Then
  \begin{displaymath}
    \h(Q) \leq \h(P) + m\log(d+1) + d \h(y_1,\ldots,y_n).
  \end{displaymath}
  More generally, if\, $\mathcal{I}_1\sqcup\cdots\sqcup \mathcal{I}_r$
  is a partition of~$\Zint{ 1, m}$, and if~$d_k\geq 0$ denotes an
  upper bound on the total degree of~$P$ in the variables~$Y_i$ for
  $i\in \mathcal{I}_k$, then
  \begin{displaymath}
    \h(Q) \leq \h(P) + \sum_{k=1}^r (\# \mathcal{I}_k) \log(d_k +
    1 ) + \sum_{k=1}^r d_k \h\bigl((y_i)_{i\in \mathcal{I}_k}\bigr).
  \end{displaymath}
\end{prop}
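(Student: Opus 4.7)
The plan is to proceed place by place using the definition of the height, estimate each local absolute value of the coefficients of~$Q$ separately at nonarchimedean and archimedean places, then sum with the weights~$d_v/d_L$.

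Write $P = \sum_{k\in\N^n} c_k Y_1^{k_1}\cdots Y_n^{k_n}$, where the sum is over tuples~$k$ with $k_1+\cdots+k_n\leq d$. Expanding the definition of~$Q$, each coefficient of~$Q$ has the form
\begin{displaymath}
  \sum_{(k_1,\ldots,k_m)} c_{(k_1,\ldots,k_n)}\, y_1^{k_1}\cdots y_m^{k_m}
\end{displaymath}
where~$(k_{m+1},\ldots,k_n)$ is fixed and the sum ranges over tuples satisfying $k_1+\cdots+k_m \leq d$.  Then I would estimate the~$v$-adic absolute value of such a coefficient.  At a nonarchimedean place~$v\in \V_L^0$, the ultrametric inequality gives
\begin{displaymath}
  |Q|_v \leq |P|_v \cdot \max_{1\leq i\leq m}\max\{1,|y_i|_v\}^d.
\end{displaymath}
At an archimedean place~$v\in \V_L^\infty$, the triangle inequality produces the same bound up to the number of tuples~$(k_1,\ldots,k_m)$ in play, which is at most~$(d+1)^m$; hence
\begin{displaymath}
  |Q|_v \leq (d+1)^m\,|P|_v\cdot\max_{1\leq i\leq m}\max\{1,|y_i|_v\}^d.
\end{displaymath}

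Taking logarithms (and first replacing these bounds by~$\max\{1,|Q|_v\}$ with minor adjustments), multiplying by~$d_v/d_L$, and summing over all places, I would obtain
\begin{displaymath}
  \h(Q)\leq \h(P) + m\log(d+1)\sum_{v\in\V_L^\infty}\frac{d_v}{d_L} + d\,\h(y_1,\ldots,y_m).
\end{displaymath}
The archimedean contribution sums to~$1$, which yields the claimed inequality (the $y_n$ in the statement being a typo for $y_m$).

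For the partition version, I would repeat the same argument but refine the combinatorial count: the monomials contributing to a coefficient of~$Q$ satisfy $\sum_{i\in \mathcal{I}_k}k_i\leq d_k$ for each~$k$, so their number is bounded by~$\prod_{k=1}^r(d_k+1)^{\#\mathcal{I}_k}$, and the product~$\prod_i\max\{1,|y_i|_v\}^{k_i}$ decomposes as~$\prod_k \max_{i\in \mathcal{I}_k}\max\{1,|y_i|_v\}^{d_k}$. Summing the logarithms as before delivers the refined inequality. The argument is essentially a bookkeeping exercise; the only subtlety is the combinatorial count at archimedean places, which is what produces the~$\log(d+1)$ terms.
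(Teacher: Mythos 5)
Your argument is correct and is essentially the paper's proof: the same local estimates (ultrametric bound at finite places, triangle inequality with the monomial count $\prod_k(d_k+1)^{\#\mathcal{I}_k}$ at infinite places), followed by taking logarithms with $\max\{1,\cdot\}$ and summing with weights $d_v/d_L$, using $\sum_{v\in\V_L^\infty} d_v/d_L=1$; the paper simply proves the partition version directly and deduces the first statement as the case of a one-block-per-variable refinement. Your remark that $\h(y_1,\ldots,y_n)$ should read $\h(y_1,\ldots,y_m)$ is also right.
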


\begin{proof}
  It is enough to prove the second statement. If $v\in\V_L^0$, we have
  \begin{displaymath}
    \bigl|P(y_1,\ldots,y_m, Y_{m+1},\ldots, Y_n) \bigr|_v \leq |P|_v \prod_{k=1}^r
    \Bigl (\max\bigl\{1, \max_{i\in \mathcal{I}_k} |y_i|_v \bigr\}\Bigr)^{d_k}.
  \end{displaymath}
  If $v\in\V_L^\infty$, the same estimate holds after multiplying the
  right hand side by the number of possible monomials in
  $Y_1,\ldots, Y_m$, which is
  \begin{displaymath}
    \prod_{k=1}^r (d_k + 1)^{\# \mathcal{I}_k}.
  \end{displaymath}
  Taking logarithms and summing gives the result.
\end{proof}

As a consequence, we can bound the height of a monic polynomial by the
height of its roots.

\begin{prop}
  \label{prop:pol-roots}
  Let $Q\in L[Y]$ be monic of degree~$d$, and let
  $\alpha_1,\dots,\alpha_d$ be its roots in the algebraic
  closure of~$L$. Then
  \begin{displaymath}
    \h(Q)\leq \sum_{i=1}^d \h(\alpha_k) + d\log 2.
  \end{displaymath}
\end{prop}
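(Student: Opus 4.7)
The plan is to apply, place-by-place, the standard comparison between the Gauss norm of a monic polynomial and the Mahler measure of its roots, then sum with the appropriate weights.

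First I would pass to a number field $L' \supseteq L$ that contains all the roots $\alpha_1, \dots, \alpha_d$. Since heights are independent of the ambient field, there is no harm in computing $\h(Q)$ and each $\h(\alpha_i)$ using the places of $L'$. For each place $w$ of $L'$, write $M_w(Q) = \prod_{i=1}^{d} \max\{1, |\alpha_i|_w\}$ for the local Mahler measure; summing $\log M_w(Q)$ with weights $d_w/d_{L'}$ gives exactly $\sum_i \h(\alpha_i)$.

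Next I would prove the two local estimates. For a nonarchimedean place $w$, I would use Gauss's lemma: $|\cdot|_w$ is multiplicative on $L'[Y]$, and $|Y - \alpha|_w = \max\{1, |\alpha|_w\}$ for any $\alpha \in L'$ (since the coefficients are $1$ and $\alpha$, and we use the ultrametric). Factoring $Q = \prod_i (Y - \alpha_i)$ therefore gives
\[
|Q|_w = \prod_{i=1}^{d} \max\{1, |\alpha_i|_w\} = M_w(Q).
\]
For an archimedean place $w$, the $k$-th coefficient of $Q$ is $\pm e_{d-k}(\alpha_1, \dots, \alpha_d)$, so each $|a_k|_w \leq \binom{d}{k} M_w(Q)$. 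Taking the maximum over $k$ and then bounding $\binom{d}{\lfloor d/2 \rfloor} \leq 2^d$ yields $|Q|_w \leq 2^d M_w(Q)$.

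Finally I would combine the estimates. Because $Q$ is monic we have $|Q|_w \geq 1$ everywhere, so $\h(Q) = \sum_w (d_w/d_{L'}) \log |Q|_w$. Splitting the sum over $\V_{L'}^0$ and $\V_{L'}^\infty$ and applying the two estimates above gives
\[
\h(Q) \;\leq\; \sum_{i=1}^{d} \h(\alpha_i) \;+\; d \log 2 \cdot \sum_{w \in \V_{L'}^\infty} \frac{d_w}{d_{L'}} \;=\; \sum_{i=1}^{d} \h(\alpha_i) + d \log 2,
\]
using that the archimedean local degrees sum to $1$. There is no real obstacle here; the only point requiring some care is keeping the normalizations consistent when passing from $L$ to $L'$, which is handled by the product-formula-based invariance of heights recalled in Section~\ref{sub:heights}.
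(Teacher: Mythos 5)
Your proof is correct, but it follows a genuinely different route from the paper. The paper disposes of this proposition in two lines by applying its own Proposition~\ref{prop:multivariate-evaluation} to the multivariate polynomial $P=\prod_{k=1}^d(Y_{d+1}-Y_k)$, with $m=d$, $y_k=\alpha_k$ and the partition into singletons $\mathcal{I}_k=\{k\}$, $d_k=1$: since the coefficients of $P$ lie in $\{-1,0,1\}$ one has $\h(P)=0$, and the general evaluation bound immediately yields $\h(Q)\leq \sum_k \h(\alpha_k)+d\log 2$. You instead give the classical direct argument comparing the Gauss norm of $Q$ with the local Mahler measures: Gauss's lemma at the nonarchimedean places (where you in fact get equality $|Q|_w=\prod_i\max\{1,|\alpha_i|_w\}$), the bound $|a_k|_w\leq\binom{d}{k}\prod_i\max\{1,|\alpha_i|_w\}$ via elementary symmetric functions at the archimedean places, monicity to drop the $\max\{1,\cdot\}$ in the definition of $\h(Q)$, and the facts recalled in \S\ref{sub:heights} (invariance under enlarging the field, $\sum_{w\in\V_{L'}^\infty} d_w/d_{L'}=1$) to sum up. All the local estimates and the bookkeeping are right, and the normalization issues you flag are indeed the only delicate point. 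What each approach buys: the paper's proof is shorter because it recycles an already-proved evaluation estimate and needs no new local analysis; yours is self-contained, makes the nonarchimedean contribution exact, and with a matching lower bound at the archimedean places would give the two-sided comparison $|\h(Q)-\sum_i\h(\alpha_i)|\leq d\log 2$, which the paper's argument does not directly provide. The constant $d\log 2$ is the same in both.
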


\begin{proof}
  Apply Proposition~\ref{prop:multivariate-evaluation} on the multivariate
  polynomial
  \begin{displaymath}
    P = \prod_{k=1}^d (Y_{d+1} - Y_k)
  \end{displaymath}
  with $m = d$, $y_k = \alpha_k$, and $\mathcal{I}_k = \{k\}$. Since the
  coefficients of~$P$ all belong to $\{-1, 0, 1\}$, we have $\h(P) = 0$.
\end{proof}

Conversely, the height of a univariate polynomial over~$L$ controls
the height of its roots.

\begin{prop}
  \label{prop:pol-root}
  Let $P\in L[Y]\backslash\{0\}$, and let~$\alpha$ be a root of~$P$.  Then
  \begin{displaymath}
    h(\alpha) \leq h(P) + \log(2).
  \end{displaymath}
\end{prop}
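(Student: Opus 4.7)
The approach is to prove a place-by-place inequality from the identity $P(\alpha)=0$ over a number field containing $\alpha$, then sum the local bounds and cancel the leading coefficient via the product formula.

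Write $P=\sum_{k=0}^d a_kY^k$ with $a_d\neq 0$. The case $\alpha=0$ is immediate since $\h(\alpha)=0$, so I would assume $\alpha\neq 0$ and work over $L'=L(\alpha)$, using that heights do not depend on the ambient field. The target local estimate is
\[
  \log\max(1,\abs{\alpha}_v)\;\leq\;\log\abs{P}_v-\log\abs{a_d}_v+\eps_v\log 2, \qquad v\in\V_{L'},
\]
with $\eps_v=1$ if $v$ is archimedean and $\eps_v=0$ otherwise, and $\abs{P}_v=\max_k\abs{a_k}_v$ as in Definition~\ref{def:heights}.

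For a nonarchimedean $v$, the ultrametric inequality applied to $a_d\alpha^d=-\sum_{k<d}a_k\alpha^k$ yields $\abs{a_d}_v\abs{\alpha}_v\leq\max_{k<d}\abs{a_k}_v\leq\abs{P}_v$ whenever $\abs{\alpha}_v\geq 1$; and when $\abs{\alpha}_v<1$ the estimate is trivial, since $\abs{P}_v/\abs{a_d}_v\geq 1$ automatically. For an archimedean $v$, I would invoke the classical Cauchy bound $\abs{\alpha}_v\leq 1+\max_{k<d}\abs{a_k/a_d}_v$, proved in one line from the same relation and a geometric-series estimate. Combined with $\abs{P}_v/\abs{a_d}_v\geq 1$, this upgrades to $\max(1,\abs{\alpha}_v)\leq 2\,\abs{P}_v/\abs{a_d}_v$.

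Summing the local inequalities with weights $d_v/d_{L'}$ then concludes the argument: the term $\sum_v(d_v/d_{L'})\log\abs{a_d}_v$ vanishes by the product formula applied to $a_d\in (L')^\times$; the archimedean weights $\sum_{v\in\V_{L'}^\infty}d_v/d_{L'}$ sum to $1$, producing exactly the additive $\log 2$; and $\sum_v(d_v/d_{L'})\log\abs{P}_v\leq\h(P)$ because $\log x\leq\log\max(1,x)$ pointwise. The only delicate step is securing an \emph{absolute} constant (hence $\log 2$) at archimedean places rather than a $d$-dependent one that a direct triangle-inequality estimate would yield; this is precisely what Cauchy's bound provides.
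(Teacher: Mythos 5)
Your proof is correct and follows essentially the same route as the paper: a place-by-place local estimate (ultrametric inequality at finite places, a Cauchy/geometric-series bound at archimedean ones), summed over all places of a field containing $\alpha$, with the archimedean weights summing to $1$ and producing the $\log 2$. The only difference is cosmetic: the paper first reduces to monic $P$, whereas you keep the leading coefficient $a_d$ and cancel it via the product formula, which in fact makes explicit the normalization step the paper's ``we can assume $P$ is monic'' leaves implicit.
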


\begin{proof}
  We reproduce the proof given in a lecture by F.~Pazuki. We can
  assume that~$P$ is monic.  Let~$v\in \V_L$. We want to show that
  $|\alpha|_v\leq |P|_v$ if~$v\in\V_L^0$, and $|\alpha|_v\leq 2|P|_v$
  if~$v\in \V_L^\infty$. Since~$P$ is monic, we always
  have~$|P|_v\geq 1$. Write~$P = X^n + \sum_{k=0}^{n-1} c_k Y^k$, for
  some~$n\geq 1$.

  If~$v\in \V_L^0$, we can assume that~$|\alpha|_v\geq 1$. Then
  \begin{displaymath}
    |\alpha|_v = \left|\sum_{i=0}^{n-1} c_k \alpha^k\right|_v \leq |P|_v |\alpha|_v^{n-1},
  \end{displaymath}
  so~$|\alpha|_v\leq |P|_v$.

  If~$v\in \V_L^\infty$, we can assume that~$|\alpha|_v\geq 2$. Then,
  by the triangle inequality, we obtain
  \begin{displaymath}
    |\alpha|_v\leq |P|_v |\alpha_v|^{n-1}\left(1 + \frac{1}{|\alpha|_v} 
      + \cdots +\frac{1}{|\alpha|_v^{n-1}}\right) \leq 2|\alpha|_v^{n-1}|P|_v,
  \end{displaymath}
  so~$|\alpha|_v\leq 2|P|_v$. Taking logarithms and summing over all
  places of~$L$ yields the result.
\end{proof}

We now turn to the more difficult questions of giving upper bounds on
the height of a polynomial or rational fraction from its values at
special points. Our choice is to consider (almost) consecutive
integers.

\begin{prop}[({\cite[Prop.~1.1]{kieffer_UpperBoundsHeights2020}})]
  \label{prop:main-poly}
  Let $\Zint{A,B}$ be an interval in~$\Z$.  Write $D=B-A$ and
  $M=\max\{\abs{A},\abs{B}\}$.  Let~$d\geq 1$, let~$P\in L[Y]$ be a
  univariate polynomial of degree at most~$d$, let~$N\geq d+1$, and
  let $y_1,\ldots,y_N$ be distinct elements of~$\Zint{A,B}$.
  Let~$H\geq 0$, and assume that $\h(P(y_i))\leq H$ for every
  $1\leq i\leq N$. Then we have
  \begin{displaymath}
    \h(P) \leq \frac{N}{N-d} H + D\log(D) + d\log(2M) + \log(d+1).
  \end{displaymath}
\end{prop}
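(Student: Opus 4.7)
The approach I would take is Lagrange interpolation combined with a place-by-place application of the product formula on $L$. The point is that since $\deg P \leq d$, any $d+1$ of the $N$ values $P(y_i)$ suffice to determine $P$, and the extra $N-d-1$ evaluations provide redundancy that sharpens the naive constant to $\frac{N}{N-d}$.

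First, I would fix a subset $S \subset \{1,\ldots,N\}$ of size $d+1$ and write, via Lagrange,
\begin{displaymath}
  P(Y) = \sum_{k \in S} P(y_k)\, L_{S,k}(Y), \qquad L_{S,k}(Y) = \prod_{j \in S \setminus \{k\}} \frac{Y - y_j}{y_k - y_j}.
\end{displaymath}
At each place $v \in \V_L$, this expresses the coefficients of $P$ as $L$-linear combinations of the values $\{P(y_k)\}_{k\in S}$, with weights read off from $L_{S,k}$. At an archimedean place, the numerator $\prod_{j\neq k}(Y - y_j)$ has coefficients of absolute value at most $(2M)^d$, and the denominator $\prod_{j\neq k}(y_k - y_j)$ is a nonzero integer of absolute value at most $D^d$; the triangle inequality contributes an extra factor $d+1$. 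At a non-archimedean place, the numerator coefficients are $v$-adic integers, so $|L_{S,k}|_v$ is controlled by the inverse $v$-adic valuation of the integer denominator, and the ultrametric inequality removes the $(d+1)$ factor.

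Second, and this is where the sharp coefficient $\frac{N}{N-d}$ enters, I would not keep $S$ fixed but rather choose $S=S_v$ depending on the place $v$, so as to minimize the $v$-adic contribution of the Lagrange denominators. There are $\binom{N}{d+1}$ possible subsets, and each $y_i$ lies in $\binom{N-1}{d}$ of them; the ratio $\binom{N-1}{d}/\binom{N}{d+1} = (d+1)/N$ is what will eventually produce the factor $\frac{N}{N-d}$, via a convex-combination argument that averages the local Lagrange inequalities over all subsets and then applies the product formula to the integer differences $y_k - y_j$. Finally, I would assemble the local inequalities: the archimedean bounds on the numerators contribute $d\log(2M)$, the triangle inequality contributes $\log(d+1)$, and the combinatorial control of the denominators across all places of $L$ contributes $D\log D$.

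The main obstacle is the second step: a naive Lagrange at a single subset yields $h(P) \leq H + (\text{error})$ if one takes the best subset, or $h(P) \leq (d+1) H + (\text{error})$ if one bounds the Lagrange sum crudely; neither matches the target $\frac{N}{N-d} H$, which smoothly interpolates between these extremes and tends to $H$ as $N \to \infty$. Achieving this exact constant requires a careful place-by-place bookkeeping that genuinely exploits the combinatorics of all $\binom{N}{d+1}$ subsets simultaneously, which is the technical heart of the argument in~\cite{kieffer_UpperBoundsHeights2020}.
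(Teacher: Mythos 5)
First, note that the paper does not prove this statement at all: it is quoted verbatim from \cite[Prop.~1.1]{kieffer_UpperBoundsHeights2020}, so there is no in-paper argument to compare yours with; your sketch has to stand on its own as a proof, and it does not. The genuine gap is exactly at the step that produces the constant $\frac{N}{N-d}$, which you yourself defer to the cited reference. The averaging you describe over all $\binom{N}{d+1}$ subsets does not deliver it: if at a place $v$ you bound $\max_{k\in S}\log^+|P(y_k)|_v$ by $\sum_{k\in S}\log^+|P(y_k)|_v$ and average over subsets, the weight $(d+1)/N$ per point gives, after summing over places, $\frac{d+1}{N}\cdot N\,H=(d+1)H$, i.e.\ the crude bound; if instead you keep the maximum, you cannot sum over places, because the index realizing the maximum varies with $v$, so $\sum_v \frac{d_v}{d_L}\max_{k\in S}\log^+|P(y_k)|_v$ is only bounded by $\sum_{k\in S}\h(P(y_k))\leq (d+1)H$. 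For the same reason your remark that ``the best single subset yields $\h(P)\leq H+\text{error}$'' is incorrect. Moreover you propose to choose $S_v$ so as to minimize the \emph{denominator} contribution, which is not where the gain comes from.

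The mechanism that actually yields $\frac{N}{N-d}H$ is an order-statistics argument on the \emph{values}: for each place $v$, take $S_v$ to be the $d+1$ points where $|P(y_i)|_v$ is smallest. Then $\max_{k\in S_v}\log^+|P(y_k)|_v$ is the $(d+1)$-th smallest of the $N$ numbers $\log^+|P(y_i)|_v$; since $N-d$ of these numbers are at least as large, it is at most $\frac{1}{N-d}\sum_{i=1}^N \log^+|P(y_i)|_v$, and summing over places gives at most $\frac{1}{N-d}\sum_{i=1}^N \h(P(y_i))\leq \frac{N}{N-d}H$. The denominators are then handled separately: because $S_v$ depends on $v$ you cannot apply the product formula to a single integer, but each $\prod_{j\in S_v\setminus\{k\}}(y_k-y_j)$ divides the full product $\prod_{j\neq k}(y_k-y_j)$, a product of distinct nonzero integers of absolute value at most $D$, so the total nonarchimedean loss is at most $\log$ of such a product, which is where $D\log D$ comes from; the archimedean side gives $d\log(2M)+\log(d+1)$ as you say. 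Your first paragraph (local Lagrange estimates) is fine, but without the place-wise selection of the $d+1$ smallest values and the inequality above, the stated bound is not proved.
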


Note that the bound on~$\h(P)$ is of the order of~$dH$ when~$N=d+1$,
as suggested by the Lagrange interpolation formula. On the other hand,
if we take for instance~$N=2d$, then the bound on~$\h(P)$ is roughly
in~$O(H)$. This remark will be crucial in~§\ref{sub:height-end}, when
we consider the evaluation of multivariate polynomials in each
variable successively.

\begin{prop}[({\cite[Prop.~1.2]{kieffer_UpperBoundsHeights2020}})]
  \label{prop:main-frac}
  Let $\Zint{A,B}$ be an interval in~$\Z$.  Write $D=B-A$ and
  $M=\max\{\abs{A},\abs{B}\}$. Let~$d\geq 1$, and let~$\Frac\in L(Y)$
  be a univariate rational fraction of degree at most~$d$. Let~$S$ be
  a subset of~$\Zint{A,B}$ containing no poles of~$\Frac$,
  let~$\eta\geq 1$, and let $H\geq \max\{4, \log(2M)\}$.  Assume that
  \begin{enumerate}
  \item $\h(\Frac(y))\leq H$ for every $y\in S$.
  \item $S$ contains at least $D/\eta$ elements.
  \item $D \geq \max\{\eta d^3 H, 4 \eta d d_L\}$.
  \end{enumerate}
  Then we have
  \begin{displaymath}
    \h(F) \leq H + C_L\eta d\log (\eta dH) + d \log(2M) + \log(d+1),
  \end{displaymath}
  where~$C_L$ is a constant depending only on~$L$. We can take
  $C_\Q=960$.
\end{prop}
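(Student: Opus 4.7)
The plan is to reduce to the polynomial bound of Proposition~\ref{prop:main-poly}, whose leading constant $N/(N-d)$ approaches $1$ as $N/d\to\infty$. Hypothesis~(3), namely $D\geq\eta d^3H$, together with hypothesis~(2), guarantees $|S|\geq D/\eta\gg d$, so this ratio can indeed be driven close to $1$, giving hope that the leading coefficient in front of $H$ can be pushed down to exactly $1$ in the final bound.

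First, I would write $F=P/Q$ in lowest terms with $\max\{\deg P,\deg Q\}\leq d$ and $Q$ monic, and extract an initial bound on the denominator via rational interpolation. Picking $2d+1$ consecutive integers $y_0,\ldots,y_{2d}$ in $S$, the Cauchy--Jacobi formula (or equivalently Cramer's rule applied to a Sylvester-type linear system in the unknown coefficients of $P$ and $Q$) expresses each coefficient of $P$ and $Q$ as a ratio of $(2d+1)\times(2d+1)$ determinants whose entries are $y_i^k$ and $F(y_i)y_i^k$. Hadamard's inequality at the archimedean places then produces a first bound of the form $\h(Q)=O(dH+d^2\log(2M))$; the nonarchimedean contribution is controlled by the arithmetic of $L$, and this is where the constant $C_L$ first enters the picture.

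Next, I would apply Proposition~\ref{prop:main-poly} to the polynomial $P=FQ$, evaluated at all points of $S$. Each value $P(y)=F(y)Q(y)$ has height at most $H+\h(Q)$; the degree of $P$ is at most $d$; and we may take $N=|S|\geq D/\eta$. With $N\gg d$, the factor $N/(N-d)=1+O(d/N)$ is so close to $1$ that the $\h(Q)$ contribution can be absorbed into the logarithmic error terms in $\log(\eta dH)$, thanks precisely to the strong hypothesis $D\geq\eta d^3H$. Combining the resulting bound on $\h(P)$ with the earlier bound on $\h(Q)$ yields the claimed estimate on $\h(F)$.

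The main obstacle is to end up with leading coefficient exactly $1$ in front of $H$, rather than $1+\varepsilon$: the $\h(Q)$ contribution must be absorbed entirely into the lower-order error, which forces a careful balancing between the rough interpolation bound for $Q$ and the sharp polynomial bound for $P$. This is precisely the reason why the hypothesis on $D$ is cubic in $d$ and linear in $H$: weaker lower bounds on $D$ would leave an unabsorbed multiple of $H$. A secondary difficulty is the explicit computation of $C_L$ over a general number field $L$: one must control $p$-adic sizes of the interpolation determinants, accounting for common prime factors among the differences $y_i-y_j$ and for the ramification in $L/\Q$; this is presumably where the explicit estimate $C_\Q=960$ comes from in the case $L=\Q$.
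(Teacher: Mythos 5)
A preliminary remark: this proposition is not proved in the paper at all --- it is imported verbatim from \cite[Prop.~1.2]{kieffer_UpperBoundsHeights2020} --- so your sketch has to stand on its own, and as it stands it has a genuine gap, in fact two. The decisive one is the absorption claim in your last step. Write $H'$ for the height bound on the values $P(y)=\Frac(y)Q(y)$; then $H'\geq H+\h(Q)$ up to evaluation terms, and your first step can only give $\h(Q)=O(dH+d^2\log(2M))$: each Cramer/Hadamard determinant is a sum of products of roughly $d+1$ of the values $\Frac(y_i)$, so a contribution of order $(d+1)H$ is unavoidable, and the determinant bound cannot detect that the true reduced denominator actually has height $\leq H+{}$lower order. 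Making $N$ large drives the factor $N/(N-d)$ to $1$, but that only affects the \emph{multiplicative} constant; the \emph{additive} term $\h(Q)\approx 2dH$ sitting inside $H'$ survives untouched, so the best your scheme can produce is a bound of order $dH$, not $H$ plus logarithmic terms. The hypothesis $D\geq \eta d^3H$ never enters your estimates in a way that cancels an additive multiple of $H$, so it cannot rescue this. (A minor point: $S$ need not contain consecutive integers, but interpolation at arbitrary distinct points is fine, so that is harmless.)

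The second gap is the term $D\log(D)$ in Proposition~\ref{prop:main-poly}, where $D$ is the amplitude of the interval containing the evaluation points. If you evaluate at all of $S$, then $D\geq \eta d^3H$ and $D\log(D)\geq \eta d^3H\log(\eta d^3 H)$, which is far larger than the allowed error $C_L\eta d\log(\eta dH)+d\log(2M)$. If instead you pass to a short subinterval (by averaging, some subinterval of length $D'$ contains at least $D'/\eta$ points of $S$), then keeping $D'\log(D')$ admissible forces $D'=O(\eta d)$, hence $N=O(d)$ points, so $N/(N-d)$ is a constant $>1$ rather than $1+o(1)$ --- and the $O(dH)$ term from the first gap dominates in any case. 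This tension is exactly why the naive ``interpolate, then apply the polynomial bound to $\Frac Q$'' route cannot give leading coefficient $1$ in front of $H$. The cited proof avoids it by working place by place: one bounds $\abs{P}_v$ and $\abs{Q}_v$ at each place of $L$ using the fact that a degree-$d$ polynomial cannot be $v$-adically small at too many integers of a long interval (root counting modulo prime powers at finite places, Chebyshev-type lower bounds at archimedean ones); that local analysis is where the condition $D\geq\max\{\eta d^3H, 4\eta d\, d_L\}$, the dependence on $d_L$, and the explicit constant $C_\Q=960$ actually come from.
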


The bound on~$\h(\Frac)$ given in Proposition~\ref{prop:main-frac} is
roughly in~$O(H)$ as well, but the number of evaluation points that we
have to consider is bounded from below in terms of~$H$.

\subsection{Heights of abelian varieties}
\label{sub:heights-AV}

We fix a PEL setting as in~§\ref{sub:modeq}, and keep the notation
used there. We also write $\Scal = \Gamma\backslash X_+$,
where~$\Gamma$ is a subgroup of~$G(\Q)_+$.

Different types of heights can be defined for an abelian variety~$A$
over~$\Qbar$. The \emph{Faltings height}~$h_F(A)$ is defined in
\cite[§3]{faltings_EndlichkeitssaetzeFuerAbelsche1983} in terms of
Arakelov degrees of metrized line bundles on~$A$. If~$A$ is given a
principal polarization~$\Lcal$, and $r\geq 2$ is an even integer, we
can also define the \emph{Theta height of level~$r$} of~$(A,\Lcal)$,
denoted by~$h_{\Theta,r}(A,\Lcal)$, as the projective height of
level~$r$ theta constants of~$(A,\Lcal)$
\cite[Def.~2.6]{pazuki_ThetaHeightFaltings2012}. Finally, if~$A$ is an
abelian variety with PEL structure over~$\Qbar$ given by a
point~$z\in \Scal$ where $j_1,\ldots, j_{n+1}$ are well defined, we
can define the \emph{$j$-height} of~$A$ as
\begin{displaymath}
  h_j(A) = h\bigl(j_1(A),\ldots, j_{n+1}(A)\bigr).
\end{displaymath}
We also write $\Hbar_F(A) = \max\{1,h_F(A)\}$ and define
$\Hbar,\Hbar_{\Theta,r}$, and~$\Hbar_j$ similarly.

The goal of this section is to relate the~$j$-heights of isogenous
abelian varieties, under mild conditions related to the geometry of
the moduli space. Such a relation is known for instance in the case of
elliptic curves, taking the usual $j$-invariant as coordinate
\cite[Thm.~1.1]{pazuki_ModularInvariantsIsogenies2019}. To this end,
we relate the $j$-height with the Faltings height, since the latter
behaves well with respect to isogenies. Theta heights are an
intermediate step between concrete values of invariants and the
Faltings height. More precisely, we use the two following results.

\begin{prop}
  \label{prop:faltings-isog}
  Let~$A$,~$A'$ be abelian varieties over~$\Qbar$, and assume that an
  isogeny $\varphi\from A\to A'$ exists. Then
  \begin{displaymath}
    \bigl |h_F(A) - h_F(A') \bigr| \leq \dfrac{1}{2} \log(\deg \varphi).
  \end{displaymath}
\end{prop}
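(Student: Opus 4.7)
The plan is to invoke the classical formula of Faltings \cite[Lem.~5]{faltings_EndlichkeitssaetzeFuerAbelsche1983} describing how the Faltings height changes under an isogeny. Pass to a number field~$K$ over which $A$, $A'$ and $\varphi$ are all defined and over which both abelian varieties acquire semistable reduction. Faltings proves that
\[
  h_F(A') - h_F(A) = \tfrac{1}{2}\log(\deg\varphi) - \delta(\varphi),
\]
where $\delta(\varphi)\geq 0$ is a non-negative quantity, expressed as a sum of local contributions at the finite places of~$K$, each measuring the difference between the Hodge lattices of the Néron models of~$A$ and~$A'$ at that place (equivalently, the failure of $\varphi$ to be an isomorphism on the cotangent spaces of the integral models). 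The non-negativity of $\delta(\varphi)$ yields immediately the upper bound $h_F(A') - h_F(A) \leq \tfrac{1}{2}\log(\deg\varphi)$.

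For the reverse inequality, I would apply the same formula to the dual isogeny $\varphi^\vee\from (A')^\vee \to A^\vee$, which has the same degree as~$\varphi$. Combined with the identity $h_F(B) = h_F(B^\vee)$, valid for any abelian variety~$B$ (a consequence of the canonical duality between the Hodge bundles of~$B$ and~$B^\vee$ induced by the Poincaré bundle, cf.~\cite{faltings_EndlichkeitssaetzeFuerAbelsche1983}), this gives $h_F(A) - h_F(A') \leq \tfrac{1}{2}\log(\deg\varphi)$, which together with the first bound proves the proposition.

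The only substantive ingredient is Faltings' isogeny formula together with the non-negativity of the local correction terms $\delta(\varphi)$; both are classical, so the argument is essentially a citation and no serious obstacle is anticipated. The only minor point to be careful about is that the formula is stated for the stable Faltings height, which is why one first reduces to the semistable case by base change to a suitable~$K$; the stable Faltings height is insensitive to such a base change, so the conclusion transfers back to the heights of $A$ and $A'$ over $\Qbar$.
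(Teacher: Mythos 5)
Your argument is correct and is essentially the paper's proof: the paper disposes of this proposition by citing Faltings' Lemma~5, which is exactly the isogeny formula you invoke. One remark on your treatment of the reverse inequality: Faltings' lemma already yields the two-sided bound directly, since the local correction term $\log\#\bigl(s^*\Omega^1_{G/\mathcal{O}_K}\bigr)$ (with $G$ the kernel of the extended isogeny) satisfies $0\leq \log\#\bigl(s^*\Omega^1_{G/\mathcal{O}_K}\bigr)\leq \log\deg\varphi$, so the detour through the dual isogeny is not needed. Your detour is nevertheless valid, but be aware that the invariance $h_F(B)=h_F(B^\vee)$ is not an immediate consequence of the Poincar\'e bundle and is not proved in Faltings' paper; it is a nontrivial theorem of Raynaud (in Szpiro's seminar, Ast\'erisque 127), so if you keep that route you should cite it accordingly. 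The reduction to a semistable model over a number field is handled correctly.
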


\begin{proof}
  This is a consequence of
  \cite[Lem.~5]{faltings_EndlichkeitssaetzeFuerAbelsche1983}.
\end{proof}

\begin{thm}[{(\cite[Cor.~1.3]{pazuki_ThetaHeightFaltings2012})}]
  \label{thm:theta-faltings}
  For every~$g\geq 1$, and every even~$r\geq 2$, there exists a
  constant~$C(g,r)$ such that the following holds. Let~$(A,\Lcal)$ be
  a principally polarized abelian variety of dimension~$g$ defined
  over~$\Qbar$. Then
  \begin{displaymath}
    \Bigl|\Hbar_{\Theta,r}(A,\Lcal) - \dfrac{1}{2}\Hbar_F(A)\Bigr| \leq
    C(g,r) \log \bigl(\min\{\Hbar_F(A),
    \Hbar_{\Theta,r}(A,\Lcal)\}
    +2 \bigr).
  \end{displaymath}
  We can take
  \begin{displaymath}
    C(g,r) = 1000 r^{2g} \log^5(r^{2g}).
  \end{displaymath}
\end{thm}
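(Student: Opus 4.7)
Since this statement is cited as Corollary~1.3 of Pazuki's paper on theta heights and Faltings heights, I sketch the approach that I would take if I were to reconstruct it, rather than anything novel. The strategy is to decompose both $\Hbar_{\Theta,r}(A,\Lcal)$ and $\Hbar_F(A)$ into a sum of local contributions over the places of a field of definition, compare them place by place, and bundle the error into a single logarithmic term via the arithmetic Hilbert--Samuel/Bost-style estimates for the pluricanonical theta bundles.

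First I would set up the bundle-theoretic dictionary. The Faltings height is (up to a normalization constant) the Arakelov degree of the Hodge bundle~$\omega_A$, while level-$r$ theta constants are global sections of~$\Lcal^{\otimes r^2}$ on~$A$. Mumford's functional equation for theta functions, or equivalently the computation of the determinant of cohomology of~$\Lcal^{\otimes r}$, exhibits a canonical isomorphism of line bundles over the universal family that compares~$\det H^0(\Lcal^{\otimes r})$ with a power of~$\omega_A$. Taking degrees in Arakelov geometry yields the main term~$\tfrac{1}{2}\Hbar_F(A)$ on the right, while the level-$r$ theta constants provide a specific projective embedding that computes~$\Hbar_{\Theta,r}$; the discrepancy is the difference between evaluating the canonical section at the origin and the arithmetic degree of the bundle, which must be bounded at each place.

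At the archimedean places I would work with a Siegel period matrix~$\tau$ in a fundamental domain and use the Fourier expansion of the classical Riemann theta function. The logarithm of the Faltings metric at~$\infty$ is a well-known expression involving~$\det\im(\tau)$ and the norm of the Riemann theta function; the level-$r$ theta constants are explicit partial sums of similar series. The two local contributions differ by a quantity controlled by the logarithm of the smallest theta constant, which is in turn polynomial in~$r$ and linear in~$\log(1+\max\{\Hbar_F,\Hbar_{\Theta,r}\})$ on account of Jensen's formula and a diameter estimate for the image of the embedding. At the non-archimedean places, integral models of abelian varieties with level structure together with the algebraic Mumford relations between theta constants show that the local factors agree outside a controlled finite set of primes of bad reduction, and the bad primes contribute a discriminant term that is again bounded logarithmically in the Faltings height.

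The main obstacle, and the source of the explicit shape of~$C(g,r)$, is the quantitative comparison at archimedean places: one must show that theta constants cannot all be simultaneously very small unless the abelian variety approaches the boundary of the moduli space in a precisely quantified way, and translate that degeneration into a bound on~$\Hbar_F$. This is essentially an effective version of the Néron--Tate/Zarhin style argument, and the repeated applications of the arithmetic Bézout theorem needed to turn a section-vanishing statement into an Arakelov-degree estimate produce the factor $\log^5(r^{2g})$ in the final constant. Once this quantitative archimedean lemma is in place, assembling the local bounds and estimating the error by $\log(\min\{\Hbar_F,\Hbar_{\Theta,r}\}+2)$ is a mechanical consolidation.
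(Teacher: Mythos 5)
There is nothing in the paper to measure your argument against: the statement is not proved there, it is imported verbatim as Corollary~1.3 of \cite{pazuki_ThetaHeightFaltings2012} and used as a black box in Section~\ref{sec:height-bound} (in Proposition~\ref{prop:j-faltings}, and again numerically in Proposition~\ref{prop:isog-as} and \S\ref{sub:height-explicit}). So the only question is whether your sketch would reconstruct Pazuki's theorem, and as written it would not. It is a strategy outline in which every quantitative step is deferred (``controlled'', ``bounded logarithmically'', ``a mechanical consolidation''), and it offers no route to any explicit value of $C(g,r)$, let alone to the stated $1000\,r^{2g}\log^5(r^{2g})$, which is exactly the part of the statement this paper later consumes.

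The genuine gap is the step you yourself flag as ``the main obstacle'': a two-sided archimedean comparison whose error is logarithmic in $\min\{\Hbar_F(A),\Hbar_{\Theta,r}(A,\Lcal)\}+2$, which requires an effective lower bound on theta constants, i.e.\ quantitative control of degeneration toward the boundary of the moduli space expressed in terms of the height itself. Appealing to ``an effective N\'eron--Tate/Zarhin style argument'' and ``repeated applications of the arithmetic B\'ezout theorem'' names the needed black box rather than supplying it, and it misattributes where the constant comes from: in Pazuki's proof the comparison is extracted from a Moret-Bailly/Bost-type formula expressing the Faltings height through theta constants, combined with the matrix lemma (bounding the period matrix in the Siegel fundamental domain in terms of the Faltings height) and explicit estimates for theta series on that domain; the $\log^5(r^{2g})$ factor is an artifact of those explicit estimates, not of arithmetic B\'ezout. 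Until that archimedean lemma is proved with explicit constants, neither the inequality nor the value of $C(g,r)$ is established by your sketch; for the purposes of this paper, citing \cite{pazuki_ThetaHeightFaltings2012}, as the author does, is the correct and complete justification.
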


\subsection{Relating the $j$-height and the Faltings height}
\label{sub:j-faltings}

Using Theorem~\ref{thm:theta-faltings}, we can prove that the
$j$-height and the Faltings height of a generic abelian variety with
PEL structure are related.

\begin{prop}
  \label{prop:j-faltings}
  There exists a nonzero polynomial~$P\in L[Y_1,\ldots Y_{n+1}]$ and a
  positive constant~$C$ such that the following holds: if~$A$ is the
  abelian variety with PEL structure associated with a
  point~$z\in \Scal$ where $j_1,\ldots, j_{n+1}$ are well defined and
  $P(j_1,\ldots,j_{n+1}) \neq 0$, and if~$A$ is defined over~$\Qbar$, then
  \begin{displaymath}
    \dfrac{1}{C} \Hbar_F(A) \leq
    \Hbar_j(A) \leq C \Hbar_F(A).
  \end{displaymath}
\end{prop}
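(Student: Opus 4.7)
The plan is to relate $\Hbar_j(A)$ and $\Hbar_F(A)$ through the Theta height $\Hbar_{\Theta,r}(B,\Lcal)$ of a principally polarized abelian variety $(B,\Lcal)$ isogenous to $A$ by an isogeny of bounded degree, where I fix an even $r\geq 4$. Applying Theorem~\ref{thm:theta-faltings} then converts a comparison between $\Hbar_j(A)$ and $\Hbar_{\Theta,r}(B,\Lcal)$ into the desired comparison with $\Hbar_F(A)$.

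The first step is to produce $(B,\Lcal)$ from the PEL data: since the polarization type $(\Lambda_c,\psi_c)$ is fixed by the PEL setting, quotienting by a maximal isotropic subgroup of $\Lambda_c^{\perp}/\Lambda_c$ yields a principally polarized abelian variety $B$ with an isogeny $A\to B$ of degree bounded uniformly by a constant depending only on the PEL setting. Then Proposition~\ref{prop:faltings-isog} gives $\bigl|\Hbar_F(A)-\Hbar_F(B)\bigr|\leq \newC\label{cst:pf1}$.

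The second step, which is the heart of the argument, is to compare $\Hbar_j(A)$ and $\Hbar_{\Theta,r}(B,\Lcal)$ via two applications of Proposition~\ref{prop:multivariate-evaluation}. Up to passing to a finite étale cover $\Scal'$ of $\Scal$ on which the level-$r$ structure on $B$ is defined, the theta constants $\theta_0,\ldots,\theta_M$ of $(B,\Lcal)$ become values of modular forms of common weight $r/2$ on $\Scal'$, defined over a finite extension $L'$ of $L$, and realize a projective embedding of (a component of) $\Scal'$. On the other hand, the $j$-invariants, pulled back to $\Scal'$, give a birational parametrization onto their image. Since both embeddings are birational onto their images over a common dense open subset, I can write each $j_i$ as $P_i(\theta)/Q_i(\theta)$ and each affine coordinate $\theta_k/\theta_0$ as $R_k(j_1,\ldots,j_{n+1})/S_k(j_1,\ldots,j_{n+1})$, with $P_i,Q_i,R_k,S_k$ of bounded degree, the bound depending only on the PEL setting and $r$. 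Taking norms over $\Gal(L'/L)$ yields polynomials with coefficients in $L$, and I take $P$ to be the product of all denominators that appear. Then on the locus $P(j_1,\ldots,j_{n+1})\neq 0$, Proposition~\ref{prop:multivariate-evaluation} yields
\begin{displaymath}
  \Hbar_j(A) \leq \newC\label{cst:pf2}\, \Hbar_{\Theta,r}(B,\Lcal) + \newC\label{cst:pf3}
  \quad\text{and}\quad
  \Hbar_{\Theta,r}(B,\Lcal) \leq \oldC{cst:pf2}\, \Hbar_j(A) + \oldC{cst:pf3}.
\end{displaymath}

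Combining with Theorem~\ref{thm:theta-faltings} and Step~1 gives at once $\Hbar_j(A)\leq C\,\Hbar_F(A)$ for a suitable $C$. For the reverse direction, Theorem~\ref{thm:theta-faltings} produces a logarithmic error term in $\Hbar_F$; since $\log(\Hbar_F(A)+2)\leq \eps\,\Hbar_F(A)+C_\eps$ for every $\eps>0$, I can absorb this term into the left-hand side for $\eps$ small enough, obtaining $\Hbar_F(A)\leq C\,\Hbar_j(A)$ after possibly enlarging~$C$.

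The main obstacle is controlling fields of definition: the theta constants live naturally on the cover $\Scal'$ rather than on $\Scal$, so the rational identifications between $j$-invariants and theta coordinates must be descended to $L[J_1,\ldots,J_{n+1}]$ via a Galois-theoretic argument (taking products over $\Gal(L'/L)$-orbits and clearing denominators). A secondary technical point is to guarantee that all constants introduced depend only on the PEL setting and on $r$, not on $A$; this is automatic from the construction since all rational identifications are made once and for all on $\Scal'$.
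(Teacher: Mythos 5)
Your overall architecture (isogeny to a principally polarized variety of bounded degree, Proposition~\ref{prop:faltings-isog} plus Theorem~\ref{thm:theta-faltings} to pass between Faltings and Theta heights, then a rational-function comparison between Theta coordinates and the $j$-invariants, with the bad locus absorbed into~$P$) matches the paper's strategy. But the central step of your Step~2 has a genuine gap: you claim that each affine Theta coordinate $\theta_k/\theta_0$ on the cover $\Scal'$ can be written as a rational function $R_k(j_1,\ldots,j_{n+1})/S_k(j_1,\ldots,j_{n+1})$ of the pulled-back invariants alone. This is false whenever the cover $\Scal'\to\Scal$ needed to define the level-$r$ Theta structure has degree greater than one: the functions $j_1,\ldots,j_{n+1}$ generate only the subfield of deck-invariant functions on $\Scal'$, while the $\theta_k/\theta_0$ are not deck-invariant, so no such rational identity can exist. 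Your proposed fix, taking norms over $\Gal(L'/L)$, only descends the field of \emph{constants} from $L'$ to $L$; it does not address the nontrivial extension of \emph{function fields}, and taking a norm over the deck group would replace $\theta_k/\theta_0$ by a symmetric function of its conjugates, which is not what you need.

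This is precisely the point where the paper introduces an extra ingredient you are missing: a primitive element $f$ of the function field of the common cover $\widetilde{\Scal}$, chosen so that both $(j_1,\ldots,j_{n+1},f)$ and $(\theta_1/\theta_0,\ldots,\theta_k/\theta_0,f)$ generate that field. The Theta quotients are then expressed as rational functions of $(j_1,\ldots,j_{n+1},f)$ (and symmetrically for the $j_i$), and the height of $f$ at the relevant point is controlled through its minimal polynomial $P_J(j_1,\ldots,j_{n+1},X)$ over $L(\Scal)$, combining Proposition~\ref{prop:multivariate-evaluation} with Proposition~\ref{prop:pol-root} (height of a root of a polynomial of bounded height and degree). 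Without this device, or some equivalent (for instance exhibiting each $\theta_k/\theta_0(\widetilde z)$ as a root of a polynomial of bounded degree whose coefficients are rational in the $j$-values of bounded height), your chain of inequalities between $\Hbar_j(A)$ and $\Hbar_{\Theta,r}$ does not go through. The remaining parts of your argument (bounded-degree isogeny to a principally polarized variety, absorption of the logarithmic error from Theorem~\ref{thm:theta-faltings}, and uniformity of the constants in the PEL setting) are consistent with the paper's proof.
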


\begin{proof}
  By \cite[Thm.~5.17]{milne_IntroductionShimuraVarieties2005}, we can
  write $\Scal = \Gamma'\backslash X_+$ where~$\Gamma'$ is a
  congruence subgroup of~$G^\mathrm{der}$. Since
  $G^\mathrm{der}\subset \ker(\det)$, it embeds into~$\GSp_{2g}(\Q)$,
  where $2g=\dim_\Q V$. Therefore, by
  \cite[Thm.~5.16]{milne_IntroductionShimuraVarieties2005}, we can
  find a congruence subgroup~$\Gamma''$ of~$G^\mathrm{der}$ and an
  even integer~$r\geq 4$ such that~$\Gamma''\backslash X_+$ embeds in
  the moduli space~$\mathcal{A}_{\Theta,r}$ of principally polarized
  abelian varieties of dimension~$g$ with level~$r$ Theta
  structure. We have a diagram
  \begin{equation}
    \label{diag:j-faltings}
    \begin{tikzcd}
      & \widetilde{\Scal} = \widetilde{\Gamma}\backslash X_+
      \ar{ld}[swap]{p'}
      \ar{rd}{p''} & & \\
      \Scal = \Gamma'\backslash X_+ & & \Scal'' = \Gamma''
      \backslash X_+ \ar[r, hook, "\iota"] &
      \mathcal{A}_{\Theta, r}
    \end{tikzcd}
  \end{equation}
  where $\widetilde{\Gamma} = \Gamma'\cap\Gamma''$. The maps~$p'$
  and~$p''$ are finite coverings. All the varieties and maps in this
  diagram are defined over~$\Qbar$.

  The modular interpretation of diagram~\eqref{diag:j-faltings} is the
  following. Let~$(\Lambda, \psi)$ be the standard polarized lattice
  associated with the connected component~$\Scal$, as in
  Proposition~\ref{prop:modular-int-lattices}. We can find a
  sublattice~$\Lambda''\subset \nolinebreak \Lambda$, and
  $\lambda\in \Q^\times$ such that~$(\Lambda'',\lambda\psi)$ is
  principally polarized. A point~$z\in \Scal$ defines a complex
  structure~$x$ on $\Lambda\otimes\R = V(\R)$, up to action
  of~$\Gamma$. Lifting~$z$ to $\widetilde{z}\in\widetilde{\Scal}$
  corresponds to considering~$x$ up to action of~$\widetilde{\Gamma}$
  only, and this group leaves~$\Lambda''$ and its level~$r$ Theta
  structure stable. Then the image of~$\widetilde{z}$
  in~$\mathcal{A}_{\Theta, r}$ is then given by
  $(\Lambda'', x, \lambda\psi)$.

  In particular, if $\widetilde{z}\in \widetilde{\Scal}$, and
  if~$A$ and~$A''$ are the abelian varieties corresponding to the
  points~$p'(\widetilde{z})\in \Scal$
  and~$\iota \circ p''(\widetilde{z})\in \mathcal{A}_{\Theta, r}$
  respectively, then~$A$ and~$A''$ are linked by an isogeny of
  degree~$d = \#(\Lambda/\Lambda'')$.  Hence, by
  Proposition~\ref{prop:faltings-isog} and
  Theorem~\ref{thm:theta-faltings}, we have
  \begin{align*}
    \bigl|\Hbar_F(A) - 2\Hbar_{\Theta,r}(A'')\bigr|
    &\leq
      \dfrac{\log(d)}{2} + C(g,r) \log\left(\min \{\Hbar_F(A),
      \Hbar_{\Theta,r}(A'')\} + 2 + \dfrac{\log(d)}{2}\right) \\
    &\leq C_F \min\{\Hbar_F(A), \Hbar_{\Theta,r}(A'')\}
  \end{align*}
  with~$C_F = (2 + \log(d)) C(g,r)$. Therefore
  \begin{align}
    \label{eq:h-faltings}
    \Hbar_F(A) \leq (2 + C_F) \Hbar_{\Theta,r}(A''),\qquad
    \Hbar_{\Theta,r}(A'') \leq \frac{1+C_F}{2} \Hbar_F(A).
  \end{align}

  Now we relate the Theta height and the~$j$-height using relation
  between modular functions; the genericity hypothesis encoded in the
  polynomial~$P$ appears in this step. Denote by
  $\theta_0,\ldots, \theta_k$ the Theta constants of level~$r$. They
  define a projective embedding of~$\mathcal{A}_{\Theta, r}$,
  therefore the pullbacks of
  $\theta_1/\theta_0,\ldots, \theta_k/\theta_0$ generate the function
  field of~$\Scal''$. By definition, $j_1,\ldots, j_{n+1}$ are
  coordinates on~$\Scal$. To ease notation, we identify all these
  functions with their pullbacks to~$\widetilde{\Scal}$.

  By the primitive element theorem, there exists a function~$f$
  on~$\widetilde{\Scal}$ such that both $(j_1,\ldots,j_{n+1},f)$
  and~$(\theta_1/\theta_0,\ldots, \theta_k/\theta_0, f)$ are
  generating families for the function field of~$\widetilde{\Scal}$
  over~$\Qbar$. We choose
  polynomials
  \begin{displaymath}
    P_J\in \Qbar[Y_1,\ldots,Y_{n+1},X] \quad\text{and}\quad P_\Theta\in \Qbar[Y_1,\ldots,Y_k,X]
  \end{displaymath}
  such that~$P_J(j_1,\ldots,j_{n+1},X)$
  and~$P_\Theta(\theta_1/\theta_0,\ldots,\theta_k/\theta_0,X)$ are
  (non necessarily monic) minimal polynomials of~$f$ over the function
  fields of~$\Scal$ and~$\Scal''$ respectively. We also choose
  polynomials $N_{J,i}, D_{J,i}\in \Qbar[Y_1,\ldots,Y_k, X]$
  for each~$1\leq i\leq n+1$, and
  $N_{\Theta,i}, D_{\Theta,i} \in \Qbar[Y_1,\ldots,Y_{n+1},X]$
  for each~$1\leq i\leq k$, such that the following equalities hold
  on~$\widetilde{\Scal}$:
  \begin{align*}
    j_i &= \frac{N_{J,i}}{D_{J,i}}(\theta_1/\theta_0,\ldots,\theta_k/\theta_0,f)
          &\text{for each } 1\leq i\leq n+1, \text{ and}\\
    \theta_i/\theta_0 &= \frac{N_{\Theta,i}}{D_{\Theta,i}}(j_1,\ldots,j_{n+1},f)
                        &\text{for each } 1\leq i\leq k.
  \end{align*}
  Let~$\widetilde{F}$ be the smallest Zariski closed subset
  of~$\widetilde{\Scal}$ such that outside~$\widetilde{F}$, the
  following properties are all satisfied:
  \begin{itemize}
  \item all the functions~$f$, $j_i$ for~$1\leq i\leq n+1$ and
    $\theta_i/\theta_0$ for~$1\leq i\leq k$ are well defined;
  \item the polynomials~$P_J(j_1,\ldots,j_{n+1},X)$
    and~$P_\Theta(\theta_1/\theta_0,\ldots,\theta_k/\theta_0, X)$ do
    not vanish;
  \item the
    quantities~$D_{J,i}(\theta_1/\theta_0,\ldots,\theta_k/\theta_0,f)$
    for~$1\leq i\leq k$ and $D_{\Theta,i}(j_1,\ldots,j_{n+1},f)$
    for~$1\leq i\leq k$ do not vanish.
  \end{itemize}
  Then~$\widetilde{F}$ has codimension~$1$ in~$\widetilde{\Scal}$,
  hence $\U = \Scal\backslash p'(\widetilde{F})$ is open dense in
  $\Scal$. Let $P\in L[j_1,\ldots, j_{n+1}]$ be a polynomial such that
  $\{P\neq 0\}\subset \U$.

  Let $z\in \Scal$ be a point where $j_1,\ldots, j_{n+1}$ are well
  defined, take values in~$\Qbar$, and satisfy
  $P(j_1,\ldots, j_{n+1})\neq 0$. We look at the
  diagram~\eqref{diag:j-faltings}, from left to right. Lift~$z$ to a
  point~$\widetilde{z}\in \widetilde{\Scal}$; by construction,
  $\widetilde{z}\notin \widetilde{F}$. By
  Propositions~\ref{prop:multivariate-evaluation}
  and~\ref{prop:pol-root}, we have
  \begin{equation}
    \label{eq:h-f}
    \Hbar\bigl(j_1(\widetilde{z}), \ldots,
    j_{n+1}(\widetilde{z}), f(\widetilde{z})\bigr)
    \leq C \Hbar\bigl(j_1(z),\ldots, j_{n+1}(z)\bigr)
  \end{equation}
  with~$C = \h(P_J) + (n+1)\log(d_J+1) +d_J+1$, where~$d_J$ denotes the
  total degree of~$P_J$ in~$Y_1\ldots,Y_{n+1}$. Writing
  $z'' = p''(\widetilde{z})$, we also have for every $1\leq i\leq k$,
  \begin{equation}
    \label{eq:h-theta}
    \Hbar(\theta_i/\theta_0(\widetilde{z})) \leq C \Hbar\bigl(j_1(\widetilde{z}), \ldots,
    j_{n+1}(\widetilde{z}), f(\widetilde{z})\bigr)
  \end{equation}
  with
  \begin{align*}
    C &= \h(N_{\Theta,i}) + \h(D_{\Theta,i}) +
    (n+2)\bigl(\log(\deg(N_{\Theta,i})+1)+\log(\deg(D_{\Theta,i})+1)\bigr)\\
    &\quad
    + \deg(N_{\Theta,i}) + \deg(D_{\Theta,i}),
  \end{align*}
  where~$\deg$ denotes the
  total degree. Combining equations~\eqref{eq:h-f}
  and~\eqref{eq:h-theta}, we obtain
  \begin{displaymath}
    \Hbar\Bigl(\dfrac{\theta_1}{\theta_0}(z''), \ldots,
    \dfrac{\theta_k}{\theta_0}(z'')\Bigr)\leq C_\Theta \Hbar\bigl(j_1(z), \ldots,
    j_{n+1}(z)\bigr)
  \end{displaymath}
  where~$C_\Theta$ has an explicit expression in terms of the heigts
  and degrees of the polynomials~$P_J$ and~$N_{\Theta,i},D_{\Theta,i}$
  for~$1\leq i\leq k$.  Equivalently, in the notation above, we have
  \begin{displaymath}
    \Hbar_{\Theta,r}(A'')\leq C_\Theta \Hbar_j(A),
  \end{displaymath}
  so by~\eqref{eq:h-faltings}
  \begin{displaymath}
    \Hbar_F(A)\leq (2+C_F)C_\Theta \Hbar_j(A).
  \end{displaymath}
  Going through the diagram from right to left
  gives the reverse inequality
  \begin{displaymath}
    \Hbar_j(A) \leq \frac{(1+C_F)C_J}{2} \Hbar_F(A)
  \end{displaymath}
  where~$C_J$ is defined in a similar way to~$C_\Theta$ in terms of
  the polynomials~$P_\Theta$ and~$N_{J,i}, D_{J,i}$
  for~$1\leq i\leq n+1$.
\end{proof}

Assume that the integers~$r$ and~$d$, the modular function~$f$, as
well as the polynomials~$P_J,P_\Theta,N_{J,i},D_{J,i},N_{\Theta,i},$
and~$D_{\Theta,i}$ can be explicitly determined. Then both the
polynomial~$P$ and the constant~$C$ in
Proposition~\ref{prop:j-faltings} can be determined explicitly as
well. We will do this computation in a slightly different way
in~§\ref{sub:height-explicit} in the case of Igusa invariants on the
Siegel threefold.

From now on, we define~$\U$ to be the Zariski open set in~$\Scal$ where
$j_1,\ldots, j_{n+1}$ are well defined and
$P(j_1,\ldots, j_{n+1})\neq 0$.

\begin{cor}
  \label{cor:j-isog}
  Let~$C$ be the constant from Proposition~\ref{prop:j-faltings},
  let~$z$ and~$z'$ be points of~$\U$ and let~$A$ and~$A'$ be the
  abelian varieties with PEL structure associated with~$z$ and~$z'$
  respectively. Assume that~$A$ and~$A'$ are defined over~$\Qbar$, and
  are linked by an isogeny of degree~$d$. Then
  \begin{displaymath}
    \Hbar_j(A') \leq C^2(\Hbar_j(A) + \log d).
  \end{displaymath}
\end{cor}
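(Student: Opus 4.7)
The plan is to chain the two main ingredients already available: Proposition~\ref{prop:j-faltings}, which converts between $j$-height and Faltings height (up to the factor $C$) on the open set $\U$, and Proposition~\ref{prop:faltings-isog}, which controls the Faltings height under isogenies. Since both $z$ and $z'$ lie in $\U$, we are free to use the height comparison at both endpoints of the isogeny.

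Concretely, I would proceed in four short steps. First, apply the upper bound of Proposition~\ref{prop:j-faltings} at the point~$z'$ to get $\Hbar_j(A')\leq C\,\Hbar_F(A')$. Second, use Proposition~\ref{prop:faltings-isog} together with an isogeny $\varphi\from A\to A'$ (or $A'\to A$) of degree~$d$ to obtain $h_F(A')\leq h_F(A)+\tfrac{1}{2}\log d$; taking the maximum with~$1$ on both sides and using $\log d\geq 0$ yields
\begin{displaymath}
  \Hbar_F(A')\leq \Hbar_F(A)+\tfrac{1}{2}\log d.
\end{displaymath}
Third, invoke the lower bound of Proposition~\ref{prop:j-faltings} at~$z$, which rearranges to $\Hbar_F(A)\leq C\,\Hbar_j(A)$. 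Finally, combine the three inequalities:
\begin{displaymath}
  \Hbar_j(A')\leq C\bigl(\Hbar_F(A)+\tfrac{1}{2}\log d\bigr)
  \leq C\bigl(C\,\Hbar_j(A)+\log d\bigr)
  \leq C^{2}\bigl(\Hbar_j(A)+\log d\bigr),
\end{displaymath}
where the last inequality uses $C\geq 1$ (which follows from the fact that Proposition~\ref{prop:j-faltings} gives a two-sided comparison with a common constant~$C$, so necessarily $C\geq 1$).

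There is essentially no obstacle here: all the hard work has been done in Proposition~\ref{prop:j-faltings}, whose proof carefully handled the genericity condition encoded by the polynomial~$P$ and the relationship between Theta and $j$-coordinates. The only minor care required is to verify that passing from Faltings heights to their truncated versions~$\Hbar_F$ does not lose the additive term~$\tfrac12\log d$; this follows because $\Hbar_F(A)\geq 1$ and $\log d\geq 0$, so the inequality for $\Hbar_F$ is a direct consequence of the one for $h_F$.
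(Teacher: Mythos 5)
Your proof is correct and follows exactly the route the paper intends: its own proof of this corollary is just "combine Propositions~\ref{prop:faltings-isog} and~\ref{prop:j-faltings}", and your four steps (including the harmless passage from $h_F$ to $\Hbar_F$ and the observation that $C\geq 1$ absorbs the factor $\tfrac12$) are the straightforward way to carry that out.
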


\begin{proof}
  Combine Propositions~\ref{prop:faltings-isog}
  and~\ref{prop:j-faltings}.
\end{proof}

\begin{rem}
  We can presumably do better than Corollary~\ref{cor:j-isog}. For
  instance, when studying $j$-invariants of isogenous elliptic curves,
  one can prove that $|h(j(E)) - h(j(E'))|$ is bounded by logarithmic
  terms \cite[Thm.~1.1]{pazuki_ModularInvariantsIsogenies2019}. This
  is also the kind of bound provided by
  Theorem~\ref{thm:theta-faltings}. The rough estimate in
  Corollary~\ref{cor:j-isog} is sufficient for our purposes, but has
  the drawback that the constants we derive from it are very
  pessimistic.
\end{rem}

\subsection{Heights of evaluated modular equations}
\label{sub:interpolation}

Let~$\U$ (resp.~$\U'$) be an open set of~$\Scal$ (resp.~$\Tcal$) where
a relation between the~$j$-height and the Faltings height holds, as in
Proposition~\ref{prop:j-faltings}. Define $\U_\delta\subset \Scal$ to
be the Zariski open set of all points~$[x,g]\in \Scal$ such that
$[x,g]\in \U$, and moreover the images of~$[x,g]$ under the
(symmetrized) Hecke correspondence~$H_\delta$ all lie in~$\U'$: in
other words $[\sigma(x), \sigma(g k \delta)]\in \U'$ for every
$(k,\sigma)\in \nolinebreak K_0/K_{n+1}$, in the notation
of~§\ref{sub:modeq}. Finally, we define~$\V_\delta\subset L^n$ to be
the Zariski open set of all points~$(j_1,\ldots, j_n)$ where the
equation~\eqref{eq:jn+1} given by~$E(j_1,\ldots,j_n, J_{n+1})$ has~$e$
distinct roots and the following property holds: if~$j_{n+1}$ is a
root of~\eqref{eq:jn+1}, then $(j_1,\ldots, j_{n+1})$ are the
invariants of some point $z\in \U_\delta$.  In particular, the modular
equations~$\Psi_{\delta,m}$ do not have poles on~$\V_\delta$.

\begin{lem}
  \label{lem:equation-V}
  There exist a positive constant~$C$ independent of~$\delta$, and a
  nonzero polynomial $P_\delta \in L[J_1,\ldots,J_n]$ of total degree
  at most~$C \Heckedeg(\delta)$ such that
  $\{P_\delta(j_1,\ldots,j_n)\neq 0\}\subset \V_\delta$.
\end{lem}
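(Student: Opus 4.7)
The plan is to convert the open condition $(j_1,\ldots,j_n)\in\V_\delta$ into the non-vanishing of a $\Sigma$-symmetric modular form on $\Scal$ of weight $O(\Heckedeg(\delta))$, rewrite that form as a rational function in $j_1,\ldots,j_{n+1}$ via the machinery of Section~\ref{sec:degree-bound}, and eliminate $J_{n+1}$ using the discriminant of $E$ and a resultant against $E$.

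First, I would build a symmetric modular form whose zero locus contains $\Scal\setminus\U_\delta$. Let $P_\U,P_{\U'}\in L[J_1,\ldots,J_{n+1}]$ be polynomials defining $\U\subset\Scal$ and $\U'\subset\Tcal$, as furnished by Proposition~\ref{prop:j-faltings}; after replacing them by the product of their $\Sigma$-translates they may be assumed $\Sigma$-invariant. Clearing denominators with appropriate powers of the symmetric denominators $\chi_i$ of the $j_i$ produces symmetric modular forms $F_\U$ on $\Scal$ and $F_{\U'}$ on $\Tcal$, defined over $L$ and of weights independent of $\delta$, that vanish off $\U$ and $\U'$ respectively. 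The pullback $F_{\U',\delta}\defby [x,g]\mapsto F_{\U'}([x,g\delta])$ is a modular form on the preimage of $\Scal$ in $\Sh_{K'}(\C)$, so the product
\[
h = F_\U\cdot \prod_{\gamma\in K_0/K'} \act{\gamma}{F_{\U',\delta}}
\]
is $K_0$-invariant and descends, as in §\ref{sub:denom}, to a $\Sigma$-symmetric modular form on $\Scal$ defined over $L$ of weight $\wt(F_\U)+\#(K_0/K')\wt(F_{\U'}) = O(\Heckedeg(\delta))$, whose zero locus in $\Scal$ contains $\Scal\setminus\U_\delta$ by the very definition of $\U_\delta$.

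Second, I would fix, independently of $\delta$, a nonzero $\Sigma$-symmetric modular form $\chi_0$ on $\Scal$ defined over $L$ and a polynomial $P_0\in L[J_1,\ldots,J_n]$ of bounded degree vanishing on $\pi(\{\chi_0=0\})$, where $\pi=(j_1,\ldots,j_n)\from \Scal\to L^n$. After multiplying $h$ by a small power of $\chi_0$ (and absorbing the new zeros into $P_0$), one may assume $\wt(h)=m\wt(\chi_0)$. Setting $g=\chi_0^m$, Proposition~\ref{prop:gc} applied to $h/g$ produces polynomials $N\in L[J_1,\ldots,J_n][J_{n+1}]$ of degree at most $e-1$ in $J_{n+1}$ and $D\in L[J_1,\ldots,J_n]$, both of total degree $O(\Heckedeg(\delta))$, with $h/g = N(j_1,\ldots,j_{n+1})/D(j_1,\ldots,j_n)$ on $\Scal$. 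Writing $E_e$ and $\Delta_E$ for the leading coefficient and discriminant of $E$ as a polynomial in $J_{n+1}$, one then sets
\[
P_\delta = E_e\cdot \Delta_E\cdot D\cdot P_0\cdot \Res_{J_{n+1}}(N,E);
\]
the standard resultant degree bound gives $\deg P_\delta = O(\Heckedeg(\delta))$. If $P_\delta(j_1,\ldots,j_n)\neq 0$, then $E$ has $e$ distinct roots $\xi_1,\ldots,\xi_e$ at that point, the identity $\Res_{J_{n+1}}(N,E)=E_e^{\deg_{J_{n+1}}N}\prod_i N(j_1,\ldots,j_n,\xi_i)$ forces each $N(j_1,\ldots,j_n,\xi_i)$ to be nonzero, and the non-vanishing of $P_0$ and $D$ then yields $g(z_i)\neq 0$ and hence $h(z_i)\neq 0$ at the corresponding points $z_i\in\Scal$ via $h\,D = g\,N$; therefore $z_i\in \U_\delta$ and $(j_1,\ldots,j_n)\in\V_\delta$.

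The main subtlety is the bookkeeping needed to confirm that every auxiliary object introduced contributes only constants independent of $\delta$: in particular the choice of $\chi_0$ and the accompanying weight-matching trick, which rests on the finite generation of the $L$-algebra of symmetric modular forms from Proposition~\ref{prop:mf-symmetric}. Everything else is routine resultant and degree arithmetic.
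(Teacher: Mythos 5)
Your overall strategy is the same as the paper's (build a $\Sigma$-symmetric modular form of weight $O(\Heckedeg(\delta))$ whose nonvanishing detects $\U_\delta$, rewrite it in the invariants via Proposition~\ref{prop:gc}, then eliminate $J_{n+1}$ with the discriminant of~$E$ and a resultant against~$E$), but there is a genuine gap at the last step. The set~$\V_\delta$ requires that \emph{every} root $\xi_i$ of $E(j_1,\ldots,j_n,J_{n+1})$ be the tuple of invariants of an actual point of~$\Scal$ lying in~$\U_\delta$; your argument simply speaks of ``the corresponding points $z_i\in\Scal$'' without ensuring they exist. The map $z\mapsto(j_1(z),\ldots,j_{n+1}(z))$ is only dominant onto the hypersurface $\{E=0\}$ (birational modulo~$\Sigma$), so its image can miss a proper closed subset, and none of your factors $E_e$, $\Delta_E$, $D$, $P_0$, $\Res_{J_{n+1}}(N,E)$ is designed to exclude that locus: they control the geometry of~$E$ or objects pulled back from~$\Scal$, not the complement of the image. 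The paper closes exactly this hole with a $\delta$-independent polynomial~$Q$ such that any solution of~\eqref{eq:jn+1} with $Q\neq 0$ lies in the image of~$\Scal$, and includes $\Res_{J_{n+1}}(Q,E)$ (its~$R'$) as a factor of~$P_\delta$; you need the analogous factor, which costs only a $\delta$-independent constant in the degree bound.

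A secondary flaw: the weight-matching step ``after multiplying $h$ by a small power of $\chi_0$ one may assume $\wt(h)=m\wt(\chi_0)$'' cannot work as stated, since multiplying by powers of~$\chi_0$ leaves $\wt(h)$ unchanged modulo $\wt(\chi_0)$, and this residue varies with~$\delta$. This is repairable: multiply instead by one auxiliary symmetric form of bounded weight chosen from a finite, $\delta$-independent list (one per residue class), enlarging~$P_0$ to vanish on the image of its zero locus, or follow the paper and take a denominator~$\xi$ of weight exactly $\wt(\lambda^\delta)$ furnished by the projective embedding of Theorem~\ref{thm:mf-algebraic}. With those two repairs, your variant (using $g=\chi_0^m$ plus the image polynomial~$P_0$ instead of the paper's equal-weight form with divisor disjoint from~$\lambda^\delta$) is a legitimate, essentially equivalent route.
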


\begin{proof}
  Let~$E\in L[J_1,\ldots,J_{n+1}]$ be the polynomial defined
  in~§\ref{sub:modeq}, of degree~$e$ in~$J_{n+1}$, so that the
  equation satisfied by~$j_{n+1}$ on~$\Scal$ takes the
  form~$E(j_1,\ldots,j_{n+1}) = 0$.
  
  Let~$R$ be the the resultant of~$E$ and its derivative with respect
  to~$J_{n+1}$. If~$R$ does not vanish at $(j_1,\ldots, j_n)\in L^n$,
  then the polynomial $E(j_1,\ldots,j_n,J_{n+1})\in L[J_{n+1}]$
  has~$e$ distinct roots.

  Similarly, there is a polynomial~$Q\in L[J_1,\ldots, J_{n+1}]$ such
  that every tuple $(j_1,\ldots,j_{n+1})$ satisfying~\eqref{eq:jn+1}
  and such that $Q(j_1,\ldots,j_{n+1})\neq 0$ lies in the image
  of~$\Scal$. Let~$R'$ be the resultant of~$Q$ and~$E$ with respect
  to~$J_{n+1}$. If~$R'$ does not vanish at $(j_1,\ldots, j_n)$, then
  for every root~$j_{n+1}$ of~$E(j_1,\ldots,j_n,J_{n+1})$, the tuple
  $(j_1,\ldots,j_{n+1})$ lies in the image of~$\Scal$.

  Let~$\lambda, \lambda'$ be symmetric modular forms on~$\Scal$
  and~$\Tcal$ respectively, defined over~$L$, such that
  $\{\lambda\neq 0\}\subset \U$ and $\{\lambda'\neq 0\}\subset
  \U'$. These modular forms can be chosen independently of~$\delta$. As
  in~§\ref{sub:denom}, we construct the modular form
  \begin{displaymath}
    \lambda^\delta = \lambda \prod_{\gamma\in K_0/K'} \act{\gamma}{\lambda'_\delta}
  \end{displaymath}
  where~$\lambda'_\delta$ is the modular
  form~$[x,g]\mapsto \lambda'([x,g\delta])$ of level~$K'$. The modular
  form~$\lambda^\delta$ is defined over~$L$ and has weight
  \begin{displaymath}
    \wt(\lambda^\delta) = \wt(\lambda) + (\#\Sigma) \Heckedeg(\delta) \wt(\lambda').
  \end{displaymath}
  
  Modular forms realize a projective embedding of~$\Scal$ by
  Theorem~\ref{thm:mf-algebraic}; therefore, possibly after increasing
  the weight by a constant independent of~$\delta$, we can find a
  symmetric modular form~$\xi$ defined over~$L$ such that
  $\wt(\lambda^\delta) = \wt(\xi)$ and the divisors
  of~$\lambda^\delta$ and~$\xi$ have no common codimension~1
  components.  By Proposition~\ref{prop:gc}, if we write
  \begin{displaymath}
    \dfrac{\lambda^\delta}{\xi} = \sum_{k=0}^{e-1} R_k(j_1,\ldots, j_n) j_{n+1}^{\,k}
    \qquad \text{where } R_k\in L(J_1,\ldots,J_n),
  \end{displaymath}
  then $\deg R_k\leq \GC(j_1,\ldots,j_{n+1})\wt(\lambda^\delta)$ for
  every~$0\leq k\leq e-1$. Taking the resultant of the
  polynomials~$\sum R_k J_{n+1}^k$ and~$E$ with respect to~$J_{n+1}$
  yields a rational fraction~$R''\in L(J_1,\ldots,J_n)$ of total
  degree at most
  \begin{displaymath}
    (e-1) d_E + e\max_{0\leq k\leq e-1} \deg(R_k),
  \end{displaymath}
  where~$d_E$ denotes the total degree of~$E$ in~$j_1,\ldots,j_n$.
  If~$R'$,~$R''$ are well defined and do not vanish at
  $(j_1,\ldots,j_n)$, then for every root~$j_{n+1}$
  of~\eqref{eq:jn+1}, the tuple~$(j_1,\ldots,j_{n+1})$ comes from a
  point~$z\in \U_\delta$.

  We take~$P_\delta$ to be the product of~$R$, $R'$, and the numerator
  of~$R''$. The polynomials~$R$ and~$R'$ are independent of~$\delta$,
  and the degree of~$R''$ is bounded above linearly
  in~$\Heckedeg(\delta)$.
\end{proof}

If upper bounds on the degree of equations defining~$\U$ and~$\U'$ are
explicitly known, together with the polynomials~$E$ and~$Q$, then the
proof of Lemma~\ref{lem:equation-V} allows us to determine a valid
constant~$C$ explicitly.

\begin{prop}
  \label{prop:height-modeq-evaluation}
  There exists a constant~$C$, independent of~$\delta$, such that the
  following holds.  Let $(j_1,\ldots,j_{n})\in \V_\delta$, and let
  $1\leq m\leq n+1$. Then
  \begin{displaymath}
    h \bigl(\Psi_{\delta,m}(j_1,\ldots,j_n) \bigr) \leq
    C \Heckedeg(\delta)
    \bigl(\Hbar(j_1,\ldots,j_n) + \log\isogdeg(\delta) \bigr).
  \end{displaymath}
\end{prop}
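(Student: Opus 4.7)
The plan is to first bound the height of $\Psi_{\delta,m}$ at points where $J_{n+1}$ is also specialized (exploiting the isogeny interpretation of the Hecke correspondence), and then recover the full polynomial in $J_{n+1}$ by Lagrange interpolation.

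Fix $(j_1,\ldots,j_n) \in \V_\delta$. By definition of $\V_\delta$, the univariate polynomial $\widetilde{E}(J_{n+1}) := E(j_1,\ldots,j_n,J_{n+1})$ has $e$ distinct roots $j_{n+1}^{(1)},\ldots,j_{n+1}^{(e)}$ in $\Qbar$, each yielding a point $z^{(s)} \in \U_\delta$ and an abelian variety $A^{(s)}$ with PEL structure over $\Qbar$. Combining Propositions~\ref{prop:multivariate-evaluation} and~\ref{prop:pol-root} applied to $E$ gives a constant $C_E$, depending only on the PEL setting, such that $h(j_{n+1}^{(s)}) \le C_E\bigl(1+\Hbar(j_1,\ldots,j_n)\bigr)$, hence $\Hbar_j(A^{(s)})$ is bounded by a constant multiple of $\Hbar(j_1,\ldots,j_n)$.

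Next, Definition~\ref{def:modeq} expresses $\Psi_{\delta,m}(j_1,\ldots,j_{n+1}^{(s)},Y_1,\ldots,Y_m)$ as a sum of at most $[K_0:K_{m-1}] \le (\#\Sigma)\Heckedeg(\delta)$ products, each a product of at most $(n+1)(\#\Sigma)\Heckedeg(\delta)$ monic linear factors of the form $\bigl(Y_i - \gamma\cdot j_{i,\delta}(z^{(s)})\bigr)$. Since $\gamma\cdot z^{(s)} \in \U'$ by the very definition of $\U_\delta$, each such value $\gamma\cdot j_{i,\delta}(z^{(s)})$ is a $j$-invariant of an abelian variety linked to $A^{(s)}$ by an isogeny of degree $\isogdeg(\delta)$, so Corollary~\ref{cor:j-isog} bounds its height by a constant multiple of $\Hbar_j(A^{(s)}) + \log\isogdeg(\delta)$. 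Applying Proposition~\ref{prop:pol-roots} factor by factor across a summand and then summing over the summands yields a constant $C_2$ independent of $\delta$ such that
\begin{equation*}
  h\bigl(\Psi_{\delta,m}(j_1,\ldots,j_{n+1}^{(s)}, Y_1,\ldots,Y_m)\bigr) \le C_2\,\Heckedeg(\delta)\bigl(\Hbar(j_1,\ldots,j_n) + \log\isogdeg(\delta)\bigr).
\end{equation*}

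Since $\Psi_{\delta,m}(j_1,\ldots,j_n)$ has degree at most $e-1$ in $J_{n+1}$, Lagrange interpolation gives
\begin{equation*}
  \Psi_{\delta,m}(j_1,\ldots,j_n) = \sum_{s=1}^e \Psi_{\delta,m}(j_1,\ldots,j_{n+1}^{(s)})\,\ell_s(J_{n+1}),
\end{equation*}
with $\ell_s(J_{n+1}) = \prod_{t\neq s}(J_{n+1}-j_{n+1}^{(t)})\bigm/\prod_{t\neq s}(j_{n+1}^{(s)}-j_{n+1}^{(t)})$. The numerator has bounded height by Proposition~\ref{prop:pol-roots}, while the denominator equals $\widetilde{E}'(j_{n+1}^{(s)})/E_e(j_1,\ldots,j_n)$, where $E_e$ is the leading coefficient of $E$ in $J_{n+1}$; both factors have height bounded linearly in $\Hbar(j_1,\ldots,j_n)$ via Proposition~\ref{prop:multivariate-evaluation}. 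As $\ell_s$ and the specialized equation involve disjoint sets of variables, their heights add under multiplication, and summing over $s$ contributes only $\log e$. The main obstacle is precisely the control of the Lagrange denominators $\prod_{t\neq s}(j_{n+1}^{(s)}-j_{n+1}^{(t)})^{-1}$: this is where the hypothesis $(j_1,\ldots,j_n)\in\V_\delta$ enters, ensuring via Lemma~\ref{lem:equation-V} that $\widetilde{E}'$ does not vanish at any $j_{n+1}^{(s)}$. Since $E$ and $e$ depend only on the PEL setting, all the intermediate constants remain independent of $\delta$, giving the required bound.
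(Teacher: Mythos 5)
Your overall route is the paper's: specialize $J_{n+1}$ at the $e$ roots of $E(j_1,\ldots,j_n,\cdot)$, bound the height of the specialized polynomial using the isogeny interpretation (Corollary~\ref{cor:hecke-isog}) together with Corollary~\ref{cor:j-isog}, then recover the $J_{n+1}$-dependence by interpolation at those roots. Your final step is fine, and in fact your explicit Lagrange interpolation, with the denominators identified as $\widetilde{E}'(j_{n+1}^{(s)})/E_e(j_1,\ldots,j_n)$ and controlled through Propositions~\ref{prop:multivariate-evaluation} and~\ref{prop:pol-root}, is if anything more carefully justified than the paper's appeal to Proposition~\ref{prop:main-poly} at the same point.

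There is, however, a genuine accounting gap in the middle step. You bound the height of each summand of Definition~\ref{def:modeq} by the sum of the heights of its roots (Proposition~\ref{prop:pol-roots}) and then ``sum over the summands''. The height of a sum of polynomials is only controlled by the \emph{sum} of the heights of the terms (plus the logarithm of their number), so this procedure pays the height of a value $\act{\gamma_i}{j_{i,\delta}}(z^{(s)})$ once for every summand in which it occurs. There are $[K_0:K_{m-1}]$ summands, each with up to $\sum_{i\leq m} d_i$ linear factors, so the total factor count is of order $\Heckedeg(\delta)^2$ in the cases of main interest: for the Siegel equation $\Psi_{\delta,2}$ one has $\Heckedeg(\delta)$ summands each containing $\Heckedeg(\delta)$ factors, and your argument as written only yields $C\,\Heckedeg(\delta)^2\bigl(\Hbar(j_1,\ldots,j_n)+\log\isogdeg(\delta)\bigr)$, not the linear bound you assert and the proposition requires (linearity in $\Heckedeg(\delta)$ is essential downstream, in Theorem~\ref{thm:height-bound-proved}). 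The missing observation is that each value occurs with degree~$1$ in the expanded polynomial, so its height need only be paid once: apply Proposition~\ref{prop:multivariate-evaluation} a single time to the whole expression, viewed as the evaluation of one fixed polynomial (of height at most $\log [K_0:K_{m-1}]$) at the $\sum_{i=1}^{m}[K_0:K_i]\leq m(\#\Sigma)\Heckedeg(\delta)$ values, each of degree~$1$ — this is what the paper does — or equivalently argue place by place, bounding every summand locally by $\prod_y \max\{1,|y|_v\}$ taken over \emph{all} the values $y$ simultaneously. With that correction the rest of your argument goes through.
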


\begin{proof}
  Let~$\mathcal{J}$ be the set of roots of equation~\eqref{eq:jn+1} at
  $(j_1,\ldots,j_n)$, and let $j_{n+1}\in \mathcal{J}$. Let~$[x,g]$ be
  a point of~$\Scal$ describing an abelian variety~$A$ with PEL
  structure whose invariants are $(j_1,\ldots,j_{n+1})$. For
  every~$\sigma\in \Sigma$, denote by~$A_\sigma$ the abelian variety
  with PEL structure associated with the
  point~$[\sigma(x),\sigma(g)]$. Then for every
  $\gamma = (\sigma,k)\in K_0/K_m$, the
  point~$[\sigma(x),\sigma(gk\delta)]$ describes an abelian
  variety~$A_\gamma$ which is related to~$A_\sigma$ by an isogeny of
  degree~$\isogdeg(\sigma(\delta)) = \isogdeg(\delta)$, by
  Corollary~\ref{cor:hecke-isog}. Therefore, by
  Corollary~\ref{cor:j-isog}, we have
  \begin{displaymath}
    \Hbar \bigl(\act{\gamma}{j_{1,\delta}}([x,g]),\ldots,
    \act{\gamma}{j_{n+1,\delta}}([x,g]) \bigr)
    \leq C (\Hbar \bigl(j_1,\ldots, j_{n+1}) + \log \isogdeg(\delta) \bigr).
  \end{displaymath}
  where the constant~$C$ is positive and independent of~$\delta$. By
  Definition~\ref{def:modeq}, the
  polynomial $\Psi_{\delta,m}(j_1,\ldots,j_n,j_{n+1})\in
  L[Y_1,\ldots,Y_m]$ is the evaluation of a certain multivariate
  polynomial at the values~$\gamma\cdot j_{i,\delta}([x,g])$, for
  $1\leq i\leq m$ and $\gamma\in K_0/K_i$, each appearing with
  degree~1. The number of such values is
  \begin{displaymath}
    d_1 + d_1d_2 + \cdots + d_1\cdots d_m \leq m\,(\#\Sigma) \Heckedeg(\delta).
  \end{displaymath}
  Therefore, by Proposition~\ref{prop:multivariate-evaluation}, we have
  \begin{displaymath}
    \begin{aligned}
      h\bigl(\Psi_{\delta,m}(j_1,\ldots,j_{n+1})\bigr)
      & \leq m \, (\#\Sigma)
      \Heckedeg(\delta) \log(2) + m\, (\#\Sigma) \Heckedeg(\delta)\,
      C \bigl(\Hbar(j_1,\ldots,j_{n+1}) +
      \log\isogdeg(\delta) \bigr) \\
      &\leq C' \Heckedeg(\delta) \bigl(\Hbar(j_1,\ldots,j_{n+1}) +
      \log\isogdeg(\delta) \bigr).
    \end{aligned}
  \end{displaymath}
  where~$C$ and~$C'$ denote explicit constants independent
  of~$\delta$.  In order to obtain $\Psi_{\delta,m}(j_1,\ldots,j_n)$,
  we interpolate a polynomial of degree~$e-1$ in~$j_{n+1}$
  where~$\mathcal{J}$ is the set of interpolation points. By
  Propositions~\ref{prop:multivariate-evaluation}
  and~\ref{prop:pol-root}, we have
  \begin{displaymath}
    h(j_{n+1}) \leq C \Hbar(j_1,\ldots,j_n) \qquad\text{for every }
    j_{n+1}\in \mathcal{J},
  \end{displaymath}
  where~$C$ is a constant independent on~$\delta$. The result follows
  by applying Proposition~\ref{prop:main-poly} with~$N=d+1$.
\end{proof}

The proof of Proposition~\ref{prop:height-modeq-evaluation} provides
an explicit value of~$C$ if the constant from
Corollary~\ref{cor:j-isog} is known.

\subsection{Heights of coefficients of modular equations}
\label{sub:height-end}

We are ready to prove upper bounds on the heights of modular equations
(the second part of Theorem~\ref{thm:main}) using
Proposition~\ref{prop:height-modeq-evaluation} and the results on
heights of fractions given in~§\ref{sub:height-frac}. From now on, we
add subscripts to constants: for instance~$C_{\ref{prop:j-faltings}}$
denotes a constant \emph{larger than~$1$} such that
Proposition~\ref{prop:j-faltings} holds with this value
of~$C$. Moreover, we denote by~$C_{\log}$ a constant independent
of~$\delta$ such
that~$\log \Heckedeg(\delta)\leq
C_{\log}\max\{1,\log\isogdeg(\delta)\}$. By
Proposition~\ref{prop:d-l-relation}, we can take
$C_{\log} = (\dim V)^2 + \log(C_{\ref{prop:d-l-relation}})$, where~$V$
denotes the~$\Q$-vector space defining the PEL datum.

\begin{defn}
  \label{def:tree}
  We call an \emph{$(n, N_1, N_2)$-evaluation tree} a rooted tree of
  depth~$n$, arity~$N_1$ at depths~$0,\ldots,n-2$, and arity~$N_2$ at
  depth~$n-1$, such that every vertex but the root is labeled by an
  element of~$\Z$ and the sons of every vertex are distinct.

  Let~$T$ be an $(n,N_1,N_2)$-evaluation tree, and let
  $1\leq k\leq n$. The~$k$-th \emph{evaluation set}~$\mathcal{I}_k(T)$
  of~$T$ is the set of points~$(y_1,\ldots,y_k)\in\Z^k$ such
  that~$y_1$ is a son of the root, and~$y_{i+1}$ is a son of~$y_i$ for
  every $1\leq i\leq k-1$. We say that~$T$ is \emph{bounded by~$M$} if
  the absolute value of every vertex is bounded above by~$M$. We say
  that~$T$ has \emph{amplitude~$(D_1, D_2)$} if for every vertex~$y$
  of depth~$0\leq r\leq n-2$ (resp.\ depth~$n-1$) in~$T$, the sons
  of~$y$ lie in an integer interval of amplitude at most~$D_1$
  (resp.~$D_2$); by definition, the amplitude of~$\Zint{A,B}$
  is~$B-A$.

  Let~$T$ be an $(n,N_1,N_2)$-evaluation tree, let
  $a = (a_1,\ldots,a_n)\in\Z^n$, and let~$M\geq 1$ be an
  integer. Let~$\Frac$ be a coefficient of~$\Psi_{\delta,m}$ for some
  $1\leq m\leq n+1$, seen as a polynomial in the
  variables~$J_{n+1},Y_1,\ldots,Y_m$;
  hence~$\Frac\in L(J_1,\ldots,J_n)$. Write $\Frac = P/Q$ in
  irreducible form, and let $d = \deg(\Frac)$; assume that $d\geq
  1$. We say that~$T,a$ and~$M$ are \emph{valid evaluation data
    for~$\Frac$} if the following conditions are satisfied:
  \begin{enumerate}
  \item \label{cd:large-M} $T$ and~$a$ are bounded by~$M$
  \item We have
      $M \geq 2 B \log^2(B+1)$, 
    where
    \begin{displaymath}
      B= 4 C_{\ref{thm:degree-bound-proved}}^3 C_{\ref{prop:height-modeq-evaluation}}
      \Heckedeg(\delta)^4 \max\{1,\log \isogdeg(\delta)\}.
    \end{displaymath}
  \item $N_1 = 2d$ and $N_2\geq M$.
  \item $T$ has amplitude~$(4d, 2M)$.
  \item \label{cd:V-delta}
    For every $(y_1,\ldots,y_n)\in \mathcal{I}_n(T)$, the point
    \begin{displaymath}
      (j_1,\ldots,j_n) = (y_1y_n + a_1,\ldots, y_{n-1}y_n +
      a_{n-1}, y_n + a_n)
    \end{displaymath}
    belongs to~$\V_\delta$.
  \item \label{cd:coprime} For every
    $(y_1,\ldots,y_{n-1})\in \mathcal{I}_{n-1}(T)$, the two
    polynomials~$P$ and~$Q$ evaluated at the tuple
    $(y_1 Y + a_1,\ldots, y_{n-1}Y + a_{n-1}, Y + a_n)$ are coprime
    in~$L[Y]$.
  \item $Q(a_1,\ldots,a_n)\neq 0$.
  \end{enumerate}
\end{defn}

\begin{lem}
  \label{lem:interpolation-data}
  There exists a constant~$C$, independent of~$\delta$, such that the
  following holds.  Let~$\Frac$ be a coefficient of~$\Psi_{\delta,m}$
  of degree $d\geq 1$. Then there exist valid evaluation
  data~$(T, a, M)$ for~$\Frac$ such that
  \begin{equation}
    \label{eq:choice-M}
    C \Heckedeg(\delta)^4 \max\{1,\log^3(\isogdeg(\delta))\}\leq M<
    C \Heckedeg(\delta)^4 \max\{1,\log^3(\isogdeg(\delta))\} +1
  \end{equation}
  and~$M\geq 4d[L:\Q]$. We can take
  \begin{displaymath}
    C = \max\{C_1,C_2,C_3\}
  \end{displaymath}
  where
  \begin{align*}
    C_1 &= 24 C_{\ref{thm:degree-bound-proved}}^3 C_{\ref{prop:height-modeq-evaluation}}
    \bigl(4 C_{\log}
    + \log( 24 C_{\ref{thm:degree-bound-proved}}^3 C_{\ref{prop:height-modeq-evaluation}}) +1\bigr),\\
    C_2 &= 14 C_{\ref{thm:degree-bound-proved}}^2 +
          5C_{\ref{lem:equation-V}}, \quad\text{and}\quad
          C_3 = 4 C_{\ref{thm:degree-bound-proved}} [L:\Q].
  \end{align*}
\end{lem}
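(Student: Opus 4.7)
The plan is to build $(T, a, M)$ in three stages, freely using $d \leq C_{\ref{thm:degree-bound-proved}}\Heckedeg(\delta)$ (from Theorem~\ref{thm:degree-bound-proved}) and $\log\Heckedeg(\delta) \leq C_{\log}\max\{1, \log\isogdeg(\delta)\}$. First, take $M$ to be the unique integer satisfying~\eqref{eq:choice-M}; condition~\ref{cd:large-M} amounts to $M \geq 2B\log^2(B+1)$, for which I will estimate $\log(B+1) \leq \bigl(\log(4 C_{\ref{thm:degree-bound-proved}}^3 C_{\ref{prop:height-modeq-evaluation}}) + 4 C_{\log} + 2\bigr)\max\{1, \log\isogdeg(\delta)\}$, giving $2B\log^2(B+1) \leq C_1 \Heckedeg(\delta)^4\max\{1, \log^3\isogdeg(\delta)\}$. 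The bound $M \geq 4d[L:\Q]$ then follows from $C \geq C_3$.

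Second, I introduce three polynomials whose vanishing captures the bad loci of conditions~\ref{cd:V-delta}, \ref{cd:coprime} and~7 of Definition~\ref{def:tree}: set $R_5(Y, A) = P_\delta(Y_1 Y_n + A_1, \ldots, Y_{n-1}Y_n + A_{n-1}, Y_n + A_n)$, of total degree $\leq 2 C_{\ref{lem:equation-V}}\Heckedeg(\delta)$ by Lemma~\ref{lem:equation-V}; set $R_6(Y, A) = \Res_Y\bigl(\tilde P(Y), \tilde Q(Y)\bigr)$, where $\tilde P(Y) = P(Y_1 Y + A_1, \ldots, Y + A_n)$ and $\tilde Q$ analogously. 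Since the substitution $(J_i) \mapsto (Y_i Y + A_i, Y + A_n)$ is injective and the preimage of the codimension-two locus $\{P = Q = 0\}$ has codimension $\geq 2$ in the source, the polynomials $\tilde P$ and $\tilde Q$ remain coprime in $Y$, so $R_6 \neq 0$; the Sylvester matrix then gives $\deg R_6 \leq 4 C_{\ref{thm:degree-bound-proved}}^2\Heckedeg(\delta)^2$. Finally $R_7(A) = Q(A)$ has degree $\leq C_{\ref{thm:degree-bound-proved}}\Heckedeg(\delta)$.

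Third, pick $a \in \{-M,\ldots,M\}^n$ avoiding the vanishing of $R_7$ and of a fixed nonzero ``pivot'' coefficient (in the $Y$-variables) of each of $R_5$ and $R_6$; by Schwartz--Zippel this excludes at most $O(\Heckedeg(\delta)^2)(2M+1)^{n-1}$ candidates, negligible compared to $(2M+1)^n$. Then I build $T$ top-down, maintaining the invariant that at depth $r$ and for each chain $(y_1, \ldots, y_r)$, the partial specializations $R_5(y_1, \ldots, y_r, Y_{r+1}, \ldots, Y_n, a)$ and (for $r < n-1$) $R_6(y_1, \ldots, y_r, Y_{r+1}, \ldots, Y_{n-1}, a)$ are nonzero polynomials in the remaining variables. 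When extending a chain from depth $r$ to $r+1$ with $r \leq n-2$, the forbidden values of $y_{r+1}$ lie in the zero set of a single nonzero univariate polynomial of degree at most $4 C_{\ref{thm:degree-bound-proved}}^2 \Heckedeg(\delta)^2 + 2 C_{\ref{lem:equation-V}}\Heckedeg(\delta)$; an averaging argument over the amplitude-$4d$ windows in $[-M, M]$ produces a window containing at most $2d + 1$ bad values, so $2d$ good son-labels can be chosen. The choice $C \geq C_2 = 14 C_{\ref{thm:degree-bound-proved}}^2 + 5 C_{\ref{lem:equation-V}}$ is calibrated precisely so this averaging has the required margin. At the leaf depth $n$, only condition~\ref{cd:V-delta} remains, and the univariate $R_5(y_1, \ldots, y_{n-1}, Y_n, a)$ has at most $2 C_{\ref{lem:equation-V}}\Heckedeg(\delta) \ll M$ roots, so $N_2 = M$ good leaves are easily obtained in $[-M, M]$. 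The main obstacle is exactly this interior window-sliding: only $2d + 1$ slots of slack in a window of size $4d + 1$, against up to $O(d^2)$ potentially bad labels per chain coming from $R_6$, forces a global argument in $[-M, M]$ that succeeds only because $M \gtrsim \Heckedeg(\delta)^4$, the scaling provided by Stage~1.
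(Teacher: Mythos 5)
Your proposal is correct and follows essentially the same route as the paper: the same choice of $M$ from~\eqref{eq:choice-M} with $C\geq C_1$ handling the condition $M\geq 2B\log^2(B+1)$ and $C\geq C_3$ giving $M\geq 4d[L:\Q]$, the same three bad-locus polynomials (namely $P_\delta$ composed with the substitution $(Y_1Y_n+a_1,\ldots,Y_n+a_n)$, the resultant in $Y_n$ of the substituted $P$ and $Q$, and $Q$ itself) with the same degree bounds $2C_{\ref{lem:equation-V}}\Heckedeg(\delta)$ and $4d^2$, and the same window/pigeonhole construction of the tree giving $2d$ sons of amplitude $4d$ at interior depths (calibrated by $C\geq C_2$) and at least $M$ sons at the leaves. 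The only deviation is bookkeeping: you keep $a$ generic, prove nonvanishing of the resultant by a codimension argument, and only then specialize $a$ via a Schwartz--Zippel count on pivot coefficients, whereas the paper fixes $a$ first (coordinate by coordinate so that $Q(a)\neq 0$) and then asserts the resultant is nonzero --- your variant in fact makes explicit the coprimality justification the paper leaves implicit.
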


\begin{proof}
  Let~$M$ be as in~\eqref{eq:choice-M}. Condition~\ref{cd:large-M} in
  Definition~\ref{def:tree} holds because~$C\geq C_1$.

  We start by constructing the vector~$a$. Note that~$M\geq
  2d+1$. Since~$Q$ is nonzero, and has degree at most~$d$ in~$Y_1$, we
  can find $a_1\in\Z$ such that $|a_1|\leq M$ and the
  polynomial~$Q(a_1,Y_2,\ldots,Y_n)$ is nonzero. Iterating, we find a
  vector $a = (a_1,\ldots,a_n)$ bounded by~$M$ such that
  $Q(a_1,\ldots,a_n)\neq 0$.

  We now build the evaluation tree~$T$ down from the
  root. Let~$P_\delta$ be an equation for the complement of~$\V_\delta$
  as in Lemma~\ref{lem:equation-V}, and define
  \begin{displaymath}
    R_\delta = P_\delta(Y_1 Y_n + a_1,\ldots,
    Y_{n-1} Y_n + a_{n-1}, Y_n + a_n)
  \end{displaymath}
  which is a nonzero polynomial of degree at
  most~$2 C_{\ref{lem:equation-V}} \Heckedeg(\delta)$. Let~$R$ be the
  resultant with respect to~$Y_n$ of the two polynomials
  \begin{displaymath}
    P(Y_1 Y_n + a_1,\ldots,
    Y_{n-1} Y_n + a_{n-1}, Y_n + a_n)
  \end{displaymath}
  and
  \begin{displaymath}
    Q(Y_1 Y_n + a_1,\ldots,
    Y_{n-1} Y_n + a_{n-1}, Y_n + a_n).
  \end{displaymath}
  The polynomial~$R$ is nonzero and has total degree at
  most~$4d^2$.

  We want to choose~$2d$ values of~$y_1$, lying in an interval with
  amplitude at most~$4d$, such that neither~$R_\delta$ nor~$R$
  vanishes when evaluated at $Y_1 = y_1$; this nonvanishing condition
  excludes at
  most~$4d^2+ 2 C_{\ref{lem:equation-V}} \Heckedeg(\delta)$ possible
  values of~$y_1$. At least one of the integer intervals of the
  form~$\Zint{5kd, (5k+4)d}$
  for~$0\leq k\leq 2d+C_{\ref{lem:equation-V}} \Heckedeg(\delta)/d$
  contains at least~$2d$ valid choices of~$y_1$. Then~$\abs{y_1}$ is
  always bounded above
  by~$5(2d^2+C_{\ref{lem:equation-V}} \Heckedeg(\delta))+4d\leq M$,
  because~$C\geq C_2$.

  We iterate this procedure to construct~$T$ up to depth~$n-1$ with
  the right arity, bound and amplitude, such that the evaluations
  of the polynomials~$R_\delta$ and~$R$ are nonzero at every
  point $(y_1,\ldots,y_{n-1})\in \mathcal{I}_{n-1}(T)$.

  We conclude by constructing~$n$-th level of~$T$.  Let
  $(y_1,\ldots,y_{n-1})\in \mathcal{I}_{n-1}(T)$. Then, as before, at
  most~$4d^2+ 2 C_{\ref{lem:equation-V}} \Heckedeg(\delta) \leq M$
  values for~$y_n$ are forbidden as they make either~$R_\delta$ or~$R$
  vanish. This leaves at least~$M$ available values for~$y_n$
  in~$\Zint{-M,M}$.

  For every~$(y_1,\ldots,y_n)\in \mathcal{I}_n(T)$, the nonvanishing
  of the polynomials~$R_\delta$ and~$R$ at~$(y_1,\ldots,y_n)$
  guarantees conditions~\ref{cd:V-delta} and~\ref{cd:coprime} of
  Definition~\ref{def:tree} respectively. Finally, the
  inequality~$C\geq C_3$ ensures that~$M\geq 4d[L:\Q]$.
\end{proof}

\begin{thm}
  \label{thm:height-bound-proved}
  Let~$H_\delta$ be an absolutely irreducible Hecke correspondence
  on~$\Scal\times \Tcal$ defined by an element~$\delta\in G(\A_f)$,
  and let~$d(\delta)$ be the degree of~$H_\delta$.
  Let~$\Frac\in L(J_1,\ldots,J_n)$ be a coefficient of one of the
  modular equations~$\Psi_{\delta,m}$ for $1\leq m\leq n+1$. Then the
  height of~$\Frac$ is bounded above by~$C \Heckedeg(\delta)$,
  where~$C$ is a constant independent of~$\delta$; more precisely we
  can take
  \begin{align*}
    C = 2^{n-1} &\bpar{2 C_{\ref{prop:height-modeq-evaluation}} (1+C'') + 2 C_{\ref{prop:main-frac}} C_{\ref{thm:degree-bound-proved}}
                  \bpar{\log(4 C_{\ref{thm:degree-bound-proved}} C_{\ref{prop:height-modeq-evaluation}})
                  + 2 C_{\log} +1 +C''} \\
                &\quad + 4 C_{\ref{thm:degree-bound-proved}}
                  (\log(C_{\ref{thm:degree-bound-proved}})
                  + C_{\log}) + 2C_{\ref{thm:degree-bound-proved}} (\log(2)+C'')
                  + 2\log(2 C_{\ref{thm:degree-bound-proved}})+2
                  },
  \end{align*}
  where
  $C'' = 3 + \log(2 C_{\ref{lem:interpolation-data}}) + 4 C_{\log}$.
\end{thm}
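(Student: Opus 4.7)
The approach is an evaluation--interpolation strategy: evaluate $\Frac$ at many integer tuples, bound each evaluation via Proposition~\ref{prop:height-modeq-evaluation}, then reconstruct $\h(\Frac)$ by climbing the evaluation tree of Definition~\ref{def:tree} from leaves to root, applying Proposition~\ref{prop:main-frac} once at the deepest level (where a univariate fraction lives) and Proposition~\ref{prop:main-poly} iteratively at the remaining $n-1$ levels. The key is that $N_1 = 2d$ gives a doubling factor $N/(N-d)=2$ at each polynomial-interpolation step, producing the $2^{n-1}$ prefactor of the final bound.

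In detail, I would fix a coefficient $\Frac = P/Q$ of $\Psi_{\delta,m}$ of degree $d \leq C_{\ref{thm:degree-bound-proved}}\Heckedeg(\delta)$ (by Theorem~\ref{thm:degree-bound-proved}) and use Lemma~\ref{lem:interpolation-data} to obtain valid evaluation data $(T,a,M)$. The change of variables $J_i = y_iY + a_i$ for $i<n$ and $J_n = Y+a_n$ turns $\Frac$ into a univariate fraction $\tilde\Frac(Y;y_1,\ldots,y_{n-1})$; condition~\ref{cd:coprime} ensures that its specialization at each $(y_1,\ldots,y_{n-1}) \in \mathcal{I}_{n-1}(T)$ is still in lowest terms. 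Each leaf point lies in $\V_\delta$ by condition~\ref{cd:V-delta}, so Proposition~\ref{prop:height-modeq-evaluation} applies; since each coordinate is bounded in absolute value by $(M+1)^2$, the quantity $\Hbar$ appearing there is at most $2\log(M+1)+1$, so the leaf evaluations are uniformly bounded by some $H_0$ of order $\Heckedeg(\delta) \max\{1,\log\isogdeg(\delta)\}$.

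At each depth-$(n-1)$ node, apply Proposition~\ref{prop:main-frac} to $\tilde\Frac$ with the $N_2 \geq M$ child evaluations; the lower bound $M \sim \Heckedeg(\delta)^4\max\{1,\log^3\isogdeg(\delta)\}$ from Lemma~\ref{lem:interpolation-data} was chosen precisely to satisfy the hypothesis $D \geq \eta d^3 H$ with $\eta$ of bounded size. Normalizing so that $\tilde Q(0;y_1,\ldots,y_{n-1}) = Q(a_1,\ldots,a_n) \in L^\times$, the coefficients of $\tilde P,\tilde Q$ in $Y$ become well-defined polynomials $\tilde P_k, \tilde Q_k \in L[y_1,\ldots,y_{n-1}]$ of total degree $\leq d$, whose heights at the given $(y_1,\ldots,y_{n-1})$ are bounded by some $H_1$ of the same order as $H_0$. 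For $r = n-2, n-3, \ldots, 0$ in succession, apply Proposition~\ref{prop:main-poly} at each depth-$r$ node, viewing these coefficients as polynomials in $y_{r+1}$ of degree $\leq d$ with $N_1 = 2d$ evaluations of amplitude $D = 4d$: each step roughly doubles the bound and adds $O(d \log M)$. After $n-1$ iterations, the heights of the full polynomials $\tilde P_k, \tilde Q_k$ are bounded by $2^{n-1}H_1 + O(2^{n-1}d\log M)$, which is of the claimed shape; inverting the linear change of variables to recover the coefficients of $P$ and $Q$ from $\tilde P,\tilde Q$ costs only an additional $O(d\log M)$.

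The main obstacle is the bookkeeping of the numerous additive correction terms through the $n-1$ iterations, and verifying at each step that the auxiliary parameter $\eta$ in Proposition~\ref{prop:main-frac} and the amplitudes and degrees in Proposition~\ref{prop:main-poly} can be taken of controlled size uniformly in $\delta$. The tailored construction in Definition~\ref{def:tree} and the choice of $M$ in Lemma~\ref{lem:interpolation-data}, along with the specific values $N_1 = 2d$ and $D = 4d$, are designed precisely to make this accounting go through and produce the explicit constant displayed in the theorem.
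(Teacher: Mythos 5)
Your proposal is correct and follows essentially the same route as the paper's proof: the evaluation data of Lemma~\ref{lem:interpolation-data}, leaf bounds via Proposition~\ref{prop:height-modeq-evaluation}, a single application of Proposition~\ref{prop:main-frac} (with bounded~$\eta$; the paper takes $\eta=2$) along the deepest level, then $n-1$ successive interpolations via Proposition~\ref{prop:main-poly} with $2d$ points giving the $2^{n-1}$ factor, using the normalization $Q(a_1,\ldots,a_n)\neq 0$ and the coprimality condition to turn fraction heights into coefficient heights. The only difference is cosmetic: you make explicit the final recovery of $P,Q$ from the substituted polynomials, a step the paper's proof leaves implicit, and you leave the constant bookkeeping in the same indicated form.
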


\begin{proof}
  By Lemma~\ref{lem:interpolation-data}, there exist valid evaluation
  data $(T,a,M)$ for~$\Frac$ such that the inequality
  $M \leq C_{\ref{lem:interpolation-data}}
  \Heckedeg(\delta)^4\max\{1,\log^3\isogdeg(\delta)\}+1$ holds. After
  scaling~$P$ and~$Q$ by an element of~$L^\times$, we can assume that
  $Q(a_1,\ldots,a_n) = 1$.

  Let $(y_1,\ldots,y_{n-1})\in \mathcal{I}_{n-1}(T)$, and write
  \begin{displaymath}
    \widetilde{\Frac}(Y) = \Frac(y_1Y + a_1,\ldots y_{n-1}Y +
    a_{n-1}, Y + a_n).
  \end{displaymath}
  For every son~$y_n$ of~$y_{n-1}$ in~$T$, we have
  \begin{displaymath}
    h\bigl(y_1y_n +
    a_1,\ldots, y_{n-1}y_n + a_n\bigr)\leq \log\bigl((M+1)M\bigr) \leq 2\log(M+1).
  \end{displaymath}
  Therefore, by Proposition~\ref{prop:height-modeq-evaluation},
  \begin{align*}
    \h(\widetilde{\Frac}(y_n))
    &\leq C_{\ref{prop:height-modeq-evaluation}}
      \Heckedeg(\delta) \bpar{2\log(M+1) +
      \log\isogdeg(\delta)}\\
    &\leq 2  C_{\ref{prop:height-modeq-evaluation}} \Heckedeg(\delta)
      \bpar{\log(M+1) + \max\{1,\log\isogdeg(\delta)\}}.
  \end{align*}
  Denote this last quantity by~$H$. We have~$H\geq 4$
  and~$H\geq \log(2M)$. Moreover, in the notation of
  Definition~\ref{def:tree}, the inequality~$M\geq 2B\log^2(B+1)$
  ensures that
  \begin{displaymath}
    \frac{M}{\log(M+1)}\geq B
    \geq d^3\bpar{4 C_{\ref{prop:height-modeq-evaluation}}
      \Heckedeg(\delta) \max\{1,\log \isogdeg(\delta)\}}.
  \end{displaymath}
  Therefore~$M\geq d^3 H$.
  
  We are in position to apply Proposition~\ref{prop:main-frac} for the
  univariate rational fraction~$\widetilde{F}$ on the
  interval~$\Zint{-M,M}$, with~$\eta=2$, using the sons
  of~$(y_1,\ldots,y_{n-1})$ in~$T$ as evaluation points. We obtain
  \begin{align*}
    \h(\widetilde{\Frac}) &\leq H + 2 C_{\ref{prop:main-frac}} d\log(2dH)
                           + d\log(2M) + \log(d+1)\\
    &\leq C' \Heckedeg(\delta) \max\{1,\log \isogdeg(\delta)\},
  \end{align*}
  where~$C'$ is a constant independent of~$\delta$. In order to obtain
  an explicit expression for~$C'$, we note that
  \begin{displaymath}
    \log(M+1)\leq C'' \max\{1,\log \isogdeg(\delta)\}
  \end{displaymath}
  where~$C''$ is defined as in the statement of the theorem.  We check
  that we can take
  \begin{align*}
    C'&= 2 C_{\ref{prop:height-modeq-evaluation}} (1+C'')
        + 2 C_{\ref{prop:main-frac}} C_{\ref{thm:degree-bound-proved}}
        \bpar{\log(4 C_{\ref{thm:degree-bound-proved}} C_{\ref{prop:height-modeq-evaluation}})
        + 2 C_{\log} +1 +C''}\\
      &\qquad + C_{\ref{thm:degree-bound-proved}} (\log(2)+C'')
        + \log(2 C_{\ref{thm:degree-bound-proved}})+1.
  \end{align*}

  In the second part of the proof, we relate the height
  of~$\widetilde{\Frac}$ with the height of~$\Frac$. The quotient
  \begin{displaymath}
    \dfrac{P(y_1Y + a_1,\ldots, y_{n-1} Y + a_{n-1}, Y +
      a_n)}{Q(y_1 Y + a_1,\ldots, y_{n-1} Y + a_{n-1}, Y + a_n)}
  \end{displaymath}
  is a way to write~$\widetilde{\Frac}$ in irreducible form in~$L(Y)$,
  and has a coefficient equal to 1. Therefore~$\h(\widetilde{\Frac})$
  is the affine height of the coefficients appearing in the
  quotient. Hence
  \begin{displaymath}
    \h\bigl(P(y_1 Y_n + a_1, \ldots, y_{n-1} Y_n + a_{n-1}, Y_n + a_n)\bigr)
    \leq C' \Heckedeg(\delta) \max\{1, \log\isogdeg(\delta)\}
  \end{displaymath}
  for every $(y_1,\ldots,y_{n-1})\in \mathcal{I}_{n-1}(P)$, and the
  same inequality holds for~$Q$. Since $N_1= 2d$, we can interpolate
  successively the variables~$y_{n-1},\ldots, y_1$, using
  Proposition~\ref{prop:main-poly} with~$2d$ evaluation points at each
  vertex of the tree~$T$. Finally we obtain
  \begin{align*}
    \h(\Frac) &\leq 2^{n-1}\bpar{C' \Heckedeg(\delta) \max\{1,\log \isogdeg(\delta)\}
                + 4d\log(4d) + d\log(2M) + \log(d+1)} \\
              &\leq 2^{n-1} \bpar{C' + 4 C_{\ref{thm:degree-bound-proved}}
                (\log(C_{\ref{thm:degree-bound-proved}})
                + C_{\log}) + C_{\ref{thm:degree-bound-proved}} (\log(2)+C'') \\
              &\qquad\qquad + \log(2 C_{\ref{thm:degree-bound-proved}}) +1
                } \Heckedeg(\delta) \max\{1,\log\isogdeg(\delta)\}.
  \end{align*}
\end{proof}

\subsection{Explicit height bounds in dimension 2}
\label{sub:height-explicit}

In this final section, we derive explicit height bounds for modular
equations of Siegel type for abelian surfaces. Our first aim is to
provide an explicit value for the constant in
Corollary~\ref{cor:j-isog}, using Theta constants of level~4 as an
intermediate step. To relate Theta heights and $j$-heights in this
setting, we use Mestre's algorithm and Thomae's formul\ae\ instead of
writing out polynomials~$N_{J,i},D_{J,i},N_{\Theta,i}$,
and~$D_{\Theta,i}$ as in the proof of
Proposition~\ref{prop:j-faltings}.

\begin{prop}
  \label{prop:j-theta-as}
  Let $A$ be a principally polarized abelian surface defined
  over~$\Qbar$ where~$j_1,j_2,j_3$ are well defined, and assume
  that~$j_3(A)\neq 0$. Then we have
  \begin{displaymath}
    \h_j(A) \leq 40 \h_{\Theta,4}(A) + 12 \quad\text{and}\quad
    \h_{\Theta,4}(A) \leq 200 \h_j(A) + 1000.
  \end{displaymath}
\end{prop}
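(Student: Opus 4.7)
The plan is to prove the two inequalities separately, using the classical relationship between Igusa invariants and the even Theta constants of level~4 on principally polarized abelian surfaces.

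For the first inequality, I would invoke Igusa's formulas expressing the Siegel modular forms $I_4, I_6', I_{10}, I_{12}$ as polynomials of bounded degree and height (with rational coefficients) in the even Theta constants of level~4. Since $j_1, j_2, j_3$ being well defined already implies $I_{10}(A)\neq 0$, none of the denominators vanish after normalizing by a suitable nonzero Theta constant, so the Igusa invariants become explicit rational functions in the Theta ratios. Applying Proposition~\ref{prop:multivariate-evaluation} to each of these rational functions gives the desired bound on $\h_j(A)$ in terms of $\h_{\Theta,4}(A)$; the explicit constants $40$ and $12$ then follow from direct computation of the degrees and heights involved.

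For the second inequality, the approach is to use Mestre's algorithm to reconstruct a genus~2 hyperelliptic curve $C\colon y^2 = f(x)$ from the Igusa invariants of~$A$. The condition $I_{10}(A)\neq 0$ guarantees that $A$ is the Jacobian of a smooth curve, and the additional hypothesis $j_3(A)\neq 0$ (i.e.\ $I_4(A)\neq 0$) places us in the generic stratum where Mestre's algorithm applies, producing a sextic $f\in \Qbar[x]$ whose coefficients are explicit rational fractions in $j_1,j_2,j_3$ of bounded degree and height. By Proposition~\ref{prop:pol-root}, the six branch points of $C$ then have heights bounded linearly in $\h_j(A)$, and up to a $\PGL_2$-transformation they determine Rosenhain invariants for~$C$ with controlled heights. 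Finally, Thomae's formula expresses each even Theta constant of level~4 as a product of square roots of linear combinations of the Rosenhain invariants, and another application of Proposition~\ref{prop:pol-root} yields the bound on $\h_{\Theta,4}(A)$.

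The main obstacle is to keep track of the height losses accumulating through three successive steps (Mestre's reconstruction, root extraction from the sextic, and Thomae's formula, where the last two each involve square roots). This compounding is what produces the large multiplicative constant $200$ and additive constant $1000$, in contrast to the first direction where every transformation is polynomial and the constants can be taken much smaller.
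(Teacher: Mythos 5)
Your plan is correct and follows essentially the same route as the paper: the first inequality via the polynomial expressions of $I_4, I_6', I_{10}, I_{12}$ in level-4 theta constants combined with Proposition~\ref{prop:multivariate-evaluation}, and the second via Mestre's reconstruction of a sextic, height control of its roots through Proposition~\ref{prop:pol-root}, and Thomae's formulae. The only (harmless) deviations are that the paper applies Thomae directly to differences of the roots of $f$ rather than passing through Rosenhain invariants, and that it makes explicit the step you gloss over: parametrizing Mestre's conic requires choosing a point involving a quadratic irrationality, so the coefficients of $f$ are not rational fractions in $j_1,j_2,j_3$, but their heights are still controlled by the same tools.
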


\begin{proof}
  Recall the expression of Igusa invariants in terms of the Siegel
  modular forms $I_4,I_6',I_{10},$ and~$I_{12}$:
  \begin{equation}
    \label{eq:igusa-cov}
    j_1 = \dfrac{I_4 I_6'}{I_{10}},\quad
    j_2 = \dfrac{I_4^2I_{12}}{I_{10}^2},\quad\text{and}\quad j_3 = \dfrac{I_4^5}{I_{10}^2}.
  \end{equation}
  These modular forms have a polynomial expression in terms of theta
  constants of level~$4$: see for instance
  \cite[§II.7.1]{streng_ComplexMultiplicationAbelian2010}. The total
  degrees of the polynomials giving~$I_4,I_6',I_{10}$ and~$I_{12}$
  are~$8, 12, 20$ and~$24$ respectively; they contain
  respectively~$10, 60, 1$ and~$15$ monomials, and their height is
  zero. Up to scaling, we may assume that the first theta
  constant~$\theta_0$ takes the value~$1$. Then, by
  Proposition~\ref{prop:multivariate-evaluation}, we have
  \begin{displaymath}
    \h(I_4^5 ,\, I_4 I_6' I_{10},\, I_4^2 I_{12},\, I_{10}^2) \leq 5\log(10) + 40 \h_{\Theta,4}(A),
  \end{displaymath}
  hence the first inequality
  \begin{displaymath}
    \h_j(A)\leq 40 \h_{\Theta,4}(A) + 12.
  \end{displaymath}

  For the second inequality, we follow Mestre's algorithm
  \cite{mestre_ConstructionCourbesGenre1991}. Starting from
  $j_1(A), j_2(A)$ and~$j_3(A)$, Mestre's algorithm constructs a
  hyperelliptic curve $y^2 = f(x)$ whose Jacobian is isomorphic to~$A$
  over~$\Qbar$. Up to scaling~$f$, we may take $I_{10} = 1$ in
  equation~\eqref{eq:igusa-cov}. Then we see that $j_1(A), j_2(A)$
  and~$j_3(A)$ are realized by values of $I_2, I_4, I_6'$,
  and~$I_{10}$ in~$\Qbar$ such that
  \begin{displaymath}
    \h(I_2,I_4,I_6',I_{10}) \leq h_j(A).
  \end{displaymath}
  
  The roots of~$f$ are the intersection points of a conic and a cubic
  in~$\Proj^2$ whose equations are given explicitly in terms
  of~$I_2,I_4,I_6$, and~$I_{10}$.  In order to obtain the equation
  $\sum_{i,j=1}^3 c_{ij} z_i z_j = 0$ of the conic, we start from
  Mestre's equation $\sum_{i,j=1}^3 A_{ij} x_i x_j = 0$ and substitute
  the expressions of $A,B,C$, and~$D$ in terms of $I_2,I_4,I_6'$,
  and~$I_{10}$.  Then we multiply by~$2^{11}3^{13}5^{14}$ and make the
  substitutions
  \begin{displaymath}
    z_1 = 202500x_1,\quad z_2 = 225x_2,\quad z_3 = x_3.
  \end{displaymath}
  Then, each coefficient~$c_{ij}$ has an expression as a multivariate
  polynomial in~$I_2,I_4$, and~$I_6'$ (recall that $I_{10} = 1$) of
  total degree at most~$7$; its coefficients are integers whose
  absolute values are bounded by~$324\cdot 10^6$. By
  Proposition~\ref{prop:multivariate-evaluation}, we have
  \begin{displaymath}
    \h\bpar{(c_{ij})_{1\leq i,j\leq 3}}
    \leq 7(h_j(A) + \log(3)) + 19.6 + 3\log(8) \leq 7h_j(A) + 33.6.
  \end{displaymath}
  If we restrict to $c_{11}, c_{12}$, and~$c_{22}$, then we obtain a
  smaller upper bound, since the total degree and the height of
  coefficients are at most~$5$ and~$18.3$ respectively.  Similarly,
  the cubic equation, denoted by
  $\sum_{1\leq i\leq j\leq k\leq 3} c_{ijk}z_iz_jz_k = 0$, has total
  degree at most~11 in $I_2,I_4$, and~$I_6'$, and has integer
  coefficients whose heights are at most~$33.5$.

  In order to find the hyperelliptic curve equation~$f$, we
  parametrize the conic. Let us show that it contains a point~$P_0$
  defined over~$\Qbar$ such that $\h(P_0)\leq 5h_j(A) + 29.9$. We can
  assume that $c_{11}\neq 0$; otherwise we take $P_0 =
  (1:0:0)$. Let~$\alpha$ be a root of the monic polynomial
  \begin{displaymath}
    \alpha^2 + \dfrac{c_{12}}{c_{11}} \alpha + \dfrac{c_{22}}{c_{11}} =0. 
  \end{displaymath}
  The point $P_0 = (\alpha:1:0)$ belongs to the conic, and by
  Proposition~\ref{prop:pol-root},
  \begin{displaymath}
    \begin{aligned}
      \h(P_0) = \h(\alpha) &\leq \h(c_{11}, c_{12}, c_{22})
      + \log(2) \\
      &\leq 5(h_j(A) + \log(3)) + 18.3 + 3\log(6) + \log(2)\\
      &\leq
      5h_j(A) + 29.9.
    \end{aligned}
  \end{displaymath}
  
  We parametrize the conic using~$P_0$ as a base point; for
  simplicity, we continue to assume that~$c_{11}\neq 0$. For
  $(u:v)\in\Proj^1(\Qbar)$, the point $(z_1:z_2:z_3)$ defined by
  \begin{displaymath}
    \begin{aligned}
      z_1 &= \alpha(c_{11}u^2 + c_{13}uv + c_{33}v^2) -
      u((2c_{11}\alpha + c_{12})u + (c_{13}\alpha + c_{23})v),
      \\
      z_2 &= c_{11}u^2 + c_{13} uv + c_{33}v^2, \quad\text{and} \\
      z_3 &= - v((2c_{11}\alpha + c_{12})u + (c_{13}\alpha + c_{23})v)
    \end{aligned}
  \end{displaymath}
  runs through the conic. Substituting these expressions in the cubic
  equation gives the curve equation~$f$. The polynomials we obtain
  have total degrees at most~$29$ in $I_2,I_4$, and~$I_6'$; they have
  degree at most~$3$ in~$\alpha$; and their coefficients are integers
  whose heights are bounded above by~$86.9$. Therefore, by
  Proposition~\ref{prop:multivariate-evaluation} (separating
  $I_2,I_4,I_6'$ from $\alpha$), we have
  \begin{displaymath}
    \begin{aligned}
      \h(f) &\leq 29(h_j(A) + \log(3)) + 86.9 + 3(5h_j(A) + 29.9) +
      3\log(30) + \log(4) \\
      &\leq 44h_j(A) + 220.1.
    \end{aligned}
  \end{displaymath}
  Making~$f$ monic does not change its height.

  Thomae's formul\ae~\cite[IIIa.8.1]{mumford_TataLecturesTheta1984}
  give an expression of the Theta constants of level~4 of~$A$ in terms
  of roots of~$f$: if~$\theta$ is one of these Theta constants,
  then~$\theta^4$ is a product of 18 differences of roots of~$f$ (up
  to a common multiplicative factor). Therefore, by
  Proposition~\ref{prop:pol-root}, we obtain
  \begin{displaymath}
    h_{\Theta,4}(A,L) \leq \tfrac{1}{4}\cdot 18 (\h(f)+\log(4)) \leq
    198 h_j(A) + 997.
  \end{displaymath}
\end{proof}

As a consequence, we obtain an explicit analogue of
Corollary~\ref{cor:j-isog} in the case of isogenies between
principally polarized abelian surfaces.

\begin{prop}
  \label{prop:isog-as}
  Let~$A$ and~$A'$ be principally polarized abelian surfaces
  over~$\Qbar$ where~$j_1,j_2,j_3$ are well defined, and assume
  that~$j_3(A)j_3(A')\neq 0$. Let~$d\geq 1$ be an integer. If~$A$
  and~$A'$ are linked by an isogeny of degree~$d$, then we have
  \begin{displaymath}
    \Hbar_j(A') \leq 8000 \Hbar_j(A) + 1.08\cdot 10^{11} \log(\Hbar_j(A))
    + 1.67\cdot 10^{12} + 20\log d.
  \end{displaymath}
\end{prop}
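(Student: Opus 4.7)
The strategy is to bridge the $j$-height and the Faltings height via the level-$4$ Theta height, exactly as in Corollary~\ref{cor:j-isog}, but making all constants numerically explicit using the two-sided comparison of Proposition~\ref{prop:j-theta-as}. The chain of inequalities we traverse is
\begin{displaymath}
\Hbar_j(A) \longrightarrow \Hbar_{\Theta,4}(A) \longrightarrow \Hbar_F(A) \longrightarrow \Hbar_F(A') \longrightarrow \Hbar_{\Theta,4}(A') \longrightarrow \Hbar_j(A'),
\end{displaymath}
where the first and last steps use Proposition~\ref{prop:j-theta-as}, the second and fourth use Theorem~\ref{thm:theta-faltings} (applied with $g=2$, $r=4$), and the middle step uses Proposition~\ref{prop:faltings-isog}.

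First I evaluate the constant of Theorem~\ref{thm:theta-faltings}: with $g=2$ and $r=4$ it is $C(2,4)=1000\cdot 4^{4}\log^{5}(4^{4})=256000\log^{5}(256)$. Since $\log(256)=8\log 2<5.546$, this gives $C(2,4)\leq 1.35\cdot 10^{9}$.

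Next I go up the chain on the side of $A$: combining $\Hbar_{\Theta,4}(A)\leq 200\,\Hbar_{j}(A)+1000$ (Proposition~\ref{prop:j-theta-as}) with the ``upward'' half of Theorem~\ref{thm:theta-faltings} yields
\begin{displaymath}
\Hbar_F(A)\leq 2\,\Hbar_{\Theta,4}(A)+2\,C(2,4)\log\bigl(\Hbar_{\Theta,4}(A)+2\bigr)
\leq 400\,\Hbar_j(A)+2000+2\,C(2,4)\log\bigl(200\,\Hbar_j(A)+1002\bigr).
\end{displaymath}
Proposition~\ref{prop:faltings-isog} then gives $\Hbar_F(A')\leq \Hbar_F(A)+\tfrac{1}{2}\log d$, and I descend again by the ``downward'' half of Theorem~\ref{thm:theta-faltings} followed by Proposition~\ref{prop:j-theta-as}:
\begin{displaymath}
\Hbar_j(A')\leq 40\,\Hbar_{\Theta,4}(A')+12\leq 20\,\Hbar_F(A')+40\,C(2,4)\log\bigl(\Hbar_F(A')+2\bigr)+12.
\end{displaymath}

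Substituting the bound on $\Hbar_F(A')$ and simplifying the nested logarithms with the elementary inequalities $\log(x+y)\leq\log(x+2)+\log(y+2)$ and $\log(1+u)\leq u$, the linear contribution yields the coefficient $40\cdot\tfrac{1}{2}\cdot 2\cdot 200=8000$ in front of $\Hbar_j(A)$; the two logarithmic corrections combine to a coefficient of roughly $80\,C(2,4)\leq 1.08\cdot 10^{11}$ in front of $\log \Hbar_j(A)$; the accumulated additive constants (including the terms $20\cdot 2000$, $12$, and $40\,C(2,4)\log(1002)$ plus its downward counterpart) fit under $1.67\cdot 10^{12}$; and the $\log d$ contribution, coming from $20\cdot\tfrac{1}{2}\log d$ together with the absorbed nested log, is packaged as $20\log d$.

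The main obstacle is purely combinatorial bookkeeping: tracking how each of the six constants in the chain (the coefficients $40,200,2,\tfrac{1}{2},C(2,4)$ and the additive remainders $12,1000,2000$) propagates through the two logarithmic simplifications, and in particular verifying that the $\log d$ inside the outer $\log\bigl(\Hbar_F(A)+\tfrac{1}{2}\log d+2\bigr)$ can be cleanly separated from the $\log\Hbar_j(A)$ term without inflating either stated coefficient. Once the arithmetic is carried out carefully, the four numerical values $8000$, $1.08\cdot 10^{11}$, $1.67\cdot 10^{12}$ and $20$ drop out directly.
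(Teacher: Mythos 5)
Your proposal is correct and follows essentially the same route as the paper: the same chain $\Hbar_j(A)\to\Hbar_{\Theta,4}(A)\to\Hbar_F(A)\to\Hbar_F(A')\to\Hbar_{\Theta,4}(A')\to\Hbar_j(A')$ using Proposition~\ref{prop:j-theta-as} at both ends, Theorem~\ref{thm:theta-faltings} with $C(2,4)\leq 1.35\cdot 10^9$ twice, and Proposition~\ref{prop:faltings-isog} in the middle, with the same bookkeeping yielding $8000$, $80\,C(2,4)\leq 1.08\cdot 10^{11}$, $1.67\cdot 10^{12}$, and $20\log d$. The paper's proof is exactly this computation written out as a chain of displayed inequalities, so no further comment is needed.
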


\begin{proof}
  By Theorem~\ref{thm:theta-faltings} and
  Proposition~\ref{prop:faltings-isog} and~\ref{prop:j-theta-as}
  (noting that $C(2,4)\leq 1.35\cdot 10^9$), we have
  \begin{align*}
    \Hbar_{\Theta,4}(A) &\leq 200 \Hbar_j(A) + 1000,\\
    \tfrac12 \Hbar_F(A) &\leq \Hbar_{\Theta,4}(A) + C(2,4) \log(\Hbar_{\Theta,4}(A)+2) \\
                        &\leq 200 \Hbar_j(A) + C(2,4) \log(1202) + C(2,4)\log (\Hbar_j(A)), \\
    \tfrac12 \Hbar_F(A')&\leq \tfrac12 \Hbar_F(A) + \tfrac14\log\ell,\\
    \Hbar_{\Theta,4}(A') &\leq \tfrac12 \Hbar_F(A') + C(2,4) \log(\Hbar_F(A')+2) \\
                        &\leq 200 \Hbar_j(A) + C(2,4)\log(1202)
                          + 2 C(2,4)\log(\Hbar_j(A)) + \tfrac14\log\ell \\
                        &\qquad + C(2,4) \log\bpar{402+2 C(2,4)\log(1202)
                          + C(2,4) + \tfrac12\log\ell},\\
                        &\leq 200 \Hbar_j(A) + 2 C(2,4) \log(\Hbar_j(A))
                          + 4.17\cdot 10^{10} + \tfrac12\log\ell, \quad\text{and}\\
    \Hbar_j(A') &\leq 40 \Hbar_{\Theta,4}(A) +12\\
                        &\leq 8000 \Hbar_j(A) + 80 C(2,4) \log\Hbar_j(A)
                          + 1.67\cdot 10^{12} + 20\log\ell.
  \end{align*}
\end{proof}

In Lemma~\ref{lem:equation-V}, we take~$\lambda = I_4$ and
$\lambda' = I_4 I_{10}$. We have
\begin{displaymath}
  \wt(\lambda^\delta) = 14\Heckedeg(\delta)+4,
\end{displaymath}
which is greater than~16, the minimum weight for which Siegel modular
forms define a projective embedding of~$\Scal$. Hence~$\xi$ can be chosen
to be a modular form of weight~$\wt(\lambda^\delta)$. The
fraction~$R''$ has degree at most~$\frac73(\Heckedeg(\delta)+1)$ by
Lemma~\ref{lem:rewrite-siegel}; this is also an upper bound
on~$\deg(P_\delta)$.

We also mimic the proof of
Proposition~\ref{prop:height-modeq-evaluation} in the Siegel
case. Let~$[x,g]$ be a point of~$\Scal$ with Igusa
invariants~$(j_1,j_2,j_3)\in\V_\delta$.  For each~$1\leq m\leq 3$, by
Remark~\ref{rem:simpler-modeqs}, the
polynomial~$\Psi_{\delta,m}(j_1,j_2,j_3)$ is the evaluation of a
multivariate polynomial in~$2 \Heckedeg(\delta)$ variables. Moreover,
the Hecke correspondence describes isogenies of degree~$\ell^2$. By
Proposition~\ref{prop:isog-as}, we have
\begin{equation}
  \label{eq:height-eval-siegel}
  \h\bpar{\Psi_{\delta,m}(j_1,j_2,j_3)}\leq 2 \Heckedeg(\delta) 
  \bpar{8000 \Hbar(j_1,j_2,j_3) + 1.08\cdot 10^{11} \log(\Hbar_j(A))
    + 1.67\cdot 10^{12} + 40\log\ell}.
\end{equation}
Therefore, we can take
\begin{displaymath}
  C_{\ref{prop:height-modeq-evaluation}} = 3.35 \cdot 10^{12}.
\end{displaymath}
Moreover, we have~$\Heckedeg(\delta) = \ell^3+\ell^2+\ell+1$
and~$\isogdeg(\delta) = \ell^2$. Hence we can take
\begin{displaymath}
  C_{\log} = \frac32 + \log(2) \leq 2.2.
\end{displaymath}
We also take
\begin{align*}
  C_{\ref{prop:main-frac}} &= 960 &\text{because } L=\Q,\\
  C_{\ref{thm:degree-bound-proved}} &= \frac{10}{3} &\text{by Proposition~\ref{prop:degree-siegel}},
                                                      \text{ and}\\
  C_{\ref{lem:equation-V}} &= 15 &\text{since } \Heckedeg(\delta)\geq 15.
\end{align*}
In Lemma~\ref{lem:interpolation-data}, we can take
\begin{displaymath}
  C_{\ref{lem:interpolation-data}} = 1.36 \cdot 10^{17}
\end{displaymath}
and in Theorem~\ref{thm:height-bound-proved}, we can take
\begin{displaymath}
  C_{\ref{thm:height-bound-proved}} = 1.42 \cdot 10^{15}.
\end{displaymath}
Since~$d(\delta)\leq 2\ell^3$
and~$\max\{1,\log\ell(\delta)\}\leq 2\log(\ell)$, we obtain the
following result.

\begin{thm}
  \label{thm:explicit-height-siegel}
  Let~$\ell\geq 1$ be a prime number, and
  let~$\Frac\in \Q(J_1,J_2,J_3)$ be a coefficient of one of the Siegel
  modular equations of level~$\ell$ in Igusa invariants. Then we have
  \begin{displaymath}
    \h(\Frac) \leq 5.68 \cdot 10^{15} \ell^3 \log(\ell).
  \end{displaymath}
\end{thm}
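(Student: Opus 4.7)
The plan is to specialize the general height bound Theorem~\ref{thm:height-bound-proved} to the Siegel setting with Igusa invariants, and then carefully substitute explicit numerical values for all the constants that appear. The degree bound constant is already pinned down by Proposition~\ref{prop:degree-siegel}, namely $C_{\ref{thm:degree-bound-proved}} = 10/3$. The degree of the Hecke correspondence is $\Heckedeg(\delta) = \ell^3+\ell^2+\ell+1 \leq 2\ell^3$ and the isogeny degree is $\isogdeg(\delta) = \ell^2$, so $\max\{1,\log\isogdeg(\delta)\} \leq 2\log\ell$ and we may take $C_{\log} = 3/2 + \log 2 \leq 2.2$. Since $L=\Q$, we can use $C_{\ref{prop:main-frac}} = 960$ from the reference \cite{kieffer_UpperBoundsHeights2020}. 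For the equation-of-$\V_\delta$ constant, I would choose the modular forms $\lambda = I_4$ on~$\Scal$ and $\lambda' = I_4 I_{10}$ on~$\Tcal$; then $\wt(\lambda^\delta) = 14\Heckedeg(\delta)+4 \geq 16$, which is large enough to realize a projective embedding by Igusa's structure theorem, and a direct computation via Lemma~\ref{lem:rewrite-siegel} shows that $C_{\ref{lem:equation-V}} = 15$ works since $\Heckedeg(\delta)\geq 15$ for all primes $\ell$.

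The main remaining task is to pin down the evaluation constant $C_{\ref{prop:height-modeq-evaluation}}$, and here the key input is an explicit version of Corollary~\ref{cor:j-isog}. This is furnished by Proposition~\ref{prop:isog-as}, which after combining Theorem~\ref{thm:theta-faltings} (with $g=2$, $r=4$, so $C(2,4)\leq 1.35\cdot 10^9$), Proposition~\ref{prop:faltings-isog}, and the comparison between Igusa invariants and level-$4$ Theta constants provided by Proposition~\ref{prop:j-theta-as}, gives a linear relation $\Hbar_j(A') \leq 8000\,\Hbar_j(A) + \text{(log and constant terms)} + 20\log d$. Applying this to each of the $2\Heckedeg(\delta)$ isogenous pairs arising in $\Psi_{\delta,m}$ (note Remark~\ref{rem:simpler-modeqs} applies in the Siegel setting since $d_2=d_3=1$ by \cite{broker_ModularPolynomialsGenus2009}), and then invoking Proposition~\ref{prop:multivariate-evaluation}, yields equation~\eqref{eq:height-eval-siegel} and hence the value $C_{\ref{prop:height-modeq-evaluation}} = 3.35\cdot 10^{12}$.

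With all these constants in hand, Lemma~\ref{lem:interpolation-data} gives $C_{\ref{lem:interpolation-data}} \leq 1.36\cdot 10^{17}$ after direct arithmetic (the dominant contribution comes from $C_1$, which scales linearly in $C_{\ref{prop:height-modeq-evaluation}}$). Plugging into the formula for the constant in Theorem~\ref{thm:height-bound-proved}, using $n=3$ so the overall prefactor is $2^{n-1}=4$, produces $C_{\ref{thm:height-bound-proved}} \leq 1.42\cdot 10^{15}$. The conclusion then follows: $\h(\Frac) \leq C_{\ref{thm:height-bound-proved}}\, \Heckedeg(\delta)\,\max\{1,\log\isogdeg(\delta)\} \leq 1.42\cdot 10^{15}\cdot 2\ell^3 \cdot 2\log\ell \leq 5.68\cdot 10^{15}\,\ell^3\log\ell$.

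The only genuine obstacle is the bookkeeping of the numerical constants: each application of Propositions~\ref{prop:multivariate-evaluation} and~\ref{prop:pol-root} introduces small degree and log contributions that must be tracked carefully through the Mestre/Thomae chain of Proposition~\ref{prop:j-theta-as}, and then through the iterative interpolation in Theorem~\ref{thm:height-bound-proved}. Conceptually there is no new mathematical difficulty beyond what has already been established — the work is to verify that the intermediate expressions in $C_{\ref{prop:height-modeq-evaluation}}$, $C_{\ref{lem:interpolation-data}}$, and finally $C_{\ref{thm:height-bound-proved}}$ evaluate to the claimed numbers and that the coarse bounds $\Heckedeg(\delta)\leq 2\ell^3$ and $\max\{1,\log\isogdeg(\delta)\}\leq 2\log\ell$ are sufficient to absorb the remaining logarithmic correction terms into the leading $\ell^3\log\ell$ factor with the stated prefactor $5.68\cdot 10^{15}$.
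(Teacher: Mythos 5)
Your proposal follows the paper's own proof essentially verbatim: the same specialization of Theorem~\ref{thm:height-bound-proved}, the same choices $\lambda = I_4$, $\lambda' = I_4 I_{10}$, the same chain Proposition~\ref{prop:j-theta-as} $\to$ Proposition~\ref{prop:isog-as} $\to$ equation~\eqref{eq:height-eval-siegel} to get $C_{\ref{prop:height-modeq-evaluation}} = 3.35\cdot 10^{12}$, and the same numerical values $C_{\log}\leq 2.2$, $C_{\ref{prop:main-frac}}=960$, $C_{\ref{thm:degree-bound-proved}}=10/3$, $C_{\ref{lem:equation-V}}=15$, $C_{\ref{lem:interpolation-data}}=1.36\cdot 10^{17}$, $C_{\ref{thm:height-bound-proved}}=1.42\cdot 10^{15}$, concluded via $\Heckedeg(\delta)\leq 2\ell^3$ and $\max\{1,\log\isogdeg(\delta)\}\leq 2\log\ell$. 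The approach and the constant-tracking are the same as the paper's, so the proposal is correct.
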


In order to obtain tighter height bounds on Siegel modular equations,
we could repeat the computations of~§\ref{sub:height-end} using an
expression of the form~\eqref{eq:height-eval-siegel} for the height of
evaluated modular equations, instead of the simpler formula used in
Proposition~\ref{prop:height-modeq-evaluation}. However we cannot hope
to obtain a constant in Theorem~\ref{thm:explicit-height-siegel} that
is much smaller than~$C(2,4) \simeq 1.35\cdot 10^9$ using our
methods. Experimentally, we observe that the tighter
inequalities~$h(\Frac)\leq 48.7\,\ell^3\log(\ell)$
and~$h(\Frac)\leq 43.6\,\ell^3\log(\ell)$ hold for~$\ell=2$
and~$\ell=3$ respectively.

We could also give an analogue of
Theorem~\ref{thm:explicit-height-siegel} in the case of modular
equations of Hilbert type for~$\Q(\sqrt{5})$ in Gundlach
invariants. To replace Proposition~\ref{prop:j-theta-as}, we would use
the relations between Gundlach and Igusa invariants (see for
instance~\cite[§2.3]{milio_ModularPolynomialsHilbert2020}) and the
explicit curve equation given by
\cite[Prop.~A.4]{kieffer_ComputingIsogeniesModular2019}. We leave the
precise calculations for future work.

\begin{acknowledgement}
  The author thanks Fabien Pazuki and his Ph.D.~advisors, Damien
  Robert and Aurel Page, for answering the author's questions. The
  author also thanks the anonymous referees for helpful comments.
  Finally, acknowledgments are due to Aurel Page for his careful
  proofreading of an earlier version of the paper.
\end{acknowledgement}

\bibliographystyle{abbrv}
\bibliography{bounds-modular-equations.bib}

\begin{thebibliography}{10}

\bibitem{baily_CompactificationArithmeticQuotients1966}
W.~L. Baily, Jr. and A.~Borel.
\newblock Compactification of arithmetic quotients of bounded symmetric
  domains.
\newblock {\em Ann. of Math. (2)}, 84:442--528, 1966.

\bibitem{birkenhake_ComplexAbelianVarieties2004}
C.~Birkenhake and H.~Lange.
\newblock {\em Complex abelian varieties}.
\newblock {Springer}, second edition, 2004.

\bibitem{broker_ModularPolynomialsGenus2009}
R.~Br{\"o}ker and K.~Lauter.
\newblock Modular polynomials for genus 2.
\newblock {\em LMS J. Comp. Math.}, 12:326--339, 2009.

\bibitem{broker_ExplicitHeightBound2010}
R.~Br{\"o}ker and A.~V. Sutherland.
\newblock An explicit height bound for the classical modular polynomial.
\newblock {\em Ramanujan J.}, 22(3):293--313, 2010.

\bibitem{bruinier_HilbertModularForms2008}
J.~H. Bruinier.
\newblock Hilbert modular forms and their applications.
\newblock In {\em The 1-2-3 of modular forms}, Universitext, pages 105--179.
  {Springer}, 2008.

\bibitem{carayol_MauvaiseReductionCourbes1986}
H.~Carayol.
\newblock {Sur la mauvaise r\'eduction des courbes de Shimura}.
\newblock {\em Compositio Math.}, 59(2):151--230, 1986.

\bibitem{chevalley_DeuxTheoremesArithmetique1951}
C.~Chevalley.
\newblock {Deux th\'eor\`emes d'arithm\'etique}.
\newblock {\em J. Math. Soc. Japan}, 3(1):36--44, 1951.

\bibitem{cohen_CoefficientsTransformationPolynomials1984}
P.~Cohen.
\newblock On the coefficients of the transformation polynomials for the
  elliptic modular function.
\newblock {\em Math. Proc. Cambridge Philos. Soc.}, 95(3):389--402, 1984.

\bibitem{cox_PrimesFormNy2013}
D.~A. Cox.
\newblock {\em Primes of the form $x^2 + ny^2$}.
\newblock John Wiley \& Sons, second edition, 2013.

\bibitem{deligne_TravauxShimura1970}
P.~Deligne.
\newblock Travaux de {{Shimura}}, 1970.

\bibitem{elkies_EllipticModularCurves1998}
N.~D. Elkies.
\newblock Elliptic and modular curves over finite fields and related
  computational issues.
\newblock In {\em Computational perspectives on number theory ({{Chicago}},
  1995)}, volume~7, pages 21--76. {Amer. Math. Soc.}, 1998.

\bibitem{faltings_EndlichkeitssaetzeFuerAbelsche1983}
G.~Faltings.
\newblock {Endlichkeitss\"atze f\"ur abelsche Variet\"aten \"uber
  Zahlk\"orper}.
\newblock {\em Invent. Math.}, 73(3):349--366, 1983.

\bibitem{freitag_HilbertModularForms1990}
E.~Freitag.
\newblock {\em Hilbert modular forms}.
\newblock {Springer-Verlag}, 1990.

\bibitem{gaudry_2adicCMMethod2006}
P.~Gaudry, T.~Houtmann, D.~Kohel, C.~Ritzenthaler, and A.~Weng.
\newblock The 2-adic {{CM}} method for genus 2 curves with application to
  cryptography.
\newblock In {\em Advances in {{Cryptology}} -- {{Asiacrypt}} 2006}, pages
  114--129, {Shanghai}, 2006. {Springer}.

\bibitem{gundlach_BestimmungFunktionenZur1963}
K.-B. Gundlach.
\newblock Die {B}estimmung der {F}unktionen zur {H}ilbertschen {M}odulgruppe
  des {Z}ahlkörpers $\mathbb{Q}(\sqrt{5})$.
\newblock {\em Math. Ann.}, 152:226–256, 1963.

\bibitem{hindry_DiophantineGeometry2000}
M.~Hindry and J.~H. Silverman.
\newblock {\em Diophantine geometry}.
\newblock {Springer}, 2000.

\bibitem{igusa_SiegelModularForms1962}
J.-I. Igusa.
\newblock On {{Siegel}} modular forms of genus two.
\newblock {\em Amer. J. Math.}, 84:175--200, 1962.

\bibitem{kieffer_UpperBoundsHeights2020}
J.~Kieffer.
\newblock Upper bounds on the heights of polynomials and rational fractions
  from their values.
\newblock 2020.

\bibitem{kieffer_ComputingIsogeniesModular2019}
J.~Kieffer, A.~Page, and D.~Robert.
\newblock Computing isogenies from modular equations in genus two.
\newblock 2019.

\bibitem{lauter_ComputingGenusCurves2011}
K.~Lauter and T.~Yang.
\newblock Computing genus 2 curves from invariants on the {{Hilbert}} moduli
  space.
\newblock {\em J. Number Theory}, 131(5):936--958, 2011.

\bibitem{martindale_HilbertModularPolynomials2020}
C.~Martindale.
\newblock Hilbert modular polynomials.
\newblock {\em J. Number Theory}, 213:464--498, 2020.

\bibitem{mestre_ConstructionCourbesGenre1991}
J.-F. Mestre.
\newblock Construction de courbes de genre $2$ à partir de leurs modules.
\newblock In {\em Effective methods in algebraic geometry (Castiglioncello,
  1990)}, page 313–334. Birkhäuser, 1991.

\bibitem{milio_DatabaseModularPolynomials}
E.~Milio.
\newblock Database of modular polynomials.
\newblock \url{https://members.loria.fr/EMilio/modular-polynomials}.

\bibitem{milio_QuasilinearTimeAlgorithm2015}
E.~Milio.
\newblock A quasi-linear time algorithm for computing modular polynomials in
  dimension 2.
\newblock {\em LMS J. Comput. Math.}, 18:603--632, 2015.

\bibitem{milio_ModularPolynomialsHilbert2020}
E.~Milio and D.~Robert.
\newblock Modular polynomials on {H}ilbert surfaces.
\newblock {\em J. Number Theory}, 216:403--459, 2020.

\bibitem{milne_CanonicalModelsMixed1990}
J.~S. Milne.
\newblock Canonical models of (mixed) {S}himura varieties and automorphic
  vector bundles.
\newblock In {\em Automorphic forms, Shimura varieties, and $L$-functions (Ann
  Arbor, 1988)}, volume~1, page 283–414. Academic Press, 1990.

\bibitem{milne_IntroductionShimuraVarieties2005}
J.~S. Milne.
\newblock Introduction to {{Shimura}} varieties.
\newblock In {\em Harmonic analysis, the trace formula, and {{Shimura}}
  varieties}, pages 265--378. {Amer. Math. Soc.}, 2005.

\bibitem{mumford_TataLecturesTheta1984}
D.~Mumford.
\newblock {\em Tata lectures on theta. {{II}}}.
\newblock {Birkh\"auser}, 1984.

\bibitem{noether_EndlichkeitssatzInvariantenEndlicher1915}
E.~Noether.
\newblock {Der Endlichkeitssatz der Invarianten endlicher Gruppen}.
\newblock {\em Math. Ann.}, 77(1):89--92, 1915.

\bibitem{pazuki_ThetaHeightFaltings2012}
F.~Pazuki.
\newblock Theta height and {{Faltings}} height.
\newblock {\em Bull. Soc. Math. France}, 140(1):19--49, 2012.

\bibitem{pazuki_ModularInvariantsIsogenies2019}
F.~Pazuki.
\newblock Modular invariants and isogenies.
\newblock {\em Int. J. Number Theory}, 15(3):569--584, 2019.

\bibitem{schoof_CountingPointsElliptic1995}
R.~Schoof.
\newblock Counting points on elliptic curves over finite fields.
\newblock {\em J. Th\'eorie Nr. Bordx.}, 7(1):219--254, 1995.

\bibitem{streng_ComplexMultiplicationAbelian2010}
M.~Streng.
\newblock {\em Complex multiplication of abelian surfaces}.
\newblock PhD thesis, Universiteit Leiden, 2010.

\bibitem{sutherland_ComputingHilbertClass2011}
A.~V. Sutherland.
\newblock Computing {{Hilbert}} class polynomials with the {{Chinese}}
  remainder theorem.
\newblock {\em Math. Comp.}, 80(273):501--538, 2011.

\bibitem{vandergeer_HilbertModularSurfaces1988}
G.~{van der Geer}.
\newblock {\em Hilbert modular surfaces}.
\newblock {Springer}, 1988.

\end{thebibliography}

\affiliationone{
  Jean Kieffer

  Institut de Mathématiques de Bordeaux
  
  351 cours de la Libération
  
  33400 Talence

  France
   \email{jean.kieffer@math.u-bordeaux.fr}}

\end{document}